\theoremstyle{plain}
\newtheorem{theorem}{Theorem}[section]
\newtheorem{lemma}[theorem]{Lemma}
\newtheorem{corollary}[theorem]{Corollary}
\newtheorem{proposition}[theorem]{Proposition}
\theoremstyle{definition}
\newtheorem{definition}[theorem]{Definition}
\newtheorem{example}[theorem]{Example}
\newtheorem{question}[theorem]{Question}
\theoremstyle{remark}
\newtheorem*{remark}{Remark}
\newtheorem*{notation}{Notation}
\newcommand{\bR}{\mathbb{R}}
\newcommand{\R}{\bR}
\newcommand{\bQ}{\mathbb{Q}}
\newcommand{\Q}{\bQ}
\newcommand{\cA}{\mathcal{A}}
\newcommand{\cB}{\mathcal{B}}
\newcommand{\cC}{\mathcal{C}}
\newcommand{\cD}{\mathcal{D}}
\newcommand{\cE}{\mathcal{E}}
\newcommand{\cF}{\mathcal{F}}
\newcommand{\cI}{\mathcal{I}}
\newcommand{\I}{\cI}
\newcommand{\cJ}{\mathcal{J}}
\newcommand{\J}{\cJ}
\newcommand{\cK}{\mathcal{K}}
\newcommand{\K}{\cK}
\newcommand{\cL}{\mathcal{L}}
\newcommand{\cM}{\mathcal{M}}
\newcommand{\cN}{\mathcal{N}}
\newcommand{\cP}{\mathcal{P}}
\newcommand{\cS}{\mathcal{S}}
\newcommand{\continuum}{\mathfrak{c}}
\newcommand{\cc}{\continuum}
\newcommand{\bnumber}{\mathfrak{b}}
\newcommand{\bb}{\bnumber}
\newcommand{\bs}{\bnumber_\sigma}
\newcommand{\cf}{\mathrm{cf}} 
\DeclareMathOperator{\add}{add}
\DeclareMathOperator{\non}{non}
\DeclareMathOperator{\adds}{add^\star}
\newcommand{\fin}{\mathrm{Fin}}
\newcommand{\Fin}{\mathrm{Fin}}
\DeclareMathOperator{\chf}{\mathbf{1}}
\DeclareMathOperator{\dom}{dom}
\DeclareMathOperator{\ran}{ran}
\def\arrowfill@@#1#2#3#4{%
  $\m@th\thickmuskip0mu\medmuskip\thickmuskip\thinmuskip\thickmuskip
   \relax#4#1
   \xleaders\hbox{$#4#2$}\hfill
   #3$%
}
\def\dashrightarrowfill@@{\arrowfill@@\relax\relbar\rightarrow}
\def\dotrightarrowfill@@{\arrowfill@@\relax\cdot\rightarrow}
\newcommand{\xdashrightarrow}[2][]{\ext@arrow 0359\dashrightarrowfill@@{#1}{#2}}
\newcommand{\xdotrightarrow}[2][]{\ext@arrow 0359\dotrightarrowfill@@{#1}{#2}}
\begin{document}


\title{Spaces not distinguishing ideal pointwise and $\sigma$-uniform convergence}


\author{Rafa\l{} Filip\'{o}w}
\address[Rafa\l{}~Filip\'{o}w]{Institute of Mathematics\\ Faculty of Mathematics, Physics and Informatics\\ University of Gda\'{n}sk\\ ul.~Wita Stwosza 57\\ 80-308 Gda\'{n}sk\\ Poland}
\email{Rafal.Filipow@ug.edu.pl}
\urladdr{http://mat.ug.edu.pl/~rfilipow}

\author[Adam Kwela]{Adam Kwela}
\address[Adam Kwela]{Institute of Mathematics\\ Faculty of Mathematics\\ Physics and Informatics\\ University of Gda\'{n}sk\\ ul.~Wita  Stwosza 57\\ 80-308 Gda\'{n}sk\\ Poland}
\email{Adam.Kwela@ug.edu.pl}
\urladdr{http://mat.ug.edu.pl/~akwela}


\date{\today}


\subjclass[2010]{Primary: 
54C30, 
40A35, 
03E17.  
Secondary:
40A30, 
26A03, 
54A20, 
03E35. 
}


\keywords{ideal, 
filter, 
ideal convergence, 
statistical convergence,
filter convergence, 
I-convergence,
convergence of a sequence of functions, 
sigma-uniform convergence,
quasi-normal convergence,
pointwise convergence,
bounding number,
QN-spaces}


\begin{abstract}
We examine topological  spaces not distinguishing ideal pointwise and ideal $\sigma$-uniform convergence of sequences of real-valued continuous functions defined on them. 
For instance, we introduce a purely combinatorial cardinal characteristic (a sort of the bounding number $\bnumber$) and prove that it  describes the minimal cardinality of  topological spaces which  distinguish  ideal pointwise and ideal $\sigma$-uniform convergence. Moreover, we provide examples of topological spaces (focusing on subsets of reals) that do or do not distinguish  the considered  convergences. Since similar investigations for ideal quasi-normal convergence instead of ideal $\sigma$-uniform convergence have been performed in literature, we also study spaces not distinguishing ideal quasi-normal and ideal $\sigma$-uniform convergence of sequences of real-valued continuous functions defined on them. 
\end{abstract}


\maketitle

\setcounter{tocdepth}{1}
\tableofcontents


\section{Introduction}

A topological space $X$ is a \emph{QN-space} if it does not distinguish pointwise and quasi-normal convergence of sequences of real-valued continuous functions defined on $X$ (for  the definition of quasi-normal convergence and definitions of other notions used in Introduction see Section \ref{sec:prelim}).
QN-spaces were introduced by Bukovsk\'{y}, Rec\l{}aw and Repick\'{y} \cite{MR1129696} and were  thoroughly examined in the following years \cite{MR2463820,MR2778559,MR2294632,MR1129696,MR1815270,MR1477547,MR1800160,MR2280899,MR2881299}.

A notion of convergence (such as pointwise or quasi-normal convergence of sequences of functions) often can be generalized using ideals on the set of natural numbers. For instance, the ordinary convergence of sequences of reals generalized with the aid of the  ideal of sets of asymptotic density zero is known as the statistical convergence \cite{MR0048548,MR0816582,Steinhaus}.

    It is known \cite[Theorem~5.1]{MR0515120} (see also \cite[Theorem~1.2]{MR1108577}) that  quasi-normal convergence is equivalent to $\sigma$-uniform convergence. Thus, QN-spaces are in fact topological spaces not distinguishing pointwise and $\sigma$-uniform convergence of sequences of real-valued continuous functions defined on them.

The research on ideal analogues of QN-spaces, initiated by Das and Chandra~\cite{MR3038073} and continued by others \cite{MR3622377,MR3784399,MR4336563,MR4336564,MR3924519,MR3423409}, has concentrated only on spaces not distinguishing ideal pointwise and ideal quasi-normal convergence of sequences of continuous functions so far.
However, it is known \cite{MR3624786} that ideal quasi-normal and ideal $\sigma$-uniform convergence are not the same for a large class of ideals. What is more, $\sigma$-uniform convergence seems to be better known than quasi-normal convergence and ideal analogue of $\sigma$-uniform convergence seems more  natural than  ideal analogue of quasi-normal convergence (the latter was even initially introduced in two different ways \cite{MR3038073,MR2899832}). 

It seems that the research on ideal QN-spaces would be incomplete without studying spaces not distinguish ideal pointwise and ideal $\sigma$-uniform convergence of sequences of real-valued continuous functions defined on them. 
Our paper is an attempt to fill this gap, and it is organized in the following way.

In Section~\ref{sec:uniform-convergence}, we show (Corollary~\ref{cor:pointwise-versus-uniform}) that every infinite space distinguishes between ideal uniform convergence and the other considered convergences (i.e.~pointwise, $\sigma$-uniform and quasi-normal).
Moreover, we show (Corollary~\ref{cor:only-one-class}) that a space does not distinguish  ideal pointwise and $\sigma$-uniform convergence if and only if it simultaneously does not 
distinguish ideal pointwise and quasi-normal convergence and does not distinguish ideal quasi-normal and $\sigma$-uniform convergence.

In Section~\ref{sec:sigma-uniform}, we prove the main result of the paper (Corollary~\ref{cor:pointwise-versus-quasinormal:non}) which provides a purely combinatorial characterization of the  
minimal cardinality of a topological space which distinguishes ideal pointwise and ideal $\sigma$-uniform convergence of sequences of continuous functions.

In Section~\ref{sec:properties-of-bsigma}, we examine various properties of combinatorial cardinal characteristics introduced in the preceding section (some of these properties are used in the following sections).

In Section~\ref{sec:spaces-of-arbitrary-cardinality}, 
we show (Corollary~\ref{cor:spaces-of-arbitrary-cardinality-may-distinguis-convergences}) that the property of ``not distinguishing ideal pointwise and $\sigma$-uniform convergence of continuous functions'' is  of the topological nature rather than set-theoretic. We also provide (Theorem~\ref{thm:Sierpinski-set-not-distinguishes-convergence}) under CH an example of an uncountable  subspace of the reals revealing the above phenomenon.

In Section~\ref{sec:b-of-relations}, we show (Theorem~\ref{thm:b-sigma-as-Vojtas-b}) that combinatorial cardinal characteristics introduced in the preceding section can be described in a uniform manner as the bounding numbers of binary relations. These descriptions are crucial for the results obtain in the following section.

In Section~\ref{sec:subsets-of-R-distinguishing}, we construct (Theorem~\ref{thm:subset-of-R-not-distinguishing-convergence}) a subset of the reals of the minimal size  which distinguish the ideal pointwise convergence and $\sigma$-uniform convergence.

Finally in Section~\ref{sec:Distinguishing-between-spaces}, we show (Proposition~\ref{prop:distinguishing-between-spaces-not-distinguishing-convergence}) 
that consistently there exists  a space which does not distinguish ordinary pointwise convergence and ordinary $\sigma$-uniform convergence but it does distinguish statistical pointwise convergence and statistical $\sigma$-uniform convergence.


\section{Preliminaries}
\label{sec:prelim}

By $\omega$ we denote the set of all natural numbers.
We identify  a natural number $n$ with the set $\{0, 1,\dots , n-1\}$. 
We write $A\subseteq^*B$ if $A\setminus B$ is finite.
For a set $A$ and a cardinal number  $\kappa$, we write $[A]^{\kappa} =\{B\subseteq A: |B|=\kappa\}$, where $|B|$ denotes the cardinality of $B$. 

If $A$ and $B$ are two sets then by $A^B$ we denote the family of all functions $f:B\to A$. If $f\in A^B$ and $C\subseteq B$ then $f\restriction C:C\to A$ is the restriction of $f$ to $C$. In the case $B=\omega$, an element of $A^\omega$ will sometimes be denoted $(  a_n) $ -- by this we mean $f:\omega\to A$ given by $f(n)=a_n$ for all $n$.  

 For $A\subseteq X$, we write $\chf_{A}(n)$ to denote the characteristic function of $A$ i.e.~$\chf_A(x)=1$ for $x\in A$ and $\chf_A(x)=0$ for $x\in X\setminus A$.

By $\omega$, $\omega_1$ and $\cc$ we denote the first infinite cardinal, the first uncountable cardinal and the cardinality of $\mathbb{R}$, respectively. By $\cf(\kappa)$ we denote the cofinality of a  cardinal $\kappa$.


\subsection{Ideals}

An \emph{ideal on a set $X$} is a family $\I\subseteq\cP(X)$ that satisfies the following properties:
\begin{enumerate}
\item if $A,B\in \I$ then $A\cup B\in\I$,
\item if $A\subseteq B$ and $B\in\I$ then $A\in\I$,
\item $\I$ contains all finite subsets of $X$,
\item $X\notin\I$.
\end{enumerate}

An ideal $\I$ on $X$ is \emph{tall} if for every infinite $A\subseteq X$ there is an infinite $B\in\I$ such that $B\subset A$. An ideal $\I$ on $X$ is a \emph{P-ideal} if for any countable family $\cA\subseteq\I$ there is $B\in \I$ such that $A\setminus B$ is finite for every $A\in \cA$. 
An ideal $\I$ on $X$ is \emph{countably generated} if there is a countable family $\cB\subseteq \I$ such that for every $A\in \I$ there is $B\in \cB$ with $A\subseteq B$.

The vertical section of a set   $A\subseteq X\times Y$ at a point $x\in X$ is defined 
by
$(A)_x = \{y\in Y : (x,y)\in A\}$. 

For ideals $\I,\J$ on $X$ and $Y$, respectively, we define the following new ideals:
\begin{enumerate}
	\item
	$\I\otimes \J = \{A\subseteq X\times Y: \{x\in X: (A)_x\notin\J\}\in \I\}$,
	\item
$\I\otimes \{\emptyset\} = \{A\subseteq X\times \omega: \{x\in X: (A)_x\ne\emptyset\}\in \I\}$.
	\item
$\{\emptyset\} \otimes  \J= \{A\subseteq \omega\times Y : (A)_x\in \J\text{ for all }x\in X\}$.
\end{enumerate}

The following specific ideals will be considered in the paper (see e.g.~\cite{MR2777744} for these and many more examples).
\begin{example}\
\begin{itemize} 
\item $\fin=\{A\subseteq \omega: |A|<\omega\}$ is the ideal of all finite subsets of $\omega$. It is a non-tall P-ideal.
\item $\fin\otimes\{\emptyset\}$ is an ideal that is not tall and not a P-ideal.
\item $\{\emptyset\}\otimes\fin$ is a non-tall P-ideal.
\item $\fin\otimes\fin$ is a tall non-P-ideal.
\item $\I_{1/n}=\{A\subseteq\omega: \sum_{n\in A}\frac{1}{n+1}<+\infty\}$ is a tall P-ideal called \emph{the summable ideal}.
\item $\I_{d}=\{A\subseteq\omega: \lim_{n\to\infty}\frac{|A\cap n|}{n+1}=0\}$ is a tall P-ideal called the \emph{ideal of sets of asymptotic density zero}.
\item Let $\Omega$ be the set of all clopen subsets of the Cantor space $2^\omega$ having Lebesgue measure $1/2$ (note that $\Omega$ is countable).
Then the \emph{Solecki's ideal}, denoted by $\mathcal{S}$, is the collection of all subsets of $\Omega$ that can be covered by finitely many sets of the form $G_x=\{A\in\Omega: x\in A\}$ for $x\in 2^\omega$. $\mathcal{S}$ is a tall non-P-ideal.
\end{itemize}
\end{example}


\subsection{Ideal convergence}
\label{ConvDef}

Let $\I$ be an ideal on $\omega$. 
A sequence $(a_n)$ of reals is \emph{$\I$-convergent to zero} ($x_n\xrightarrow{\I}0$) if 
$$\{n\in\omega: |x_n|\geq \varepsilon\}\in\I \text{ for each $\varepsilon>0$}.$$
A sequence $(f_n) $ of real-valued functions  defined on $X$ is
\begin{itemize}
\item \emph{$\I$-pointwise convergent to zero} ($f_n\xrightarrow{\text{$\I$-p}}0$) if $f_n(x)\xrightarrow{\I}0$ for all $x\in X$ i.e.
$$\{n\in\omega: |f_n(x)|\geq\varepsilon\}\in\I \text{  for each $x\in X$ and  $\varepsilon>0$;}$$

\item \emph{$\I$-uniformly convergent to zero} ($f_n\xrightarrow{\text{$\I$-u}}0$) if 
$$\{n\in\omega: \exists x\in X\, (|f_n(x)|\geq\varepsilon)\}\in\I \text{  for each $\varepsilon>0$;}$$

\item \emph{$\I$-$\sigma$-uniformly convergent to zero} ($f_n\xrightarrow{\text{$\I$-$\sigma$-u}}0$) if there is a family $\{X_k:k\in \omega\}$ of subsets of $X$ such that 
$$\bigcup_{k\in\omega}X_k=X \text{ and } f_n\restriction X_k\xrightarrow{\text{$\I$-u}}0 \text{ for all $k\in\omega$;}$$
\item \emph{$\I$-quasi-normally convergent to zero} ($f_n\xrightarrow{\text{$\I$-qn}}0$) if there is a sequence  $(\varepsilon_n)$ of positive reals such that 
$$\varepsilon_n\xrightarrow{\I}0 \text{ and } \{n\in\omega: |f_n(x)-f(x)|\geq\varepsilon_n\}\in\I \text{ for every $x\in X$.}$$
\end{itemize}


\subsection{Spaces not distinguishing convergence}

For a topological space $X$, we write $\cC(X)$ to denote the family of all real-valued continuous functions defined on $X$.
Recall that a topological space $X$ is called a \emph{normal space} (or $T_4$-space) if $X$ is a Hausdorff space  and for every pair of disjoint closed subsets $A,B\subseteq  X$ there exist open sets $U,V$ such that $A\subseteq U$, $B \subseteq V$ and $U\cap V = \emptyset$.

\begin{definition}
Let $\alpha$ and $\beta$ be some notions of convergences of sequences of real-valued functions (for instance, pointwise, uniform, quasi-normal  or $\sigma$-uniform).
We write $f_n\xrightarrow{\alpha}0$ 
if  $(f_n)$ convergence to the constant zero function with respect to the notion $\alpha$.

\begin{enumerate}
    \item By $(\alpha,\beta)$ we denote the class of all  normal spaces  not distinguishing between $\alpha$ and $\beta$ convergences in $\cC(X)$ i.e.~a space  $X\in (\alpha,\beta)$ if and only if it is normal and 
    $$f_n\xrightarrow{\alpha}0 \iff f_n\xrightarrow{\beta}0
    \text{ for every sequence $(  f_n) $ in $\cC(X)$}.$$

\item By $\non(\alpha,\beta)$ we denote the smallest cardinality of a normal  space   which 
distinguishes between $\alpha$ and $\beta$ convergences in $\cC(X)$:
$$\non(\alpha,\beta) = \min\left(\{|X|:X\text{ is normal and } X\notin(\alpha,\beta)\}\cup\{\infty\}\right).$$
\end{enumerate}
\end{definition}

For instance, we write   $X \in (\text{$\I$-p,$\I$-u})$ if $X$ is normal and 
$$f_n\xrightarrow{\text{$\I$-p}}0 \iff f_n\xrightarrow{\text{$\I$-u}}0$$ 
for any sequence $(  f_n) $ of continuous real-valued functions defined on $X$.


\section{Spaces not distinguishing uniform convergence}
\label{sec:uniform-convergence}

\begin{proposition}
\label{prop:easy-implications-one-ideal}
Let $\I,\J$ be  ideals on $\omega$.
Let $X$ be a nonempty  topological space. 
Let  $(f_n)$ be a sequence in  $\cC(X)$.
\begin{enumerate}
    \item 
    $
f_n\xrightarrow{\text{$\I$-u}}0 
\implies 
f_n\xrightarrow{\text{$\I$-$\sigma$-u}}0
\implies 
f_n\xrightarrow{\text{$\I$-qn}}0
\implies 
f_n\xrightarrow{\text{$\I$-p}}0
.$\label{prop:easy-implications-one-ideal:item}

\item If $\I\subseteq\J$, then \label{prop:easy-implications-one-ideal:item-inclusion}

\begin{enumerate}
\item $f_n\xrightarrow{\text{$\I$-u}}0 \implies f_n\xrightarrow{\text{$\J$-u}}0$, 
\item $f_n\xrightarrow{\text{$\I$-$\sigma$-u}}0 \implies f_n\xrightarrow{\text{$\J$-$\sigma$-u}}0$, 
\item $f_n\xrightarrow{\text{$\I$-qn}}0 \implies f_n\xrightarrow{\text{$\J$-qn}}0$, 
\item $f_n\xrightarrow{\text{$\I$-p}}0 \implies f_n\xrightarrow{\text{$\J$-p}}0$.
\end{enumerate}

\end{enumerate}

\end{proposition}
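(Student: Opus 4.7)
The proposition is really two independent parts. Part~(2) is immediate in all four sub-implications. Under the hypothesis $\I\subseteq\J$, any set lying in $\I$ also lies in $\J$, and each of the four convergence notions is defined by a requirement that certain sets belong to the ambient ideal; the same cover (in the $\sigma$-u case) and the same witnessing sequence $(\varepsilon_n)$ (in the qn case) transfer from $\I$ to $\J$ without modification.

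For part~(1), the implication $f_n\xrightarrow{\text{$\I$-u}}0\implies f_n\xrightarrow{\text{$\I$-$\sigma$-u}}0$ is witnessed by the trivial cover $X_k=X$ for all $k$. The implication $f_n\xrightarrow{\text{$\I$-qn}}0\implies f_n\xrightarrow{\text{$\I$-p}}0$ follows from the set inclusion
$$\{n:|f_n(x)|\geq\delta\}\subseteq\{n:\varepsilon_n\geq\delta\}\cup\{n:|f_n(x)|\geq\varepsilon_n\},$$
valid for every $x\in X$, every $\delta>0$, and every sequence $(\varepsilon_n)$ witnessing $\I$-quasi-normal convergence; both sets on the right belong to $\I$.

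The heart of the argument is the remaining implication $f_n\xrightarrow{\text{$\I$-$\sigma$-u}}0\implies f_n\xrightarrow{\text{$\I$-qn}}0$. Given a cover $\{X_k:k\in\omega\}$ witnessing the $\sigma$-uniform convergence, I would first replace $X_k$ by $X_0\cup\cdots\cup X_k$ to assume it is increasing, and then set $\phi_k(n)=\sup_{x\in X_k}|f_n(x)|$. The $\I$-uniform convergence on $X_k$ yields $C_k:=\{n:\phi_k(n)\geq 1/(k+1)^2\}\in\I$ for every $k$; set $D_k=C_0\cup\cdots\cup C_k\in\I$ and let $k(n)$ denote the largest $k\geq -1$ with $n\notin D_k$ (where $D_{-1}=\emptyset$). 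Define $\varepsilon_n=1/(k(n)+1)$ when $k(n)$ is finite, and $\varepsilon_n=1/(n+1)$ when $k(n)=\infty$. A direct computation gives $\{n:\varepsilon_n\geq 1/m\}\subseteq D_{m-1}\cup\{0,\dots,m-1\}\in\I$, so $\varepsilon_n\xrightarrow{\I}0$; and for $x\in X_j$ with $n\notin D_j$, a short case split on whether $k(n)$ is finite shows $|f_n(x)|\leq\phi_j(n)<\varepsilon_n$, so $\{n:|f_n(x)|\geq\varepsilon_n\}\subseteq D_j\in\I$.

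I expect the only real obstacle to be this diagonal construction. The key trick is the quadratic threshold $1/(k+1)^2$ in the definition of $C_k$, which creates the strict gap $\phi_j(n)<1/(k+1)^2<1/(k+1)=\varepsilon_n$ whenever $k(n)$ is finite. The subcase $k(n)=\infty$ is automatic: it forces $\phi_j(n)\leq\phi_k(n)<1/(k+1)^2$ for every $k\geq j$, hence $\phi_j(n)=0$ and any positive $\varepsilon_n$ works there.
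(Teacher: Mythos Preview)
Your argument is correct in substance. The paper does not prove part~(1) directly: it calls the first implication obvious, cites \cite{MR3038073} for $\I$-$\sigma$-u $\Rightarrow$ $\I$-qn and \cite{MR3179991} for $\I$-qn $\Rightarrow$ $\I$-p, and dismisses part~(2) as ``straightforward''. You have instead supplied a self-contained diagonal construction for the middle implication, which is the standard route and has the virtue of not depending on external references.

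Two cosmetic repairs are needed in your construction. First, when $n\in D_0$ your definition yields $k(n)=-1$ and hence $\varepsilon_n=1/0$; just set $\varepsilon_n=1$ on $D_0$, which is harmless since $D_0\in\I$. Second, the inclusion $\{n:\varepsilon_n\ge 1/m\}\subseteq D_{m-1}\cup\{0,\dots,m-1\}$ is off by one in the first summand: the condition $k(n)\le m-1$ gives $n\in D_{k(n)+1}\subseteq D_m$, not $D_{m-1}$; the conclusion is unaffected because $D_m\in\I$ just the same. As an aside, the quadratic threshold $1/(k+1)^2$ is stronger than you need --- taking $C_k=\{n:\phi_k(n)\ge 1/(k+1)\}$ already yields $\phi_j(n)\le\phi_{k(n)}(n)<1/(k(n)+1)=\varepsilon_n$ directly, so the ``key trick'' you single out is not in fact essential.
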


\begin{proof}
(\ref{prop:easy-implications-one-ideal:item}) The first implication is obvious, the second is proved in \cite[Theorem~2.1 along with Note~2.1]{MR3038073}, whereas the third one is shown in \cite[Proposition 4.4]{MR3179991}.

(\ref{prop:easy-implications-one-ideal:item-inclusion}) 
Straightforward.
\end{proof}

\begin{proposition}
\label{prop:uniform-implies-pointwise}
\label{prop:uniform-implies-sigma-uniform}
\label{prop:sigma-uniform-implies-pointwise}
\label{prop:uniform-implies-quasinormal}
\label{prop:quasinormal-implies-pointwise}
\label{prop:sigma-uniform-implies-quasinormal}\label{prop:uniform-implies-pointwise:item}\label{prop:sigma-uniform-implies-pointwise:item}\label{prop:uniform-implies-quasinormal:item}\label{prop:quasinormal-implies-pointwise:item}\label{prop:sigma-uniform-implies-quasinormal:item}
    Let $\I$ and $\J$ be ideals on $\omega$.
Let $X$ be a nonempty topological space. The following conditions are equivalent.
\begin{enumerate}

    \item $\I\subseteq\J$.\label{prop:sigma-uniform-implies-quasinormal:ideals}\label{prop:uniform-implies-pointwise:ideals}\label{prop:uniform-implies-sigma-uniform:ideals}\label{prop:sigma-uniform-implies-pointwise:ideals}\label{prop:uniform-implies-quasinormal:ideals}\label{prop:quasinormal-implies-pointwise:ideals}

        \item $f_n\xrightarrow{\text{$\I$-u}}0 \implies f_n\xrightarrow{\text{$\J$-$\sigma$-u}}0$
    for every  sequence $(  f_n) $ in $\cC(X)$.\label{prop:uniform-implies-sigma-uniform:functions}

    \item $f_n\xrightarrow{\text{$\I$-u}}0 \implies f_n\xrightarrow{\text{$\J$-qn}}0$
    for every sequence $(  f_n) $ in $\cC(X)$.\label{prop:uniform-implies-quasinormal:functions}

    \item $f_n\xrightarrow{\text{$\I$-$\sigma$-u}}0 \implies f_n\xrightarrow{\text{$\J$-qn}}0$
    for every  sequence $(  f_n) $ in $\cC(X)$.\label{prop:sigma-uniform-implies-quasinormal:functions}

    \item $f_n\xrightarrow{\text{$\I$-$\sigma$-u}}0 \implies f_n\xrightarrow{\text{$\J$-p}}0$
    for every sequence $(  f_n) $ in $\cC(X)$.\label{prop:sigma-uniform-implies-pointwise:functions}

    \item $f_n\xrightarrow{\text{$\I$-u}}0 \implies f_n\xrightarrow{\text{$\J$-p}}0$
    for every sequence $(  f_n) $ in $\cC(X)$.\label{prop:uniform-implies-pointwise:functions}
    
    \item $f_n\xrightarrow{\text{$\I$-qn}}0 \implies f_n\xrightarrow{\text{$\J$-p}}0$
    for every  sequence $(  f_n) $ in $\cC(X)$.\label{prop:quasinormal-implies-pointwise:functions}

\end{enumerate}
The above characterizations are presented graphically on  Figure~\ref{fig:diag-1}.
\end{proposition}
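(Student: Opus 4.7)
The plan is to close the seven-way equivalence with one easy ``downward'' step and one universal counterexample. For ``(\ref{prop:uniform-implies-pointwise:ideals})$\Rightarrow$everything else'', I simply combine the two parts of Proposition~\ref{prop:easy-implications-one-ideal}: if $\I\subseteq\J$ and $(f_n)$ converges to $0$ in any of the modes $\text{$\I$-u}$, $\text{$\I$-$\sigma$-u}$, $\text{$\I$-qn}$, then part~(\ref{prop:easy-implications-one-ideal:item}) descends this convergence along the chain $\text{u}\Rightarrow\text{$\sigma$-u}\Rightarrow\text{qn}\Rightarrow\text{p}$ while still using the ideal $\I$, and part~(\ref{prop:easy-implications-one-ideal:item-inclusion}) then transports each of these four modes from $\I$ to $\J$. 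Reading off the appropriate step yields all six implications~(\ref{prop:uniform-implies-sigma-uniform:functions})--(\ref{prop:quasinormal-implies-pointwise:functions}) simultaneously.

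For every reverse direction I would produce a single sequence that simultaneously witnesses the failure of all of~(\ref{prop:uniform-implies-sigma-uniform:functions})--(\ref{prop:quasinormal-implies-pointwise:functions}) whenever $\I\not\subseteq\J$. Pick $A\in\I\setminus\J$, fix any point $x_0\in X$, and define $f_n\colon X\to\R$ to be the constant function with value $\chf_A(n)$; each $f_n$ is globally either $0$ or $1$ on $X$, hence continuous. For every $\varepsilon>0$ the set $\{n:\exists x\in X\;|f_n(x)|\geq\varepsilon\}$ equals either $A$ or $\emptyset$, both in $\I$, so $f_n\xrightarrow{\text{$\I$-u}}0$. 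By Proposition~\ref{prop:easy-implications-one-ideal}(\ref{prop:easy-implications-one-ideal:item}) applied to $\I$ this upgrades the sequence to also satisfy $f_n\xrightarrow{\text{$\I$-$\sigma$-u}}0$ and $f_n\xrightarrow{\text{$\I$-qn}}0$. On the other hand $\{n:|f_n(x_0)|\geq 1\}=A\notin\J$, so $f_n\not\xrightarrow{\text{$\J$-p}}0$, and the contrapositive of Proposition~\ref{prop:easy-implications-one-ideal}(\ref{prop:easy-implications-one-ideal:item}) applied to $\J$ forces also $f_n\not\xrightarrow{\text{$\J$-$\sigma$-u}}0$ and $f_n\not\xrightarrow{\text{$\J$-qn}}0$. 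Hence in each of the six implications~(\ref{prop:uniform-implies-sigma-uniform:functions})--(\ref{prop:quasinormal-implies-pointwise:functions}) the antecedent holds while the conclusion fails for this $(f_n)$, establishing the contrapositive of ``(k)$\Rightarrow$(\ref{prop:uniform-implies-pointwise:ideals})'' uniformly in $k$.

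There is no genuine obstacle here; the only observation that needs to be highlighted is that a single \emph{constant-per-index} sequence built from any set $A\in\I\setminus\J$ separates $\I$ from $\J$ at all four convergence levels at once, which is exactly what the monotonicity in Proposition~\ref{prop:easy-implications-one-ideal}(\ref{prop:easy-implications-one-ideal:item}) guarantees. Nonemptiness of $X$ is used only to supply the test point $x_0$; no further topological assumption (in particular, normality) is needed in the proof.
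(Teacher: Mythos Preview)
Your proof is correct and follows essentially the same approach as the paper. Both arguments derive the forward implications from Proposition~\ref{prop:easy-implications-one-ideal} and settle the reverse direction via the constant-per-index sequence $f_n\equiv\chf_A(n)$; the only cosmetic difference is that you argue by contrapositive (choose $A\in\I\setminus\J$ and exhibit a failing sequence), whereas the paper argues directly (assume the implication, take arbitrary $A\in\I$, and deduce $A\in\J$).
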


\begin{figure}[h]
    \centering
\begin{tikzcd}[row sep=7em,column sep=7em]
\text{$\I$-p}  \arrow[r,leftarrow, "\I\supseteq\cL"] & 
\text{$\cL$-u} \arrow[d,"\K\supseteq \cL"]   \arrow[ld,sloped,near start,"\J\supseteq\cL"]\\
\text{$\J$-qn} \arrow[u, "\I\supseteq\J"] \arrow[r, leftarrow,"\J\supseteq \K"]& 
\text{$\K$-$\sigma$-u}   \arrow[lu,sloped, near start, "\I\supseteq \K"]
\end{tikzcd}
\caption{Diagram for Proposition~\ref{prop:uniform-implies-pointwise}, where 
``$\text{$\I$-p}\xleftarrow{\I\supseteq\cL} \text{$\cL$-u}$'' is a counterpart of the equivalence ``$(\ref{prop:uniform-implies-pointwise:ideals})\iff (\ref{prop:uniform-implies-pointwise:functions})$'', and similarly for other arrows.}
    \label{fig:diag-1}
\end{figure}
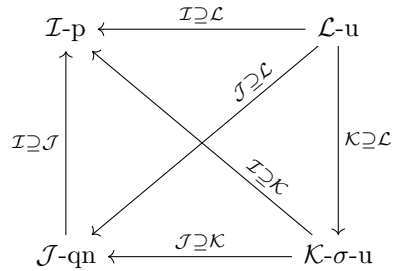

\begin{proof}
First, we see that it is enough to prove the following chains of implications:
\begin{itemize}
    \item 
    $
    (\ref{prop:sigma-uniform-implies-quasinormal:ideals})
    \implies 
    (\ref{prop:uniform-implies-sigma-uniform:functions}) 
    \implies 
    (\ref{prop:uniform-implies-quasinormal:functions})
    \implies 
    (\ref{prop:uniform-implies-pointwise:functions})
    \implies 
    (\ref{prop:sigma-uniform-implies-quasinormal:ideals})
    $,

    \item 
    $
    (\ref{prop:sigma-uniform-implies-quasinormal:ideals})
    \implies 
    (\ref{prop:sigma-uniform-implies-quasinormal:functions}) 
    \implies 
(\ref{prop:sigma-uniform-implies-pointwise:functions})
\implies 
    (\ref{prop:uniform-implies-pointwise:functions})
\implies 
    (\ref{prop:sigma-uniform-implies-quasinormal:ideals})
    $,

    \item 
    $
    (\ref{prop:sigma-uniform-implies-quasinormal:ideals})
    \implies 
    (\ref{prop:quasinormal-implies-pointwise:functions})
    \implies 
    (\ref{prop:sigma-uniform-implies-quasinormal:ideals})
$.
\end{itemize}

Second, we observe  that the following implications easily follow from Proposition~\ref{prop:easy-implications-one-ideal}:
\begin{itemize}
    \item 
$(\ref{prop:sigma-uniform-implies-quasinormal:ideals})
\implies 
(\ref{prop:uniform-implies-sigma-uniform:functions})$, 
$(\ref{prop:uniform-implies-sigma-uniform:functions}) 
\implies 
(\ref{prop:uniform-implies-quasinormal:functions})$, 
$(\ref{prop:uniform-implies-quasinormal:functions})
\implies 
(\ref{prop:uniform-implies-pointwise:functions})$,

\item 
$(\ref{prop:sigma-uniform-implies-quasinormal:ideals})
\implies 
(\ref{prop:sigma-uniform-implies-quasinormal:functions})$, 
$(\ref{prop:sigma-uniform-implies-quasinormal:functions}) 
\implies 
(\ref{prop:sigma-uniform-implies-pointwise:functions})$,
$(\ref{prop:sigma-uniform-implies-pointwise:functions})
\implies 
(\ref{prop:uniform-implies-pointwise:functions})$, 

\item 
$(\ref{prop:sigma-uniform-implies-quasinormal:ideals})
\implies 
(\ref{prop:quasinormal-implies-pointwise:functions})$.
\end{itemize}

Third,  we prove the remaining two implications:
$(\ref{prop:uniform-implies-pointwise:functions})
\implies 
(\ref{prop:sigma-uniform-implies-quasinormal:ideals})$
and 
$(\ref{prop:quasinormal-implies-pointwise:functions})
\implies 
(\ref{prop:sigma-uniform-implies-quasinormal:ideals})$ simultaneously. 
Let $A\in \I$.
We define $f_n:X\to\R$ by $f_n(x)=\chf_{A}(n)$ for every $x\in X$. Then $f_n$ are constant so continuous.
Since 
$f_n\xrightarrow{\text{$\I$-u}}0$ and $f_n\xrightarrow{\text{$\I$-qn}}0$, both (\ref{prop:uniform-implies-pointwise:functions}) and 
(\ref{prop:quasinormal-implies-pointwise:functions})
imply that 
 $f_n\xrightarrow{\text{$\J$-p}}0$.
Take any  $x_0\in X$. Then $A = \{n\in \omega: |f_n(x_0)|>1/2\}\in \J$.
\end{proof}

\begin{proposition}
\label{prop:sigma-uniform-implies-uniform}
\label{prop:pointwise-implies-uniform}
\label{prop:quasinormal-implies-uniform}
    Let $\I$ and $\J$ be ideals on $\omega$.
Let $X$ be a nonempty normal space.
 The following conditions are equivalent.
\begin{enumerate}
    \item $|X|<\omega$ and $\I\subseteq \J$.\label{prop:sigma-uniform-implies-uniform:ideals}\label{prop:pointwise-implies-uniform:ideals}\label{prop:quasinormal-implies-uniform:ideals}

    \item $f_n\xrightarrow{\text{$\I$-p}}0 \implies f_n\xrightarrow{\text{$\J$-u}}0$
    for every sequence $(  f_n) $ in $\cC(X)$.\label{prop:pointwise-implies-uniform:functions}

    \item $f_n\xrightarrow{\text{$\I$-qn}}0 \implies f_n\xrightarrow{\text{$\J$-u}}0$
    for every  sequence $(  f_n) $ in $\cC(X)$.\label{prop:quasinormal-implies-uniform:functions}

    \item $f_n\xrightarrow{\text{$\I$-$\sigma$-u}}0 \implies f_n\xrightarrow{\text{$\J$-u}}0$
    for every sequence $(  f_n) $ in $\cC(X)$.\label{prop:sigma-uniform-implies-uniform:functions}
\end{enumerate}

The above characterizations are presented graphically on  Figure~\ref{fig:diag-2}.
\end{proposition}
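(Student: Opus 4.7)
I plan to establish the cycle $(1)\Rightarrow(2)\Rightarrow(3)\Rightarrow(4)\Rightarrow(1)$. The middle two implications are immediate consequences of Proposition~\ref{prop:easy-implications-one-ideal}: since $\I$-$\sigma$-u convergence implies $\I$-qn convergence, which in turn implies $\I$-p convergence, the hypothesis of (2) is the weakest of the three and that of (4) the strongest, so the implication (2) is formally stronger than (3), which is formally stronger than (4). Thus only $(1)\Rightarrow(2)$ and the converse $(4)\Rightarrow(1)$ require genuine work.

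For $(1)\Rightarrow(2)$, write $X=\{x_1,\dots,x_k\}$. Given $(f_n)$ in $\cC(X)$ with $f_n\xrightarrow{\text{$\I$-p}}0$, for each $\varepsilon>0$ the set $\{n:\exists x\in X\,(|f_n(x)|\geq\varepsilon)\}$ equals the finite union $\bigcup_{i=1}^{k}\{n:|f_n(x_i)|\geq\varepsilon\}$ of members of $\I\subseteq\J$, so it lies in $\J$, yielding $f_n\xrightarrow{\text{$\J$-u}}0$.

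For the main implication $(4)\Rightarrow(1)$, I would derive $\I\subseteq\J$ and $|X|<\omega$ separately. To see that $\I\subseteq\J$, take $A\in\I$ and consider the constant sequence $f_n\equiv\chf_A(n)$, which lies in $\cC(X)$ and converges $\I$-uniformly (hence $\I$-$\sigma$-uniformly) to $0$; applying (4) gives $f_n\xrightarrow{\text{$\J$-u}}0$, and evaluating at any $x_0\in X$ yields $A\in\J$. To see that $|X|<\omega$, argue by contradiction: if $X$ is infinite then, being Hausdorff (by normality), a standard inductive separation argument produces a sequence $(x_n)$ of pairwise distinct points together with pairwise disjoint open neighborhoods $U_n\ni x_n$. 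Urysohn's lemma, available by normality, then provides continuous $f_n\colon X\to[0,1]$ with $f_n(x_n)=1$ and $f_n\equiv 0$ on $X\setminus U_n$. Setting $X_0:=X\setminus\bigcup_{m\in\omega}U_m$ and $X_{k+1}:=U_k$ for $k\in\omega$ yields a cover $X=\bigcup_{k\in\omega}X_k$ on each piece of which the tail of $(f_n)$ is identically zero (because the $U_m$ are pairwise disjoint). Hence $f_n\xrightarrow{\text{$\Fin$-$\sigma$-u}}0$, and in particular $f_n\xrightarrow{\text{$\I$-$\sigma$-u}}0$; but $\{n:\exists x\,(|f_n(x)|\geq 1/2)\}=\omega\notin\J$, contradicting~(4).

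I expect the most delicate step to be producing the sequence of pairwise disjoint open neighborhoods of distinct points in an arbitrary infinite Hausdorff space. While folklore, its verification naturally splits into the case of infinitely many isolated points (trivial, using their singleton open sets) and the case in which there is a non-isolated point (whose open neighborhoods are automatically infinite, making the inductive Hausdorff separation step sustainable). In the final write-up I would either cite this topological fact or dispatch it with a brief direct argument.
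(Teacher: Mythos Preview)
Your proposal is correct and follows essentially the same argument as the paper: the same cycle $(1)\Rightarrow(2)\Rightarrow(3)\Rightarrow(4)\Rightarrow(1)$, the same finite-union argument for $(1)\Rightarrow(2)$, and for $(4)\Rightarrow(1)$ the same use of constant functions $f_n\equiv\chf_A(n)$ to get $\I\subseteq\J$ together with the same Urysohn-based construction (with the identical cover $X_0=X\setminus\bigcup_m U_m$, $X_{k+1}=U_k$) to force $|X|<\omega$. The paper simply cites the folklore fact about infinitely many pairwise disjoint nonempty open sets in an infinite Hausdorff space rather than sketching it as you do.
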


\begin{figure}[h]
\centering
\begin{tikzcd}[row sep=7em,column sep=7em]
\text{$\I$-p}  \arrow[r, "|X|<\omega \  \land \  \I\subseteq\cL"] & 
\text{$\cL$-u}  \\
\text{$\J$-qn}  \arrow[ru,sloped,"|X|<\omega\ \land\ \J\subseteq\cL"]& 
\text{$\K$-$\sigma$-u} \arrow[u,swap,"|X|<\omega \ \land \ \K\subseteq \cL"] 
\end{tikzcd}
\caption{Diagram for Proposition~\ref{prop:pointwise-implies-uniform}, where 
``$\text{$\I$-p}\xrightarrow{|X|<\omega  \land \I\subseteq\cL} \text{$\cL$-u}$'' is a counterpart of  the equivalence ``$(\ref{prop:pointwise-implies-uniform:ideals})\iff (\ref{prop:pointwise-implies-uniform:functions})$'',  and similarly for other arrows.}
    \label{fig:diag-2}
\end{figure}

\begin{proof}
    $(\ref{prop:pointwise-implies-uniform:ideals})\implies (\ref{prop:pointwise-implies-uniform:functions})$
    Let $(  f_n) $ be a sequence in $\cC(X)$ such that $f_n\xrightarrow{\text{$\I$-p}}0$.
    Let $\varepsilon>0$.
    For every $x\in X$, $A_x=\{n\in \omega:|f_n(x)|>\varepsilon\}\in \I$.
    Since $X$ is finite and $\I\subseteq\J$, $A=\bigcup\{A_x:x\in X\}\in \J$.
    But $\{n\in \omega: \exists x\in X\, (|f_n(x)|>\varepsilon)\} =A$, so  $f_n\xrightarrow{\text{$\J$-u}}0$.

    $(\ref{prop:pointwise-implies-uniform:functions})\implies(\ref{prop:quasinormal-implies-uniform:functions})$
It easily follows from Proposition~\ref{prop:easy-implications-one-ideal}.

$(\ref{prop:quasinormal-implies-uniform:functions})\implies (\ref{prop:sigma-uniform-implies-uniform:functions})$
It easily follows from Proposition~\ref{prop:easy-implications-one-ideal}.

$(\ref{prop:sigma-uniform-implies-uniform:functions})\implies (\ref{prop:sigma-uniform-implies-uniform:ideals})$
First, we show that $\I\subseteq\J$.
Let $A\in \I$.
We define $f_n:X\to\R$ by $f_n(x)=\chf_{A}(n)$ for every $x\in X$. Then $f_n$ are constant so continuous and  
$f_n\xrightarrow{\text{$\I$-$\sigma$-u}}0$.
Thus
$f_n\xrightarrow{\text{$\J$-u}}0$.
 Then $A = \{n\in \omega: \exists x\in X\, (|f_n(x)|>1/2)\}\in \J$.

Second, we show that $X$ is finite. Suppose, for sake of contradiction, that $X$ is infinite. 
Since $X$ is an infinite Hausdorff space, it is not difficult 
to show that there is an infinite sequence $(U_n:n\in \omega) $ of pairwise disjoint nonempty open subsets of $X$ (see e.g.~\cite[Theorem~12.1, p.~45]{MR0776620}).   
For each $n\in \omega$, we pick $x_n\in U_n$. 
Since $X$ is a normal space, we can use Urysohn's Lemma to obtain that for every $n$  there is a continuous function $f_n:X\to[0,1]$ such that $f_n(x_n)=1$ and $f_n(x)=0$ for every $x\in X\setminus U_n$.
If we show that 
$f_n\xrightarrow{\text{$\I$-$\sigma$-u}}0$ holds 
but 
$f_n\xrightarrow{\text{$\J$-u}}0$ does not hold, we obtain a contradiction and the proof will be finished.

Let us show $f_n\xrightarrow{\text{$\I$-$\sigma$-u}}0$.
We put $X_0=X\setminus \bigcup\{U_k:k<\omega\}$ and $X_{k+1}=U_k$ for every $k\in \omega$.
Then  $X$ is covered by $\{X_k:k\in \omega\}$.
Since  $f_n\restriction X_{0}$ is a constant  function with value zero for every $n$,  $f_n\restriction X_{0}\xrightarrow{\text{$\I$-u}}0$.
Whereas for $k\in \omega$, $f_n\restriction X_{k+1}$ is a constant function with value zero for every $n\neq k$, so  $f_n\restriction X_{k+1}\xrightarrow{\text{$\I$-u}}0$.

To show  that $f_n\xrightarrow{\text{$\J$-u}}0$ does not hold, it is enough to see that  $\{n\in \omega: \exists x\in X\, (|f_n(x)|>1/2)\} 
\supseteq 
\{n\in \omega: f_n(x_n)=1\}
=
\omega\notin \J$.
\end{proof}

\begin{corollary}
\label{cor:pointwise-versus-uniform-two-ideals}
    Let $\I$ and $\J$ be ideals on $\omega$.
Let $X$ be a nonempty normal space.
The following conditions are equivalent.
\begin{enumerate}
    \item $|X|<\omega$ and $\I= \J$.
    \item $f_n\xrightarrow{\text{$\I$-p}}0 \iff f_n\xrightarrow{\text{$\J$-u}}0$
    for every  sequence $(  f_n) $ in $\cC(X)$.
    \item $ f_n\xrightarrow{\text{$\I$-qn}}0 \iff f_n\xrightarrow{\text{$\J$-u}}0$
    for every  sequence $(  f_n) $ in $\cC(X)$.
    \item $f_n\xrightarrow{\text{$\I$-$\sigma$-u}}0 \iff f_n\xrightarrow{\text{$\J$-u}}0$
    for every sequence $(  f_n) $ in $\cC(X)$.
\end{enumerate}
\end{corollary}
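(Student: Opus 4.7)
The plan is to observe that each of conditions (2), (3), (4) is a conjunction of two one-directional implications (namely, the ``$\Longrightarrow$'' and the ``$\Longleftarrow$'' parts of the stated biconditional), and each of these one-directional implications has already been characterized in the two preceding propositions. So the proof is pure bookkeeping; no genuine obstacle arises.

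First, I would handle the equivalence $(1)\Longleftrightarrow (2)$. The forward implication ``$f_n\xrightarrow{\text{$\I$-p}}0 \implies f_n\xrightarrow{\text{$\J$-u}}0$'' is, by Proposition~\ref{prop:pointwise-implies-uniform} applied with the roles of $(\I,\J)$ as given, equivalent to ``$|X|<\omega$ and $\I\subseteq\J$''. The reverse implication ``$f_n\xrightarrow{\text{$\J$-u}}0 \implies f_n\xrightarrow{\text{$\I$-p}}0$'' is the implication ``$\cL$-u $\implies$ $\I$-p'' with $\cL=\J$, which by Proposition~\ref{prop:uniform-implies-pointwise} is equivalent to $\J\subseteq\I$. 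Conjoining these two equivalences yields ``$|X|<\omega$ and $\I=\J$'', i.e.\ condition~(1).

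Next, for $(1)\Longleftrightarrow (3)$, I would run the identical argument with quasi-normal convergence replacing pointwise convergence: the forward implication ``$f_n\xrightarrow{\text{$\I$-qn}}0 \implies f_n\xrightarrow{\text{$\J$-u}}0$'' is equivalent to ``$|X|<\omega$ and $\I\subseteq\J$'' by Proposition~\ref{prop:quasinormal-implies-uniform}, while the reverse is the implication ``$\cL$-u $\implies$ $\I$-qn'' with $\cL=\J$, equivalent by Proposition~\ref{prop:uniform-implies-quasinormal} to $\J\subseteq\I$. Together this gives (1). Finally, $(1)\Longleftrightarrow (4)$ is proved in the same way: the forward implication ``$f_n\xrightarrow{\text{$\I$-$\sigma$-u}}0 \implies f_n\xrightarrow{\text{$\J$-u}}0$'' is equivalent by Proposition~\ref{prop:sigma-uniform-implies-uniform} to ``$|X|<\omega$ and $\I\subseteq\J$'', and the reverse implication is the instance $\cL\text{-u}\implies \I\text{-}\sigma\text{-u}$ with $\cL=\J$, equivalent by Proposition~\ref{prop:uniform-implies-sigma-uniform} to $\J\subseteq\I$. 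Hence all four conditions are equivalent.
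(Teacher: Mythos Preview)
Your proof is correct and takes essentially the same approach as the paper's own proof, which simply says ``It follows from Propositions~\ref{prop:uniform-implies-pointwise} and \ref{prop:sigma-uniform-implies-uniform}.'' You have just unpacked that one-line citation by splitting each biconditional into its two directions and invoking the relevant item of the appropriate proposition for each; the labels you cite all resolve to those same two propositions.
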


\begin{proof}
It follows from Propositions~\ref{prop:uniform-implies-pointwise} and \ref{prop:sigma-uniform-implies-uniform}.
\end{proof}  

\begin{corollary}
\label{cor:quasinormal-versus-uniform}
\label{cor:pointwise-versus-uniform}
    Let $\I$ and $\J$ be ideals on $\omega$.
Let $X$ be a normal space.
\begin{enumerate}
\item 
If $\I\neq \J$, then $\non(\text{$\I$-p,$\J$-u})=\non(\text{$\I$-qn,$\J$-u})=\non(\text{$\I$-$\sigma$-u,$\J$-u})=1$.
\item $X\in (\text{$\I$-p,$\I$-u})
    \iff 
    X\in (\text{$\I$-qn,$\I$-u}) 
    \iff 
    X\in (\text{$\I$-$\sigma$-u,$\I$-u}) 
    \iff
    |X|<\omega$.\label{cor:only-one-class:uniform}

\item $\non(\text{$\I$-p,$\I$-u})=\non(\text{$\I$-qn,$\I$-u})=\non(\text{$\I$-$\sigma$-u,$\I$-u})=\omega$.\label{cor:pointwise-versus-uniform:non}
\item There is no infinite normal space  in the classes $(\text{$\I$-p,$\I$-u})$,  $(\text{$\I$-qn,$\I$-u})$,  $(\text{$\I$-$\sigma$-u,$\I$-u})$.\label{cor:pointwise-versus-uniform:space}

\end{enumerate}
\end{corollary}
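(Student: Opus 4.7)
The plan is to derive all four parts as essentially bookkeeping from Corollary~\ref{cor:pointwise-versus-uniform-two-ideals}, which was just proved and which characterizes, for a nonempty normal space $X$, the equivalence of $\J$-uniform convergence with each of $\I$-p, $\I$-qn and $\I$-$\sigma$-u convergence via the conjunction $|X|<\omega$ and $\I=\J$.

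For (1), I would take $X=\{x_0\}$, which is (trivially) normal and has cardinality $1<\omega$. Since $\I\neq\J$, the hypothesis ``$|X|<\omega$ and $\I=\J$'' of Corollary~\ref{cor:pointwise-versus-uniform-two-ideals} fails, so $X$ lies in none of the three classes $(\text{$\I$-p,$\J$-u})$, $(\text{$\I$-qn,$\J$-u})$, $(\text{$\I$-$\sigma$-u,$\J$-u})$. Hence each of the three $\non$ numbers is at most $1$, and by definition each is at least $1$, so each equals $1$.

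For (2) I would specialize Corollary~\ref{cor:pointwise-versus-uniform-two-ideals} to $\J=\I$ on every nonempty normal $X$; the empty space is normal, satisfies $|\emptyset|=0<\omega$, and belongs vacuously to each class, so the biconditional $X\in\mathrm{class}\iff |X|<\omega$ extends to all normal $X$. For (3), part (2) immediately yields the lower bound $\non\geq\omega$ since every finite normal space lies in each class; for the upper bound I would exhibit $\omega$ with the discrete topology, which is normal and infinite and hence, by (2), distinguishes each of the three pairs. Part (4) is just the contrapositive of the ``infinite $\Rightarrow$ not in the class'' direction of (2).

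The ``main obstacle'' here is essentially nil: all the serious work was done in Propositions~\ref{prop:uniform-implies-pointwise} and \ref{prop:sigma-uniform-implies-uniform} (and packaged into Corollary~\ref{cor:pointwise-versus-uniform-two-ideals}). The only point requiring a moment's care is the passage from ``nonempty normal'' in Corollary~\ref{cor:pointwise-versus-uniform-two-ideals} to ``normal'' in statement (2), which is handled by treating the empty space separately as above.
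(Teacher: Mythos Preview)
Your proposal is correct and takes essentially the same approach as the paper, which simply writes ``It follows from Corollary~\ref{cor:pointwise-versus-uniform-two-ideals}.'' You have spelled out the bookkeeping in detail (including the empty-space caveat and the explicit witness $\omega$ with the discrete topology for the upper bound in (3)), but the underlying argument is identical.
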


\begin{proof}
    It follows from Corollary~\ref{cor:pointwise-versus-uniform-two-ideals}.
\end{proof}

\begin{corollary}
\label{cor:only-one-class}
Let $\I$ be an ideal on $\omega$.
Let $X$ be a normal space.
\begin{enumerate}
 
\item 
$X\in (\text{$\I$-p,$\I$-$\sigma$-u})$ $\iff$ $X\in (\text{$\I$-p,$\I$-qn})$ and $X\in (\text{$\I$-qn,$\I$-$\sigma$-u})$.\label{cor:only-one-class:nonuniform}

\item 
$\non(\text{$\I$-p,$\I$-$\sigma$-u}) = \min\{\non(\text{$\I$-p,$\I$-qn}) , \non(\text{$\I$-qn,$\I$-$\sigma$-u})\}$.\label{cor:only-one-class:nonuniform-non}

\end{enumerate}

\end{corollary}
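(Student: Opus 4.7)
The plan is to reduce both parts to the chain of implications established in Proposition~\ref{prop:easy-implications-one-ideal}(\ref{prop:easy-implications-one-ideal:item}), namely that $\I$-u $\Rightarrow$ $\I$-$\sigma$-u $\Rightarrow$ $\I$-qn $\Rightarrow$ $\I$-p always holds for sequences in $\cC(X)$. Consequently, membership of $X$ in any of the three classes $(\text{$\I$-p,$\I$-$\sigma$-u})$, $(\text{$\I$-p,$\I$-qn})$, $(\text{$\I$-qn,$\I$-$\sigma$-u})$ amounts to the normality of $X$ together with the validity of the \emph{one} nontrivial reverse implication.

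For (\ref{cor:only-one-class:nonuniform}), I first handle the forward direction. Assume $X\in(\text{$\I$-p,$\I$-$\sigma$-u})$ and take $(f_n)$ in $\cC(X)$. If $f_n\xrightarrow{\text{$\I$-p}}0$, then by assumption $f_n\xrightarrow{\text{$\I$-$\sigma$-u}}0$, which by the chain gives $f_n\xrightarrow{\text{$\I$-qn}}0$; this yields $X\in(\text{$\I$-p,$\I$-qn})$. If $f_n\xrightarrow{\text{$\I$-qn}}0$, then (again by the chain) $f_n\xrightarrow{\text{$\I$-p}}0$, so by assumption $f_n\xrightarrow{\text{$\I$-$\sigma$-u}}0$; this yields $X\in(\text{$\I$-qn,$\I$-$\sigma$-u})$. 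For the reverse direction, suppose both $X\in(\text{$\I$-p,$\I$-qn})$ and $X\in(\text{$\I$-qn,$\I$-$\sigma$-u})$. If $(f_n)$ is a sequence in $\cC(X)$ with $f_n\xrightarrow{\text{$\I$-p}}0$, then the first hypothesis gives $f_n\xrightarrow{\text{$\I$-qn}}0$ and the second then gives $f_n\xrightarrow{\text{$\I$-$\sigma$-u}}0$, proving $X\in(\text{$\I$-p,$\I$-$\sigma$-u})$.

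Part (\ref{cor:only-one-class:nonuniform-non}) is then a purely set-theoretic consequence of (\ref{cor:only-one-class:nonuniform}): the collection of normal spaces \emph{not} lying in $(\text{$\I$-p,$\I$-$\sigma$-u})$ equals the union of the collections of normal spaces not lying in $(\text{$\I$-p,$\I$-qn})$ and not lying in $(\text{$\I$-qn,$\I$-$\sigma$-u})$. Taking the minimum of $|X|$ over a union of two sets equals the minimum of the two minima (with the convention $\min\{\infty,\kappa\}=\kappa$ covering the degenerate cases), which gives the stated equality.

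There is no real obstacle here; the whole argument is a logical manipulation on top of the fixed chain $\I$-u $\Rightarrow$ $\I$-$\sigma$-u $\Rightarrow$ $\I$-qn $\Rightarrow$ $\I$-p. The only thing to be careful about is that ``$X\in(\alpha,\beta)$'' implicitly requires $X$ to be normal, but normality is a property of $X$ alone and is shared across all three classes, so it factors out of the equivalence in (\ref{cor:only-one-class:nonuniform}) and does not affect the cardinal computation in (\ref{cor:only-one-class:nonuniform-non}).
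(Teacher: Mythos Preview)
Your proof is correct and follows essentially the same route as the paper: both use the chain from Proposition~\ref{prop:easy-implications-one-ideal}(\ref{prop:easy-implications-one-ideal:item}) to show that the single nontrivial implication in each class determines membership, and both derive part~(\ref{cor:only-one-class:nonuniform-non}) directly from part~(\ref{cor:only-one-class:nonuniform}). Your explicit remark that the complement of $(\text{$\I$-p,$\I$-$\sigma$-u})$ is the union of the other two complements is exactly what the paper means by ``It follows from item~(\ref{cor:only-one-class:nonuniform}).''
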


\begin{proof}

(\ref{cor:only-one-class:nonuniform})
Since the implication ``$\impliedby$'' is obvious, we only show the reversed one. Assume that 
$X\in (\text{$\I$-p,$\I$-$\sigma$-u})$.

First we will show that $X$ is in the class ($\I$-p,$\I$-qn). By Proposition~\ref{prop:easy-implications-one-ideal}, if $f_n\xrightarrow{\text{$\I$-qn}}0$ then $f_n\xrightarrow{\text{$\I$-p}}0$, for every sequence $(  f_n) $ in $\cC(X)$. On the other hand, if $(  f_n) \in\cC(X)$ is such that $f_n\xrightarrow{\text{$\I$-p}}0$ then $f_n\xrightarrow{\text{$\I$-$\sigma$-u}}0$ (as $X$ is in the class ($\I$-p,$\I$-$\sigma$-u)), so also $f_n\xrightarrow{\text{$\I$-qn}}0$ (by Proposition~\ref{prop:easy-implications-one-ideal}).

Now we show that $X$ is in the class ($\I$-qn,$\I$-$\sigma$-u). By Proposition~\ref{prop:easy-implications-one-ideal}, if $f_n\xrightarrow{\text{$\I$-$\sigma$-u}}0$ then $f_n\xrightarrow{\text{$\I$-qn}}0$, for every sequence $(  f_n) $ in $\cC(X)$. On the other hand, if $(  f_n) \in\cC(X)$ is such that $f_n\xrightarrow{\text{$\I$-qn}}0$ then $f_n\xrightarrow{\text{$\I$-p}}0$ (by Proposition~\ref{prop:easy-implications-one-ideal}), so also $f_n\xrightarrow{\text{$\I$-$\sigma$-u}}0$  (as $X$ is in the class ($\I$-p,$\I$-$\sigma$-u)).

(\ref{cor:only-one-class:nonuniform-non}) 
It follows from item (\ref{cor:only-one-class:nonuniform}).
\end{proof}


\section{Spaces not distinguishing  \texorpdfstring{$\sigma$}{sigma}-uniform convergence}
\label{sec:sigma-uniform}

In the sequel, we use the convention that $\min \emptyset = \infty$ and $\kappa<\infty$ for every cardinal $\kappa$.

\begin{notation}
	Let $\I$ be an ideal on $\omega$.
\begin{enumerate}
    \item 
	$ \widehat{\cP}_\I = \{(  A_n)  \in \I^\omega : A_n\cap A_k=\emptyset \text{ for all distinct $n, k$}\}$.

 \item $\cP_\I =\{ (  A_n )   \in \widehat{\cP}_\I: \bigcup\{A_n: n\in \omega\} = \omega\}$.

 \item $\cM_\I= \left\{(  E_k) \in\I^\omega: \forall k\in\omega\, (E_k\subseteq E_{k+1})\right\}.$
 
\end{enumerate}
\end{notation}

\begin{definition}
Let $\I, \J, \K$ be ideals  on $\omega$.
\begin{enumerate}
    \item 
$\bnumber_s(\I,\J,\K) = 
			\min \{|\cE|:\cE\subseteq\widehat{\cP}_\K \land \forall (  A_n) \in\cP_\J \, \exists (  E_n) \in\cE \, (\bigcup_{n\in\omega}(A_{n+1}\cap \bigcup_{i\leq n}E_i)\notin\I)\}.$

\item  $\bs(\I,\J) = \min \{|\cE|: \cE\subseteq\cM_\I \land \forall (  A_n) \in\cM_\J \, \exists (  E_n) \in\cE \, \exists^\infty n\, (E_n\not\subseteq A_n)\}.$

\item $\add_\omega(\I,\J) = \min\{|\cA|:\cA\subseteq\I\land \forall  (  B_n) \in \J^{\omega}\, \exists A\in \cA\, \forall n\in \omega\,(A\not\subseteq B_n))\}.$

\end{enumerate}

  In the sequel, we will use the following shorthands: 
  $\bnumber_s(\I)=\bnumber_s(\I,\I,\I)$, 
  $\bs(\I)=\bs(\I,\I)$, 
  $\add_\omega(\I)=\add_\omega(\I,\I)$.
\end{definition}

The cardinal $\bnumber_s(\I,\J,\K)$ was introduced by Staniszewski~\cite[p.~1184]{MR3624786}  to characterize the smallest size of a space which is not $(\I,\J,\K)$-QN. Later Repick\'{y}~\cite{MR4336563,MR4336564}, among others, characterized the same class of spaces in terms of another cardinal. In \cite{MR3423409}, \v{S}upina introduced the cardinal $\kappa(\I,\J)$ which is equal to $\bnumber_s(\J,\J,\I)$.
In the case of maximal ideal, $\bb_s(\I,\I,\I)$ and $\bb_s(\I,\fin,\fin)$ were studied by Canjar~\cite{CanjarPhD,MR0924678,MR1036675}. In the case of Borel ideals, $\bb_s(\I,\I,\I)$ and $\bb_s(\I,\fin,\fin)$ were extensively studied in \cite{MR4472525}.

The cardinals $\bs(\I,\J)$ and $\add_\omega(\I,\J)$ are introduced here but the latter cardinal appeared, in a sense, in \cite{MR3624786} were the author introduced the notion of $\kappa$-P$(\Fin,\J)$-ideals, because  it is not difficult to see that
 $\add_\omega(\I,\J) = \min\{\kappa: \I \text{ is not } \kappa \text{-P}(\Fin,\J)\}$.

\begin{theorem}
\label{thm:pointwise-implies-quasinormal:necessary-condition}
\label{thm:quasinormal-implies-sigma-uniform:necessary-condition}
\label{thm:pointwise-implies-sigma-uniform:necessary-condition}
\label{thm:pointwise-implies-quasinormal}
\label{thm:pointwise-implies-sigma-uniform}
\label{thm:quasinormal-implies-sigma-uniform}

    Let $\I,\J,\K$ be ideals on $\omega$.
Let $X$ be a nonempty topological space.

\begin{enumerate}
\item In the following list of conditions, each implies the next.\label{thm:pointwise-implies-quasinormal:necessary-condition:item}

\begin{enumerate}
    
\item  $|X|<\bb_s(\J,\J,\I)$.\label{thm:pointwise-implies-quasinormal:necessary-condition:item-cardinal}
\item  
 $f_n\xrightarrow{\text{$\I$-p}}0 \implies f_n\xrightarrow{\text{$\J$-qn}}0$
  for every  sequence $(  f_n) $ in $\cC(X)$.\label{thm:pointwise-implies-quasinormal:necessary-condition:item-functions}

\item $\I\subseteq\J$.\label{thm:pointwise-implies-quasinormal:necessary-condition:item-ideals}
\end{enumerate}

\item In the following list of conditions, each implies the next.\label{thm:quasinormal-implies-sigma-uniform:necessary-condition:item}

\begin{enumerate}
\item  $|X|<\add_\omega(\J,\K)$.\label{thm:quasinormal-implies-sigma-uniform:necessary-condition:item-cardinal}

\item 
 $f_n\xrightarrow{\text{$\J$-qn}}0 \implies f_n\xrightarrow{\text{$\K$-$\sigma$-u}}0$
  for every  sequence $(  f_n) $ in $\cC(X)$.\label{thm:quasinormal-implies-sigma-uniform:necessary-condition:item-functions}

\item $\J\subseteq\K$.\label{thm:quasinormal-implies-sigma-uniform:necessary-condition:item-ideals}
\end{enumerate}

\item In the following list of conditions, each implies the next.\label{thm:pointwise-implies-sigma-uniform:necessary-condition:item}

\begin{enumerate}

\item  $|X|<\bs(\I,\K)$.\label{thm:pointwise-implies-sigma-uniform:necessary-condition:item-cardinal} 

\item 
 $f_n\xrightarrow{\text{$\I$-p}}0 \implies f_n\xrightarrow{\text{$\K$-$\sigma$-u}}0$
   for every  sequence $(  f_n) $ in $\cC(X)$.\label{thm:pointwise-implies-sigma-uniform:necessary-condition:item-functions}

\item $\I\subseteq\K$.\label{thm:pointwise-implies-sigma-uniform:necessary-condition:item-ideals}
\end{enumerate}
\end{enumerate}
The above  implications are presented graphically on  Figure~\ref{fig:diag-3}.
\end{theorem}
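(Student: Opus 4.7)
My plan is to prove each of the three parts separately, splitting each into the two implications in the listed chain. Within each part, $(b)\Rightarrow(c)$ is a routine variation of the last argument in the proof of Proposition~\ref{prop:uniform-implies-pointwise}, while $(a)\Rightarrow(b)$ carries all the combinatorial content. For $(b)\Rightarrow(c)$ in all three parts I pick $A$ in the source ideal and consider the constant continuous functions $f_n(x)\equiv\chf_A(n)$; these converge to zero in the source sense (pointwise in Parts~(1) and (3), and quasi-normally in Part~(2) witnessed e.g.~by $\varepsilon_n=2$ on $A$ and $\varepsilon_n=1/(n+1)$ off $A$). Applying $(b)$ and evaluating at any $x_0\in X$ then delivers $A$ in the target ideal; for the quasi-normal target in Part~(1) this uses the decomposition $A\subseteq\{n\in A:\varepsilon_n\leq 1\}\cup\{n:\varepsilon_n>1\}$, both pieces lying in $\J$.

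For Part~(3)\,$(a)\Rightarrow(b)$, I fix $(f_n)\in\cC(X)^\omega$ with $f_n\xrightarrow{\text{$\I$-p}}0$ and assign to each $x\in X$ the chain $E_n^x:=\{k:|f_k(x)|\geq 1/(n+1)\}\in\I$, so $(E_n^x)\in\cM_\I$. The family $\{(E_n^x):x\in X\}$ has size $<\bs(\I,\K)$ and hence is not a witness, yielding $(A_n)\in\cM_\K$ such that for every $x$ there is $m_x$ with $E_n^x\subseteq A_n$ for $n\geq m_x$. Setting $X_m:=\{x\in X:m_x\leq m\}$ gives $X=\bigcup_m X_m$, and for fixed $m$ and $\varepsilon>0$ choosing $n_0\geq m$ with $1/(n_0+1)<\varepsilon$ yields $\{k:\exists x\in X_m,\,|f_k(x)|\geq\varepsilon\}\subseteq A_{n_0}\in\K$, so $f_n\restriction X_m\xrightarrow{\text{$\K$-u}}0$.

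For Part~(2)\,$(a)\Rightarrow(b)$, I start from $f_n\xrightarrow{\text{$\J$-qn}}0$ with witnessing $(\varepsilon_n)\xrightarrow{\J}0$ and $C_x:=\{n:|f_n(x)|\geq\varepsilon_n\}\in\J$. A preliminary step I need is that $\add_\omega(\J,\K)\geq 2$ already forces $\J\subseteq\K$: otherwise some $A\in\J\setminus\K$ makes $\{A\}$ a witness of size one. Since $\{C_x:x\in X\}\subseteq\J$ has size $<\add_\omega(\J,\K)$ it is not a witness, so there is $(B_n)\in\K^\omega$ such that for each $x$ some $n(x)$ satisfies $C_x\subseteq B_{n(x)}$. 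Taking $X_n:=\{x:C_x\subseteq B_n\}$ covers $X$, and $\{k:\exists x\in X_n,\,|f_k(x)|\geq\varepsilon\}\subseteq B_n\cup\{k:\varepsilon_k\geq\varepsilon\}\in\K$ thanks to $\J\subseteq\K$.

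For Part~(1)\,$(a)\Rightarrow(b)$, I use pairwise disjoint level strips $E_0^x=\{k:|f_k(x)|\geq 1\}$ and $E_n^x=\{k:1/(n+1)\leq|f_k(x)|<1/n\}$ for $n\geq 1$, giving $(E_n^x)\in\widehat{\cP}_\I$ with $\bigcup_{i\leq n}E_i^x=\{k:|f_k(x)|\geq 1/(n+1)\}$. By $|X|<\bb_s(\J,\J,\I)$ there is $(A_n)\in\cP_\J$ with $\bigcup_n(A_{n+1}\cap\bigcup_{i\leq n}E_i^x)\in\J$ for every $x$. I set $\varepsilon_k=1/n$ on $A_n$ for $n\geq 1$ and $\varepsilon_k=2$ on $A_0$; the level set $\{k:\varepsilon_k\geq\varepsilon\}$ is then a finite union of $A_i$'s, so in $\J$, and $\{k:|f_k(x)|\geq\varepsilon_k\}\subseteq A_0\cup\bigcup_n(A_{n+1}\cap\bigcup_{i\leq n}E_i^x)\in\J$. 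The subtlest obstacle is Part~(2)'s use of $\J\subseteq\K$ to absorb the term $\{k:\varepsilon_k\geq\varepsilon\}$ into $\K$, which must be derived from $|X|<\add_\omega(\J,\K)$ and nonemptiness of $X$ rather than assumed; the rest is bookkeeping, in particular matching the index shift $A_{n+1}$ appearing in the definition of $\bb_s$ with the dyadic threshold $1/(n+1)$ chosen on the analytic side of Part~(1).
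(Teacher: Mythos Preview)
Your proof is correct and, for Parts~(2) and~(3), essentially identical to the paper's argument (minor cosmetic differences, e.g.\ you take $X_m=\{x:m_x\leq m\}$ while the paper uses $\{x:m_x=m\}$, and you handle $(3)(b)\Rightarrow(c)$ directly rather than reducing to $(1)$ via $\K$-$\sigma$-u $\Rightarrow$ $\K$-qn).

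The substantive difference is in Part~(1), $(a)\Rightarrow(b)$: the paper simply cites \v{S}upina's work (Theorems~5.1 and~6.2 of \cite{MR3423409}), whereas you supply a direct, self-contained argument using the level strips $E_n^x$ and constructing the quasi-normal witness $(\varepsilon_k)$ explicitly from a partition $(A_n)\in\cP_\J$ obtained from the negation of ``$\{(E_n^x):x\in X\}$ is a witness for $\bb_s(\J,\J,\I)$''. Your argument is correct: the key inclusion $\{k:|f_k(x)|\geq\varepsilon_k\}\subseteq A_0\cup\bigcup_{n}(A_{n+1}\cap\bigcup_{i\leq n}E_i^x)$ holds because for $k\in A_{n+1}$ the condition $|f_k(x)|\geq 1/(n+1)$ lands $k$ in $\bigcup_{i\leq n}E_i^x$, and the right-hand side lies in $\J$ by construction. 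This makes the exposition self-contained at the cost of a short additional paragraph, which is a reasonable trade.
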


\begin{figure}[h]
    \centering
\begin{tikzcd}[row sep=7em,column sep=7em]
\text{$\I$-p}  \arrow[d,swap, dashed, "|X|<\bnumber_s(\J{,}\J{,}\I)"] \arrow[dr, dashed, "|X|<\bs(\I{,}\K)"]  & 
\text{$\cL$-u} \\
\text{$\J$-qn} \arrow[r, dashed, "|X|<\add_\omega(\J{,}\K)"] & 
\text{$\K$-$\sigma$-u}
\end{tikzcd}
\caption{Diagram for Theorem~\ref{thm:pointwise-implies-quasinormal:necessary-condition}, where 
``$\text{$\J$-qn}\xdashrightarrow{|X|<\add_\omega(\J,\K) } \text{$\K$-$\sigma$-u}$'' is a counterpart of the implication ``$(\ref{thm:quasinormal-implies-sigma-uniform:necessary-condition:item-cardinal})\implies(\ref{thm:quasinormal-implies-sigma-uniform:necessary-condition:item-functions})$'', and similarly for other arrows.}
    \label{fig:diag-3}
\end{figure}

\begin{proof}
$(\ref{thm:pointwise-implies-quasinormal:necessary-condition:item-cardinal})
\implies (\ref{thm:pointwise-implies-quasinormal:necessary-condition:item-functions})$
It follows from \cite[Theorems~5.1 and 6.2]{MR3423409}.

$(\ref{thm:pointwise-implies-quasinormal:necessary-condition:item-functions})
\implies (\ref{thm:pointwise-implies-quasinormal:necessary-condition:item-ideals})$
Let $A\in \I$.
We define $f_n:X\to\R$ by $f_n(x)=\chf_{A}(n)$ for every $x\in X$. Then $f_n$ are constant so continuous and  
$f_n\xrightarrow{\text{$\I$-p}}0$.
Thus
$f_n\xrightarrow{\text{$\J$-qn}}0$.
Then there exists a sequence $(\varepsilon_n)$ of positive reals which is $\J$-convergent to zero and 
$\{n\in \omega:|f_n(x)|\geq \varepsilon_n\}\in \J$ for every $x\in X$.
Let $x_0\in X$.
Then 
$
A 
=
\{n\in \omega:|f_n(x_0)|>1/2\} 
\subseteq 
\{n\in \omega:|f_n(x_0)|>\varepsilon_n\land \varepsilon_n<1/2\}
\cup
\{n\in \omega:\varepsilon_n\geq 1/2\}
\subseteq 
\{n\in \omega:|f_n(x_0)|>\varepsilon_n\}
\cup
\{n\in \omega:\varepsilon_n\geq 1/2\}
\in \J $.

$(\ref{thm:quasinormal-implies-sigma-uniform:necessary-condition:item-cardinal})\implies(\ref{thm:quasinormal-implies-sigma-uniform:necessary-condition:item-functions})$
If $\J\not\subseteq\K$, then it is easy to see that $\add_\omega(\J,\K)=1$. (Indeed, let $E\in \I\setminus \J$ and $\cE=\{E\}$. Take any $(A_n)\in \cM_\J$. Then $E\not\subseteq A_n$ for every $n\in \omega$.) Hence, there is nothing to prove in that case. Below we assume that $\J\subseteq\K$.

Suppose that $|X|<\add_\omega(\J,\K)$ and let $(  f_n) $ be a sequence in $\cC(X)$ such that $f_n\xrightarrow{\text{$\J$-qn}}0 $.
Then there exists a sequence $(\varepsilon_n)$ of positive reals which is $\J$-converegnt to zero and 
$\{n\in \omega: |f_n(x)|\geq \varepsilon_n\}\in \J$ for every $x\in X$.
We define
$E^x=\left\{n\in\omega: |f_n(x)|\geq\varepsilon_n\right\}$ for every $x\in X$.
Since $\{E^x:x\in X\}\subseteq\J$ and  $|X|<\add_\omega(\J,\K)$, there is $\cB =\{B_k:k\in\omega\}\subseteq \K$ such that for each $x\in X$ there is $k\in\omega$ with  $E^x\subseteq B_k$. 
We define  $X_k=\{x\in X: E^x\subseteq B_k\}$ for each $k\in \omega$.
It is easy to see that   $X = \bigcup\{X_k:k\in\omega\}$, and we show that  $f_n\restriction X_k$ converges $\K$-uniformly to $0$  for every $k\in\omega$. 
Fix any $k\in\omega$ and $\varepsilon>0$. 
Since $\J\subseteq\K$ and $\varepsilon_n\xrightarrow{\J}0$, the set $C_\varepsilon = \{n\in \omega:\varepsilon_n\geq \varepsilon\}\in \K$.
For every $x\in X_k$, we have 
$
\{n\in \omega:|f_n(x)|\geq \varepsilon\} 
\subseteq 
\{n\in \omega:|f_n(x)|\geq \varepsilon_n\land \varepsilon>\varepsilon_n\} 
\cup
\{n\in \omega:\varepsilon_n\geq \varepsilon\}
\subseteq 
E^x\cup C_\varepsilon 
\subseteq B_k\cup C_\varepsilon$.
Consequently, 
$\{n\in \omega:\exists x\in X_k\,(|f_n(x)|\geq \varepsilon)\} 
\subseteq B_k\cup C_\varepsilon\in \K$.

$(\ref{thm:quasinormal-implies-sigma-uniform:necessary-condition:item-functions})\implies(\ref{thm:quasinormal-implies-sigma-uniform:necessary-condition:item-ideals})$
Let $A\in \J$.
We define $f_n:X\to\R$ by $f_n(x)=\chf_{A}(n)$ for every $x\in X$. Then $f_n$ are constant so continuous and  
$f_n\xrightarrow{\text{$\J$-qn}}0$.
Thus
$f_n\xrightarrow{\text{$\K$-$\sigma$-u}}0$.
Then there exists a cover  $\{X_k:k\in \omega\}$ of $X$ such that  
$f_n\restriction X_k\xrightarrow{\text{$\K$-u}}0$ for every $k\in \omega$.
Let $x_0\in X$ and $k_0\in \omega$ be such that $x_0\in X_{k_0}$.
Then 
$
A 
=
\{n\in \omega:|f_n(x_0)|>1/2\} 
\subseteq 
\{n\in \omega:\exists x\in X_{k_0}\,(|f_n(x)|>1/2)\}
\in \K$.

$
(\ref{thm:pointwise-implies-sigma-uniform:necessary-condition:item-cardinal})
\implies
(\ref{thm:pointwise-implies-sigma-uniform:necessary-condition:item-functions})
$
Suppose that $|X|<\bs(\I,\K)$ and let $(  f_n) $ be a sequence in $\cC(X)$ such that $f_n\xrightarrow{\text{$\I$-p}}0 $.
For every $x\in X$ and $k\in\omega$ define:
$$E_k^x=\left\{n\in\omega: |f_n(x)|\geq\frac{1}{k+1}\right\}.$$
Observe that $E^x_{k}\in\I$ and $E_k^x\subseteq E^x_{k+1}$ for all $x\in X$ and $k\in\omega$, i.e.,  $(  E^x_k) \in\cM_\I$ for all $x\in X$.

Since the family $\cE=\left\{(E^x_k) : x\in X\right\}$ has cardinality $|X|<\bs(\I,\K)$, there is $(  A_k) \in\cM_\K$ such that for each $x\in X$ there is $m_x\in\omega$ such that $E^x_k\subseteq A_k$ for all $k\geq m_x$. Define $X_m=\{x\in X: m=m_x\}$ and note that $\bigcup_{m\in\omega}X_m=X$. 

We claim that $f_n\restriction X_m$ converges $\K$-$\sigma$-uniformly to $0$  for every $m\in\omega$. Fix any $m\in\omega$ and $\varepsilon>0$. Let $k\in\omega$ be such that $k\geq m$ and $\frac{1}{k+1}<\varepsilon$. Since $A_k\in\K$, to finish the proof it suffices to show that $|f_n(x)|<\varepsilon$ for every $x\in X_m$ and $n\in\omega\setminus A_k$. Fix $x\in X_m$ and $n\in\omega\setminus A_k$. Since $k\geq m=m_x$, we have $E^x_k\subseteq A_k$. Hence, $\omega\setminus E^x_k\supseteq \omega\setminus A_k\ni n$. Thus, $|f_n(x)|<\frac{1}{k+1}<\varepsilon$ and we are done.

$
(\ref{thm:pointwise-implies-sigma-uniform:necessary-condition:item-functions})
\implies
(\ref{thm:pointwise-implies-sigma-uniform:necessary-condition:item-ideals})
$
It follows from item (\ref{thm:pointwise-implies-quasinormal:necessary-condition:item}), because
$f_n\xrightarrow{\text{$\K$-$\sigma$-u}}0
\implies 
f_n\xrightarrow{\text{$\K$-qn}}0$ by Proposition~\ref{prop:easy-implications-one-ideal}.
\end{proof}

\begin{corollary}
    Let $\I$ and $\J$  be ideals on $\omega$.
If $\I\neq\J$, then 
 $\non(\text{$\I$-p,$\J$-$\sigma$-u})=\non(\text{$\I$-p,$\J$-qn})=\non(\text{$\I$-qn,$\J$-$\sigma$-u})=1$.
\end{corollary}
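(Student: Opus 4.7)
The plan is to show that a one-point normal space already distinguishes each of the three pairs of convergences, which forces $\non \le 1$, and hence equality in all three cases.

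I will take $X = \{x_0\}$, which is trivially normal. The key observation, to be used throughout, is that on a singleton every real-valued function is continuous and, for any ideal $\K$ on $\omega$, the four notions $\K$-p, $\K$-u, $\K$-qn and $\K$-$\sigma$-u applied to a sequence $(f_n)$ in $\cC(X)$ all collapse to the single condition that the real sequence $(f_n(x_0))$ is $\K$-convergent to $0$ in the sense of Section~\ref{ConvDef}. Indeed, $\K$-p and $\K$-u coincide tautologically on a single point (the quantifiers $\forall x \in X$ and $\exists x \in X$ both reduce to $x = x_0$), and by Proposition~\ref{prop:easy-implications-one-ideal} the notions $\K$-qn and $\K$-$\sigma$-u are sandwiched between $\K$-u and $\K$-p.

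Since $\I \neq \J$, I will pick $A$ in the symmetric difference $(\I \setminus \J) \cup (\J \setminus \I)$ and define $f_n(x_0) = \chf_A(n)$, obtaining a sequence in $\cC(X)$. By the observation, $(f_n)$ is $\K$-p (equivalently $\K$-u, $\K$-qn, $\K$-$\sigma$-u) convergent to $0$ precisely when $A \in \K$. Therefore exactly one of ``$f_n \xrightarrow{\text{$\I$-p}} 0$'' and ``$f_n \xrightarrow{\text{$\J$-$\sigma$-u}} 0$'' holds, and likewise for the other two pairs. Consequently $X$ simultaneously witnesses $X \notin (\text{$\I$-p}, \text{$\J$-$\sigma$-u})$, $X \notin (\text{$\I$-p}, \text{$\J$-qn})$, and $X \notin (\text{$\I$-qn}, \text{$\J$-$\sigma$-u})$, so each of the three $\non$-values is at most $1$.

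There is no serious obstacle in this argument. The only point that might deserve spelling out is the collapse of $\K$-qn to $\K$-p on a singleton; given $f_n(x_0) \xrightarrow{\K} 0$, setting $\varepsilon_n = |f_n(x_0)| + 2^{-n}$ yields $\varepsilon_n \xrightarrow{\K} 0$ together with $\{n : |f_n(x_0)| \ge \varepsilon_n\} = \emptyset \in \K$, which gives the nontrivial direction.
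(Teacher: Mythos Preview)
Your proof is correct and uses essentially the same idea as the paper: the paper derives the corollary from Proposition~\ref{prop:uniform-implies-pointwise} and Theorem~\ref{thm:pointwise-implies-quasinormal:necessary-condition}, whose relevant implications are proved precisely by taking the constant sequence $f_n(x)=\chf_A(n)$ for some $A$ in one ideal but not the other. You unpack this same construction directly on a singleton, with the pleasant extra observation that on a one-point space all four modes of $\K$-convergence collapse to ordinary $\K$-convergence of the real sequence $(f_n(x_0))$; this lets you handle all three pairs and both directions of the biconditional in one stroke, whereas the paper's route implicitly splits into cases according to which inclusion $\I\subseteq\J$ or $\J\subseteq\I$ fails and which cited result applies.
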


\begin{proof}
It follows from Proposition~\ref{prop:uniform-implies-pointwise} and Theorem~\ref{thm:pointwise-implies-quasinormal:necessary-condition}.
\end{proof}

\begin{proposition}
\label{prop:quasinormal-convergence:restriction-to-less-than-bs-sets}
\label{prop:sigma-uniform-convergence:restriction-to-less-than-bsigma-sets}
    Let $\I$ be an ideal on $\omega$.
    Let $X$ be a topological space and suppose that $X=\bigcup\{X_\alpha:\alpha<\kappa\}$.
Let $(f_n)$ be a sequence in $\cC(X)$.
    \begin{enumerate}
        \item If $\kappa<\bnumber_s(\I)$ and $f_n\restriction X_\alpha \xrightarrow{\text{$\I$-qn}}0$ for every $\alpha<\kappa$, then 
        $f_n\xrightarrow{\text{$\I$-qn}}0$.\label{prop:quasinormal-convergence:restriction-to-less-than-bs-sets:item}

        \item If $\kappa<\bs(\I)$ and $f_n\restriction X_\alpha \xrightarrow{\text{$\I$-$\sigma$-u}}0$ for every $\alpha<\kappa$, then 
        $f_n\xrightarrow{\text{$\I$-$\sigma$-u}}0$.\label{prop:sigma-uniform-convergence:restriction-to-less-than-bsigma-sets:item}
\end{enumerate}
\end{proposition}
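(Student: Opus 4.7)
For part (\ref{prop:sigma-uniform-convergence:restriction-to-less-than-bsigma-sets:item}) the plan is to invert the definition of $\bs(\I)$: since $\kappa<\bs(\I)$, every family $\cE\subseteq\cM_\I$ of size at most $\kappa$ admits an eventually dominating sequence $(A_m)\in\cM_\I$, in the sense that for every $(E_m)\in\cE$ one has $E_m\subseteq A_m$ for all but finitely many~$m$. For each $\alpha<\kappa$ I would fix an increasing cover $\{X_{\alpha,k}:k\in\omega\}$ of $X_\alpha$ witnessing the $\I$-$\sigma$-uniform convergence (replacing $X_{\alpha,k}$ by $\bigcup_{j\le k}X_{\alpha,j}$ preserves $\I$-uniform convergence) and set
\[
F^\alpha_m=\{n\in\omega:\exists x\in X_{\alpha,m}\,(|f_n(x)|\ge 1/(m+1))\}.
\]
The monotonicities $X_{\alpha,m}\subseteq X_{\alpha,m+1}$ and $1/(m+1)\ge 1/(m+2)$ give $(F^\alpha_m)\in\cM_\I$, so the $\bs(\I)$-bound applied to $\{(F^\alpha_m):\alpha<\kappa\}$ yields $(A_m)\in\cM_\I$ and thresholds $m_\alpha$ with $F^\alpha_m\subseteq A_m$ for all $m\ge m_\alpha$. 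The proposed countable cover of $X$ is $Y_m=\bigcup\{X_{\alpha,m}:m_\alpha\le m\}$; any $x\in X_{\alpha,k}$ lies in $Y_{\max(k,m_\alpha)}$. To verify $f_n\restriction Y_m\xrightarrow{\text{$\I$-u}}0$, given $\varepsilon>0$ I would pick $l\ge m$ with $1/(l+1)<\varepsilon$ and observe that any $x\in Y_m$ with $|f_n(x)|\ge\varepsilon$ lies in some $X_{\alpha,m}\subseteq X_{\alpha,l}$ with $m_\alpha\le l$, forcing $n\in F^\alpha_l\subseteq A_l\in\I$.

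For part (\ref{prop:quasinormal-convergence:restriction-to-less-than-bs-sets:item}) the analogous scheme uses $\bnumber_s(\I)$. For each $\alpha$ I would take the $\I$-quasi-normal witness $(\varepsilon^\alpha_n)$ and convert it into a partition $(\tilde E^\alpha_k)\in\cP_\I$ via $E^\alpha_k=\{n:\varepsilon^\alpha_n>1/(k+1)\}$ and $\tilde E^\alpha_k=E^\alpha_k\setminus E^\alpha_{k-1}$ (with $E^\alpha_{-1}=\emptyset$); these cover $\omega$ because $\varepsilon^\alpha_n>0$ for all~$n$. Applying $\kappa<\bnumber_s(\I)$ to the family $\{(\tilde E^\alpha_k):\alpha<\kappa\}\subseteq\widehat{\cP}_\I$ produces $(A_n)\in\cP_\I$ with
\[
\bigcup_{n\in\omega}\bigl(A_{n+1}\cap E^\alpha_n\bigr)\in\I\qquad\text{for every }\alpha
\]
(recalling that $\bigcup_{i\le n}\tilde E^\alpha_i=E^\alpha_n$). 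I would then define $\varepsilon_n=1/k$ if $n\in A_k$ with $k\ge 1$, and $\varepsilon_n=1$ if $n\in A_0$; the set $\{n:\varepsilon_n\ge 1/(m+1)\}$ is a finite union of $A_k$'s, so $\varepsilon_n\xrightarrow{\I}0$. For $x\in X_\alpha$, the inclusion
\[
\{n:|f_n(x)|\ge\varepsilon_n\}\subseteq\{n:|f_n(x)|\ge\varepsilon^\alpha_n\}\cup\{n:\varepsilon_n<\varepsilon^\alpha_n\}
\]
reduces $\I$-quasi-normal convergence at $x$ to showing the second summand is in $\I$, which follows by expanding definitions: the second summand decomposes as $(A_0\cap\{n:\varepsilon^\alpha_n>1\})\cup\bigcup_n(A_{n+1}\cap E^\alpha_n)\in\I$, both pieces being in $\I$ (the first because $\varepsilon^\alpha_n\xrightarrow{\I}0$, the second by the choice of $(A_n)$).

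The main obstacle in both parts is purely combinatorial index bookkeeping: one must carefully align the monotonicity required by $\cM_\I$ (respectively the off-by-one shift $A_{n+1}$ against $\bigcup_{i\le n}E_i$ built into the $\bnumber_s(\I)$ condition) with the threshold levels $1/(m+1)$ and $1/k$. Once the indices are matched, the rest of each argument is a short inclusion chase and no further combinatorial input beyond the definitions of the cardinals is required.
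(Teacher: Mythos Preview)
Your proofs are correct. For part~(\ref{prop:sigma-uniform-convergence:restriction-to-less-than-bsigma-sets:item}) your argument is essentially the paper's: both apply the definition of $\bs(\I)$ directly to sets of the form $\{n:\exists x\in(\text{piece})\,(|f_n(x)|\ge 1/(m+1))\}$ and then regroup the pieces according to the resulting thresholds. The only difference is bookkeeping---the paper first absorbs the countable $\sigma$-covers into the index set via $\kappa\cdot\omega=\kappa$ (reducing to $\I$-uniform convergence on each $X_\alpha$), whereas you keep the double index and fold both the cover level and the $\varepsilon$-threshold into a single $F^\alpha_m$.

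For part~(\ref{prop:quasinormal-convergence:restriction-to-less-than-bs-sets:item}) your route is genuinely different. The paper argues indirectly: it equips $\kappa$ with the discrete topology, defines $\phi_n(\alpha)=\varepsilon^\alpha_n$, observes $\phi_n\xrightarrow{\text{$\I$-p}}0$, and invokes the already-established implication $|X|<\bnumber_s(\I)\Rightarrow(\I\text{-p}\Rightarrow\I\text{-qn})$ from Theorem~\ref{thm:pointwise-implies-quasinormal:necessary-condition} to produce a single witness $(\varepsilon_n)$ that dominates all the $(\varepsilon^\alpha_n)$ modulo $\I$. You instead unwind the definition of $\bnumber_s(\I)$ directly, converting each $(\varepsilon^\alpha_n)$ into an element of $\widehat{\cP}_\I$ and constructing $(\varepsilon_n)$ by hand from the resulting partition $(A_n)\in\cP_\I$. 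The paper's approach is shorter and reuses earlier machinery; yours is self-contained and makes the combinatorics behind $\bnumber_s(\I)$ explicit.
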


\begin{proof}
(\ref{prop:quasinormal-convergence:restriction-to-less-than-bs-sets:item})
For each $\alpha<\kappa$, there is a sequence $(\varepsilon^\alpha_n)$ of positive reals which is $\I$-convergent to zero and
$A_{x,\alpha} = \{n\in \omega: |f_n(x)|\geq \varepsilon^\alpha_n\}\in \I$
for every $x\in X_\alpha$.
For each $n\in \omega$, we define  $\phi_n:\kappa\to \R$ by
$\phi_n(\alpha)=\varepsilon^\alpha_n$ for each $\alpha\in \kappa$.
Having the discrete topology on $\kappa$, functions $\phi_n$ are continuous.
Since $\phi_n\xrightarrow{\text{$\I$-p}}0$ and $\kappa<\bnumber_s(\I)$, we obtain that
$\phi_n\xrightarrow{\text{$\I$-qn}}0$ (by Proposition~\ref{thm:pointwise-implies-quasinormal:necessary-condition}(\ref{thm:pointwise-implies-quasinormal:necessary-condition:item})).
Thus, there is a sequence $(\varepsilon_n)$ of positive reals which is $\I$-convergent to zero and $B_\alpha = \{n\in \omega: |\phi_n(\alpha)|\geq \varepsilon_n\}\in \I$ for every $\alpha\in \kappa$.
We claim that the sequence $(\varepsilon_n)$ also witnesses 
$f_n\xrightarrow{\text{$\I$-qn}}0$.
Take any $x\in X$.
There is $\alpha<\kappa$ with $x\in X_\alpha$.
Then 
$
\{n\in \omega: |f_n(x)|\geq \varepsilon_n\}
\subseteq 
\{n:|f_n(x)|\geq \varepsilon^\alpha_n\land \varepsilon^\alpha_n<\varepsilon_n\}
\cup
\{n\in \omega: \varepsilon^\alpha_n\geq \varepsilon_n\}
\subseteq A_{x,\alpha}\cup B_\alpha\in \I
$.

(\ref{prop:sigma-uniform-convergence:restriction-to-less-than-bsigma-sets:item})
If $\kappa$ is finite, then the result is obvious. If $\kappa$ is infinite, then $\kappa\cdot \omega=\kappa$, so without loss of generality we can assume that 
$f_n\restriction X_\alpha \xrightarrow{\text{$\I$-u}}0$ for every $\alpha<\kappa$.
Now,  we define $A^\alpha_k=\{n\in \omega:\exists x\in X_\alpha\,(|f_n(x)|>\frac{1}{k+1})\}$ for every $\alpha<\kappa$ and $k\in \omega$.
Since $(A^\alpha_k)\in \cM_\I$ for every $\alpha<\kappa$ and $\kappa<\bs(\I)$, there is $(B_n)\in \cM_\I$ such that 
for each $\alpha<\kappa$ there is $k_\alpha\in \omega$ such that 
$A^\alpha_k\subseteq B_k$ for every $k\geq k_\alpha$.
For each $k\in \omega$, we define
$Y_k = \bigcup\{X_\alpha: k_\alpha=k\}$.
Then $X=\bigcup\{Y_k:k\in\omega\}$, and once we show that  $f_n\restriction Y_k  \xrightarrow{\text{$\I$-u}}0$ for each $k\in \omega$, the proof will be finished.
Take any $k\in \omega$ and $\varepsilon>0$. Let $i\in \omega$ be such that $\varepsilon>\frac{1}{i+1}$ and $i\geq k$.
Then 
$
\{n\in \omega: \exists x\in Y_k\,(|f_n(x)|\geq \varepsilon)\}
\subseteq 
\{n\in \omega: \exists x\in Y_k\,(|f_n(x)|\geq \frac{1}{i+1})\}
\subseteq 
\{n\in \omega: \exists \alpha<\kappa\, \exists x\in X_\alpha\,(k_\alpha=k\land |f_n(x)|\geq \frac{1}{i+1})\}
\subseteq B_i\in \I
$.
\end{proof}

\begin{theorem}
\label{thm:pointwise-implies-quasinormal:sufficient-condition-for-discrete-space}
\label{thm:pointwise-implies-sigma-uniform:sufficient-condition-for-discrete-space}
\label{thm:quasinormal-implies-sigma-uniform:sufficient-condition-for-discrete-space}
Let $\I,\J,\K$ be ideals on $\omega$.
Let $X$ be a \underline{discrete} topological space.
\begin{enumerate}

\item The following conditions are equivalent.\label{thm:pointwise-implies-quasinormal:sufficient-condition-for-discrete-space:item}
\begin{enumerate}
\item 
$f_n\xrightarrow{\text{$\I$-p}}0 \implies f_n\xrightarrow{\text{$\J$-qn}}0$
for any sequence $(  f_n) $ in $\cC(X)$.\label{thm:pointwise-implies-quasinormal:sufficient-condition-for-discrete-space:item-functions}
\item 
$|X|<\bb_s(\J,\J,\I)$.\label{thm:pointwise-implies-quasinormal:sufficient-condition-for-discrete-space:item-cardinal}
\end{enumerate}

\item The following conditions are equivalent.\label{thm:pointwise-implies-sigma-uniform:sufficient-condition-for-discrete-space:item}
\begin{enumerate}
\item 
$f_n\xrightarrow{\text{$\I$-p}}0 \implies f_n\xrightarrow{\text{$\K$-$\sigma$-u}}0$
for any sequence $(  f_n) $ in $\cC(X)$.\label{thm:pointwise-implies-sigma-uniform:sufficient-condition-for-discrete-space:item-functions}
\item   $|X|<\bs(\I,\K)$.\label{thm:pointwise-implies-sigma-uniform:sufficient-condition-for-discrete-space:item-cardinal}
\end{enumerate}

\item The following conditions are equivalent.\label{thm:quasinormal-implies-sigma-uniform:sufficient-condition-for-discrete-space:item}
\begin{enumerate}
\item 
$f_n\xrightarrow{\text{$\J$-qn}}0 \implies f_n\xrightarrow{\text{$\K$-$\sigma$-u}}0$
  for any sequence $(  f_n) $ in $\cC(X)$.\label{thm:quasinormal-implies-sigma-uniform:sufficient-condition-for-discrete-space:item-functions}
  \item 
 $|X|<\add_\omega(\J,\K)$.\label{thm:quasinormal-implies-sigma-uniform:sufficient-condition-for-discrete-space:item-cardinal}
\end{enumerate}

\end{enumerate}
\end{theorem}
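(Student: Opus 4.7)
The plan is to observe that in each of the three items, the direction from the cardinality bound to the convergence implication is a special case of Theorem~\ref{thm:pointwise-implies-quasinormal:necessary-condition}, which has already been established for an \emph{arbitrary} nonempty topological space. Only the converse directions require new work, and for them the hypothesis that $X$ is discrete is essential: every function $X \to \R$ is then automatically continuous, so the problem reduces to a purely combinatorial construction.

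For item~(\ref{thm:pointwise-implies-sigma-uniform:sufficient-condition-for-discrete-space:item}), assume $|X| \geq \bs(\I,\K)$ and pick a witnessing family $\cE = \{(E^x_k)_{k \in \omega} : x \in Y\} \subseteq \cM_\I$ for some $Y \subseteq X$ of cardinality $\bs(\I,\K)$, so that no $(A_k) \in \cM_\K$ satisfies $E^x_k \subseteq A_k$ for all sufficiently large $k$ uniformly in $x \in Y$. Define $f_n(x) = \frac{1}{k_n(x)+1}$ where $k_n(x) = \min\{k : n \in E^x_k\}$ for $x \in Y$, and $f_n(x) = 0$ otherwise (with the convention $\frac{1}{\infty+1} = 0$). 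Then $\{n : f_n(x) \geq \frac{1}{k+1}\} = E^x_k$ for each $x \in Y$, which gives $f_n \xrightarrow{\text{$\I$-p}} 0$. A hypothetical witness $X = \bigcup_m X_m$ for $f_n \xrightarrow{\text{$\K$-$\sigma$-u}} 0$ would yield $A^m_k = \{n : \exists x \in X_m\, (f_n(x) \geq \frac{1}{k+1})\} \in \K$, and the diagonal sequence $A_k := \bigcup_{m \leq k} A^m_k$ belongs to $\cM_\K$ and dominates $\cE$ in the required sense, contradicting the choice of $\cE$.

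For item~(\ref{thm:quasinormal-implies-sigma-uniform:sufficient-condition-for-discrete-space:item}), pick a witnessing family $\cA = \{A^x : x \in Y\} \subseteq \J$ of cardinality $\add_\omega(\J,\K)$ indexed by some $Y \subseteq X$, and set $f_n(x) = \chf_{A^x}(n)$ for $x \in Y$ and $f_n \equiv 0$ on $X \setminus Y$. Then $\varepsilon_n = \frac{1}{n+1}$ witnesses $f_n \xrightarrow{\text{$\J$-qn}} 0$, since the set $\{n : |f_n(x)| \geq \varepsilon_n\}$ differs from $A^x \in \J$ by a finite set. Any hypothetical $\K$-$\sigma$-uniform witness $X = \bigcup_m X_m$ would give $B_m = \{n : \exists x \in X_m\, (f_n(x) \geq \frac{1}{2})\} = \bigcup_{x \in X_m \cap Y} A^x \in \K$, so each $A^x \in \cA$ would lie inside some $B_m$, contradicting the choice of $\cA$.

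Item~(\ref{thm:pointwise-implies-quasinormal:sufficient-condition-for-discrete-space:item}) is the most delicate; the equivalence for discrete $X$ is essentially Staniszewski's characterization from \cite{MR3624786}, and the same overall strategy applies. Fix a witnessing family $\cE \subseteq \widehat{\cP}_\I$ of cardinality $\bnumber_s(\J,\J,\I)$ indexed by $Y \subseteq X$, and convert each partial partition $(E^x_i) \in \widehat{\cP}_\I$ into a function by setting $f_n(x) = \frac{1}{i+1}$ when $n \in E^x_i$ and $0$ otherwise. Then $f_n \xrightarrow{\text{$\I$-p}} 0$ because each $E^x_i \in \I$. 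A hypothetical $\J$-quasi-normal witness $(\varepsilon_n)$ induces a partition $(A_n) \in \cP_\J$ by grouping indices according to the level ranges of $\varepsilon_k$, and the iterated union $\bigcup_n(A_{n+1} \cap \bigcup_{i \leq n} E^x_i)$ appearing in the definition of $\bnumber_s$ matches, up to this grouping, the set $\{k : |f_k(x)| \geq \varepsilon_k\}$, which must lie in $\J$. The main obstacle is aligning the indexing of the $A_n$'s with the iterated union so that the contradiction falls out cleanly; this is the bookkeeping that Staniszewski's argument carries out.
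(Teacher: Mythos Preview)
Your proposal is correct and follows essentially the same approach as the paper. In each item you invoke Theorem~\ref{thm:pointwise-implies-quasinormal:necessary-condition} for the direction (b)$\Rightarrow$(a) and encode a witness family into a sequence of continuous functions on the discrete space for the converse, exactly as the paper does (the paper phrases item~(\ref{thm:pointwise-implies-sigma-uniform:sufficient-condition-for-discrete-space:item}) as a direct proof of (a)$\Rightarrow$(b) rather than via the contrapositive, and both you and the paper defer the bookkeeping in item~(\ref{thm:pointwise-implies-quasinormal:sufficient-condition-for-discrete-space:item}) to the cited references of \v{S}upina and Staniszewski).
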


\begin{proof}
(\ref{thm:pointwise-implies-quasinormal:sufficient-condition-for-discrete-space:item})
It follows from \cite[Theorems~5.1 and 6.2]{MR3423409} and \cite[Theorem~4.9(1)]{MR3624786} as the property $W(\J,\J,\I)$ from \cite{MR3624786} is equivalent to $\J$ being a "weak P($\I$)-ideal" from \cite{MR3423409}.

$(\ref{thm:pointwise-implies-sigma-uniform:sufficient-condition-for-discrete-space:item-functions}) \implies (\ref{thm:pointwise-implies-sigma-uniform:sufficient-condition-for-discrete-space:item-cardinal}) $ 
 Enumerate $X=\{x_\alpha: \alpha<|X|\}$ and fix any $\cE=\{(  E^\alpha_k) : \alpha<|X|\}\subseteq\cM_\I$. We need to show that $\cE$ is not a witness for $\bs(\I,\K)$, i.e.~there is  $(  A_k) \in\cM_\K$ such that for each $\alpha<|X|$ there is $m\in\omega$ such that $E^\alpha_k\subseteq A_k$ for all $k\geq m$. 

Define functions $f_n:X\to\mathbb{R}$ by:
$$f_n(x_\alpha)=
\begin{cases}
\frac{1}{k+1}, & \text{if }n\in E^\alpha_k\setminus E^\alpha_{k-1},\\
0, & \text{otherwise}\\
\end{cases}$$
 for every $\alpha<|X|$ (here we put $E^\alpha_{-1}=\emptyset$). 
Since $X$ is \emph{discrete}, functions $f_n$ are continuous for every $n$.
Observe that $f_n\xrightarrow{\text{$\I$-p}}0$, since for each $x\in X$ and $k\in\omega$ we have:
$$\left\{n\in\omega: |f_n(x)|\geq\frac{1}{k+1}\right\}=E^\alpha_k\in\I,$$
where $\alpha<|X|$ is given by $x=x_\alpha$.

By our assumption, $f_n\xrightarrow{\text{$\K$-$\sigma$-u}}0$. Thus, there is $(  X_m) \subseteq\cP(X)$ such that $\bigcup_m X_m=X$ and $f_n\restriction X_m\xrightarrow{\text{$\K$-u}}0$ for all $m\in\omega$, i.e.,
$$B_{m,k}=\left\{n\in\omega: \exists x\in X_m\, \left(|f_n(x)|\geq\frac{1}{k+1}\right)\right\}\in\K$$
for every $k,m\in\omega$.

Define $A_k=B_{0,k}\cup B_{1,k}\cup\ldots\cup B_{k,k}\in\K$ for all $k\in\omega$. Note that $A_k\subseteq B_{0,k+1}\cup B_{1,k+1}\cup\ldots\cup B_{k,k+1}\subseteq A_{k+1}$ for every $k\in\omega$. We claim that $(  A_k) \in\cM_\K$ is as needed, i.e., for each $\alpha<|X|$ there is $m\in\omega$ such that $E^\alpha_k\subseteq A_k$ for all $k\geq m$. 

Fix $\alpha<|X|$ and let $m\in\omega$ be such that $x_\alpha\in X_m$. Fix any $k\geq m$ and $n\in E^\alpha_k$. Then $f_n(x_\alpha)\geq\frac{1}{k+1}$. Since $x_\alpha\in X_m$ and $k\geq m$, $n\in B_{m,k}\subseteq B_{0,k}\cup B_{1,k}\cup\ldots\cup B_{k,k}=A_k$. As $n$ was arbitrary, we can conclude that $E^\alpha_k\subseteq A_k$. This finishes the proof.

$(\ref{thm:pointwise-implies-sigma-uniform:sufficient-condition-for-discrete-space:item-cardinal}) \implies (\ref{thm:pointwise-implies-sigma-uniform:sufficient-condition-for-discrete-space:item-functions})$
It follows from Theorem~\ref{thm:pointwise-implies-sigma-uniform:necessary-condition}(\ref{thm:pointwise-implies-sigma-uniform:necessary-condition:item}).

$(\ref{thm:quasinormal-implies-sigma-uniform:sufficient-condition-for-discrete-space:item-functions})
\implies 
(\ref{thm:quasinormal-implies-sigma-uniform:sufficient-condition-for-discrete-space:item-cardinal})
$
 Enumerate $X=\{x_\alpha: \alpha<|X|\}$ and fix any $\cA=\{ A_\alpha: \alpha<|X|\}\subseteq \J$. 
 We need to show that $\cA$ is not a witness for $\add_\omega(\J,\K)$, i.e.~there is  $\{B_k:k\in\omega\}\subseteq \K$ such that for each $\alpha<|X|$ there is $k\in\omega$ such that $A_\alpha\subseteq B_k$. 

We define functions $f_n:X\to\R$ by
$$f_n(x_\alpha)=
\chf_{A_\alpha}(n)$$
 for every $\alpha<|X|$. 
Since $X$ is \emph{discrete}, functions $f_n$ are continuous for every $n$.
Observe that $f_n\xrightarrow{\text{$\J$-qn}}0$.
Indeed, if we take any  sequence $(\varepsilon_n)$ of positive reals which is ordinary convergent to zero, then 
for each $x\in X$ there is $\alpha$ with $x=x_\alpha$ and 
$
\left\{n\in\omega: |f_n(x_\alpha)|\geq\varepsilon_n\right\}
=
\left\{n\in A_\alpha : |f_n(x_\alpha)|\geq\varepsilon_n\right\}
\cup
\left\{n\in\omega\setminus A_\alpha: |f_n(x_\alpha)|\geq\varepsilon_n\right\}
=
\left\{n\in A_\alpha : 1\geq\varepsilon_n\right\}
\cup
\left\{n\in\omega\setminus A_\alpha: 0\geq\varepsilon_n\right\}
\subseteq A_\alpha\cup \emptyset\in \J
$.

By our assumption, $f_n\xrightarrow{\text{$\K$-$\sigma$-u}}0$. Thus, there is a covering $\{X_k :k\in \omega\}$ of $X$  such that  $f_n\restriction X_k\xrightarrow{\text{$\K$-u}}0$ for all $k\in\omega$.

For each $k\in \omega$, we define 
$$B_k = \left\{n\in \omega: \exists x\in X_k\, \left(|f_n(x)|>\frac{1}{2}\right)\right\}.$$
We see that $B_k\in \K$ for each $k\in\omega$, and we claim that for every $A\in \cA$ there is $k$ with $A\subseteq B_k$.
Indeed, let $A\in \cA$. Let $\alpha$ be such that $A=A_\alpha$.
Then there is $k\in \omega$ such that $x_\alpha \in X_k$.
Let $n\in A_\alpha$. Then $f_n(x_\alpha)=1>1/2$, so $n\in B_k$.

$(\ref{thm:quasinormal-implies-sigma-uniform:sufficient-condition-for-discrete-space:item-cardinal})
\implies 
(\ref{thm:quasinormal-implies-sigma-uniform:sufficient-condition-for-discrete-space:item-functions})
$
It follows from Theorem~\ref{thm:quasinormal-implies-sigma-uniform:necessary-condition}(\ref{thm:quasinormal-implies-sigma-uniform:necessary-condition:item}).
\end{proof}

In \cite{MR1129696}, the authors proved that $\non(\text{$\Fin$-p,$\Fin$-qn})=\bnumber$ i.e.~the smallest size of non-QN-spaces equals $\bnumber$.
The following corollary is a counterpart of the above result which gives a purely combinatorial characterization of the topological cardinal characteristics $\non(\text{$\I$-p,$\I$-qn})$, $\non(\text{$\I$-p,$\I$-$\sigma$-u})$, $\non(\text{$\I$-qn,$\I$-$\sigma$-u})$
with the aid of other bounding-like numbers.

\begin{corollary}
\label{cor:quasinormal-versus-sigma-uniform:non}
\label{cor:pointwise-versus-quasinormal:non}
\label{cor:pointwise-versus-sigma-uniform:non}
    Let $\I$ be an ideal on $\omega$.
\begin{enumerate}

\item 
 $\non(\text{$\I$-p,$\I$-$\sigma$-u})=\bs(\I)$.\label{cor:pointwise-versus-sigma-uniform:non-equals-bsigma}

\item 
 $\non(\text{$\I$-p,$\I$-qn})=\bb_s(\I)$.\label{cor:pointwise-versus-quasinormal:non-equals-bs}

\item 
 $\non(\text{$\I$-qn,$\I$-$\sigma$-u})=\add_\omega(\I)$.\label{cor:quasinormal-versus-sigma-uniform:non-equals-add-omega}

\end{enumerate}
\end{corollary}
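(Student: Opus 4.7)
The plan is to obtain Corollary~\ref{cor:pointwise-versus-sigma-uniform:non} by combining two previously established results, applied with $\I=\J=\K$ throughout: the necessary conditions of Theorem~\ref{thm:pointwise-implies-quasinormal:necessary-condition}, which will give the lower bounds on $\non$, and the characterizations for discrete spaces in Theorem~\ref{thm:pointwise-implies-quasinormal:sufficient-condition-for-discrete-space}, which will give the upper bounds. All three items of the corollary are proved by the same template, with $\kappa$ standing for $\bs(\I)$, $\bb_s(\I)$, or $\add_\omega(\I)$, respectively, and with the pair $(\alpha,\beta)$ standing for $(\text{$\I$-p},\text{$\I$-$\sigma$-u})$, $(\text{$\I$-p},\text{$\I$-qn})$, or $(\text{$\I$-qn},\text{$\I$-$\sigma$-u})$, respectively.

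For the inequality $\non(\alpha,\beta)\geq\kappa$, I would fix an arbitrary normal space $X$ with $|X|<\kappa$ and argue that $X\in(\alpha,\beta)$. The relevant item of Theorem~\ref{thm:pointwise-implies-quasinormal:necessary-condition} (with $\I=\J=\K$, so that $\bs(\I,\I)=\bs(\I)$, $\bb_s(\I,\I,\I)=\bb_s(\I)$, and $\add_\omega(\I,\I)=\add_\omega(\I)$) produces the forward implication between the two convergences for every sequence in $\cC(X)$, while the reverse implication is supplied automatically by Proposition~\ref{prop:easy-implications-one-ideal}. Hence no normal space of size strictly less than $\kappa$ can distinguish $\alpha$ from $\beta$.

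For the inequality $\non(\alpha,\beta)\leq\kappa$, the case $\kappa=\infty$ is trivial, so I would assume $\kappa$ is an actual cardinal and take $X$ to be the discrete topological space on a set of cardinality $\kappa$. Since every discrete space is normal, $X$ is a legitimate competitor in the definition of $\non$. Because $|X|=\kappa$ does not satisfy $|X|<\kappa$, the relevant item of Theorem~\ref{thm:pointwise-implies-quasinormal:sufficient-condition-for-discrete-space} (again with $\I=\J=\K$) tells me that the forward implication between the two convergences must fail on some sequence in $\cC(X)$, so $X\notin(\alpha,\beta)$. This yields $\non(\alpha,\beta)\leq |X|=\kappa$.

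I do not expect any genuine obstacle here, since all the combinatorial and function-theoretic work has already been absorbed into the two cited theorems. The only points to verify carefully are the bookkeeping identifications $\bb_s(\I,\I,\I)=\bb_s(\I)$, $\bs(\I,\I)=\bs(\I)$, $\add_\omega(\I,\I)=\add_\omega(\I)$, which are exactly the shorthands introduced in the definition, and the trivial observation that a discrete space of the prescribed cardinality is always available and automatically normal.
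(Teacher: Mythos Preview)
Your proposal is correct and follows essentially the same approach as the paper: the lower bound on $\non$ comes from Theorem~\ref{thm:pointwise-implies-quasinormal:necessary-condition} together with the easy reverse implication (the paper cites Proposition~\ref{prop:sigma-uniform-implies-pointwise}, which in the case $\I=\J$ reduces to Proposition~\ref{prop:easy-implications-one-ideal}), and the upper bound comes from a discrete space of size $\kappa$ via Theorem~\ref{thm:pointwise-implies-quasinormal:sufficient-condition-for-discrete-space}. Your explicit treatment of the case $\kappa=\infty$ (relevant only for $\add_\omega(\I)$ when $\I$ is countably generated) and the remark that discrete spaces are normal are useful clarifications that the paper leaves implicit.
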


\begin{proof}
(\ref{cor:pointwise-versus-sigma-uniform:non-equals-bsigma})
The inequality $\non(\text{$\I$-p,$\I$-$\sigma$-u})\geq \bs(\I)$
 follows from Proposition~\ref{prop:sigma-uniform-implies-pointwise}
and Theorem~\ref{thm:pointwise-implies-sigma-uniform:necessary-condition}.
On the other hand, 
if $X$ is a discrete topological space of cardinality $\bs(\I)$, then 
by Theorem~\ref{thm:pointwise-implies-sigma-uniform:sufficient-condition-for-discrete-space}, $X$ is not in $(\text{$\I$-p,$\I$-$\sigma$-u})$.  
Consequently,  $\non(\text{$\I$-p,$\I$-$\sigma$-u})\leq \bs(\I)$.

Items (\ref{cor:pointwise-versus-quasinormal:non-equals-bs}) and (\ref{cor:quasinormal-versus-sigma-uniform:non-equals-add-omega}) can be proved in the same way.
\end{proof}

In Section~\ref{sec:spaces-of-arbitrary-cardinality},  we show that
 we \emph{cannot} add an item: ``there is no space  of cardinality $\bs(\I)$ in $(\text{$\I$-p,$\I$-$\sigma$-u})$'' in Corollary~\ref{cor:pointwise-versus-sigma-uniform:non} (in contrast with Corollary~\ref{cor:pointwise-versus-uniform}).


\section{Properties of cardinals describing minimal size of spaces  distinguishing convergence}
\label{sec:properties-of-bsigma}

In this section we will take a closer look on the cardinals $\bb_s(\I,\J,\K)$,  $\bs(\I,\J)$ and $\add_\omega(\I,\J)$.

The following easy proposition shows that these  cardinals are coordinate-wise monotone (increasing or decreasing depending on a coordinate).

\begin{proposition}
\label{prop:monotonicity-of-b-numbers}
	Let $\I,\I',\J,\J',\K,\K'$ be ideals on $\omega$.
	\begin{enumerate}
		\item If $\I\subseteq\I'$, then 
		$\bb_s(\I,\J,\K)\leq \bb_s(\I',\J,\K)$, 
  $\bs(\I,\J)\geq\bs(\I',\J)$ and $\add_\omega(\I,\J)\geq \add_\omega(\I',\J)$.
		\item If $\J\subseteq\J'$, then 
		$\bb_s(\I,\J,\K)\leq \bb_s(\I,\J',\K)$, 
  $\bs(\I,\J)\leq\bs(\I,\J')$ and $\add_\omega(\I,\J)\leq \add_\omega(\I,\J')$.
		\item If $\K\subseteq\K'$, then 
		$\bb_s(\I,\J,\K)\geq \bb_s(\I,\J,\K')$.
\end{enumerate}
\end{proposition}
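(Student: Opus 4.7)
The plan is to deduce all six inequalities from a single witness-transfer principle: a family witnessing one of the cardinals automatically witnesses the other under the stated ideal inclusion, because only the auxiliary classes $\widehat{\cP}_{(\cdot)}$, $\cP_{(\cdot)}$, $\cM_{(\cdot)}$, and $(\cdot)^{\omega}$ appearing in the definitions need to be compared, together with the trivial contrapositive $X\notin\I'\Rightarrow X\notin\I$ valid whenever $\I\subseteq\I'$.

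First I would record the obvious monotonicities of the auxiliary classes used in the definitions: $\I\subseteq\I'$ gives $\cM_\I\subseteq\cM_{\I'}$; $\J\subseteq\J'$ gives $\cP_\J\subseteq\cP_{\J'}$, $\cM_\J\subseteq\cM_{\J'}$, and $\J^\omega\subseteq(\J')^\omega$; and $\K\subseteq\K'$ gives $\widehat{\cP}_\K\subseteq\widehat{\cP}_{\K'}$. With these in hand, each of the six inequalities reduces to a one-line check. For the three inequalities with $\I\subseteq\I'$: a witness $\cE\subseteq\cM_\I$ for $\bs(\I,\J)$ automatically lies in $\cM_{\I'}$ and tests the same class $\cM_\J$, so it witnesses $\bs(\I',\J)$ and gives $\bs(\I,\J)\geq\bs(\I',\J)$; a witness $\cA\subseteq\I$ for $\add_\omega(\I,\J)$ already lies in $\I'$, giving $\add_\omega(\I,\J)\geq\add_\omega(\I',\J)$; and for the comparison involving $\bb_s$, the condition $\bigcup_{n}(A_{n+1}\cap\bigcup_{i\leq n}E_i)\notin\I'$ is strictly stronger than the same condition with $\I$ by the contrapositive, so any witness for $\bb_s(\I',\J,\K)$ is automatically a witness for $\bb_s(\I,\J,\K)$, yielding $\bb_s(\I,\J,\K)\leq\bb_s(\I',\J,\K)$.

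The three $\J$-coordinate inequalities go in the direction opposite to the $\I$-coordinate ones because $\J$ governs the universal quantifier over test sequences in each definition: enlarging $\J$ enlarges the family of instances that a witness must defeat, so any witness for the $\J'$-version is \emph{a fortiori} a witness for the $\J$-version. Finally, for part~(3), a witness $\cE\subseteq\widehat{\cP}_\K$ for $\bb_s(\I,\J,\K)$ sits inside $\widehat{\cP}_{\K'}$ by the monotonicity noted above and satisfies the unchanged defining condition, so it witnesses $\bb_s(\I,\J,\K')$ and gives $\bb_s(\I,\J,\K)\geq\bb_s(\I,\J,\K')$. I do not foresee any genuine obstacle; the whole proposition is a bookkeeping consequence of the definitions and of the monotonicity of the auxiliary families.
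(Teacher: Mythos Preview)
Your proposal is correct and is exactly the routine witness-transfer argument one expects; the paper itself gives no proof beyond calling the proposition ``easy,'' so your write-up is in fact more detailed than the paper's treatment.
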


The following theorem reveals the relationship between the considered cardinals. 

\begin{theorem}
\label{thm:bsigma-leq-min-bs-and-add-omega}
\label{thm:bsigma-leq-add-omega}
\label{thm:bsigma-leq-bs}
Let $\I,\J$ be ideals on $\omega$.
\begin{enumerate}
    \item 
$\bs(\I,\J) = \min\{ \bb_s(\I\cap \J,\J,\I), \add_\omega(\I,\J)\}$.\label{thm:bsigma-leq-min-bs-and-add-omega:item}
   
\item 
$\bs(\I) = \min\{\bb_s(\I),\add_\omega(\I)\}$.\label{thm:bsigma-leq-min-bs-and-add-omega:item-for-one-ideal}

\end{enumerate}
\end{theorem}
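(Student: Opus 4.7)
The plan is to prove item~(\ref{thm:bsigma-leq-min-bs-and-add-omega:item}) by two inequalities; item~(\ref{thm:bsigma-leq-min-bs-and-add-omega:item-for-one-ideal}) then follows by specializing to $\J=\I$, since $\I\cap\J=\I$ in that case. For the upper bound $\bs(\I,\J)\leq\add_\omega(\I,\J)$, I would turn any witness $\{A_\alpha:\alpha<\add_\omega(\I,\J)\}\subseteq\I$ into a witness for $\bs(\I,\J)$ via the constant sequences $E^\alpha_k:=A_\alpha$: since any $(B_k)\in\cM_\J$ lies in $\J^\omega$, some $A_\alpha$ is not contained in any $B_k$. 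For the upper bound $\bs(\I,\J)\leq\bb_s(\I\cap\J,\J,\I)$, given a witness $\{(E^\alpha_i):\alpha<\bb_s(\I\cap\J,\J,\I)\}\subseteq\widehat{\cP}_\I$, I would form the increasing envelopes $F^\alpha_n:=\bigcup_{i\leq n}E^\alpha_i\in\cM_\I$; for any $(C_n)\in\cM_\J$, after first extending to $(C'_n)\in\cM_\J$ that covers $\omega$ (absorbing the leftover in finite chunks, which only strengthens the task), I would form the partition $A_0:=C'_0$, $A_{n+1}:=C'_{n+1}\setminus C'_n$ in $\cP_\J$ and invoke the hypothesis to find $\alpha$ with $T^\alpha:=\bigcup_n(A_{n+1}\cap F^\alpha_n)\notin\I\cap\J$. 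Then $F^\alpha_n\subseteq C'_n$ for all $n\geq N$ would force $T^\alpha$ to be a finite union bounded in both $\I$ and $\J$, contradicting this choice.

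The main obstacle is the lower bound $\bs(\I,\J)\geq\min\{\bb_s(\I\cap\J,\J,\I),\add_\omega(\I,\J)\}$, which genuinely uses both cardinals at once. Fix $\kappa$ strictly below both bounds and a family $\{(E^\alpha_k):\alpha<\kappa\}\subseteq\cM_\I$. I would first disjointify via $D^\alpha_0:=E^\alpha_0$, $D^\alpha_{k+1}:=E^\alpha_{k+1}\setminus E^\alpha_k$, obtaining sequences in $\widehat{\cP}_\I$ with $\bigcup_{i\leq k}D^\alpha_i=E^\alpha_k$. Since $\kappa<\bb_s(\I\cap\J,\J,\I)$, I get a partition $(A_n)\in\cP_\J$ with error sets $S^\alpha:=\bigcup_n(A_{n+1}\cap E^\alpha_n)\in\I\cap\J$ for every $\alpha$. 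Setting $C_k:=A_0\cup\dots\cup A_k\in\cM_\J$, a direct calculation tracking which block contains a missed element yields $E^\alpha_k\setminus C_k\subseteq S^\alpha$ for every $k$; this is close but not enough, since $S^\alpha$ is merely small rather than empty. To finish, I would apply $\add_\omega(\I,\J)$ to $\{S^\alpha:\alpha<\kappa\}\subseteq\I$, obtaining an increasing $(B_n)\in\J^\omega$ with each $S^\alpha\subseteq B_{n(\alpha)}$, and take $C'_k:=C_k\cup B_k\in\cM_\J$. Then for $k\geq n(\alpha)$, $E^\alpha_k\setminus C'_k\subseteq S^\alpha\setminus B_k=\emptyset$, so the family fails to witness $\bs(\I,\J)$, giving $\kappa<\bs(\I,\J)$.

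The conceptual hurdle is precisely this two-step combination: $\bb_s$ produces a partition in $\cP_\J$ with error sets in $\I\cap\J$, but $\bs$ demands \emph{actual} eventual inclusion; $\add_\omega$ is the tool that absorbs the residual $\I$-sets into the $\cM_\J$-sequence, explaining why the minimum of the two cardinals appears on the right-hand side.
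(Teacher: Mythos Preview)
Your proposal is correct and follows essentially the same route as the paper's proof: for the upper bounds you use constant sequences (for $\add_\omega$) and increasing envelopes of a $\widehat{\cP}_\I$-witness together with the passage from $\cM_\J$ to $\cP_\J$ (for $\bb_s$); for the lower bound you disjointify, apply $\bb_s(\I\cap\J,\J,\I)$ to obtain a partition with error sets $S^\alpha\in\I\cap\J$, and then use $\add_\omega(\I,\J)$ to absorb those error sets into the $\cM_\J$-sequence. The only differences from the paper are cosmetic (you extend $(C_n)$ to cover $\omega$ where the paper assumes $n\in A_n$, and you phrase the key step as $E^\alpha_k\setminus C_k\subseteq S^\alpha$ where the paper writes the equivalent inclusion $E^\alpha_n\subseteq\bigcup_{i\leq n}B_i\cup G_\alpha$).
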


\begin{proof}
(\ref{thm:bsigma-leq-min-bs-and-add-omega:item}, $\leq$)
First, we show $\bs(\I,\J)\leq \bb_s(\I\cap \J,\J,\I)$. 
Let $\cE = \{(  E^\alpha_n:n\in \omega) :\alpha<\bs(\I,\J)\}$ be a ``witness'' for $\bnumber_s(\I\cap \J,\J,\I)$ i.e.~$(  E^\alpha_n:n\in \omega) \in \widehat{\cP}_\I$ for every $\alpha$ and
for every $(  A_n)  \in \cP_\J$ there is $\alpha$ with $\bigcup_{n\in\omega}(A_{n+1}\cap \bigcup_{i\leq n}E^\alpha_i)\notin\I\cap \J$.
For every $\alpha<\bnumber_s(\K,\J,\I)$ and $n\in \omega$, we define
$F^\alpha_n = \bigcup_{i\leq n}E^\alpha_i$.
Then $(  F^\alpha_n:n\in \omega) \in \cM_\I$, and we claim that $\{(  F^\alpha_n) :\alpha<\bnumber_s(\I\cap \J,\J,\I)\}$ is a ``witness'' for $\bs(\I,\J)$ i.e.~for every $(  A_n) \in \cM_\J$ there is $\alpha$ such that  $F^\alpha_n\not\subseteq A_n$
for infinitely many $n$.
Indeed, take any  $(  A_n) \in \cM_\J$. Without loss of generality, we can assume that $n\in A_n$ for every $n\in \omega$.
We define $B_0=A_0$ and $B_n=A_n\setminus A_{n-1}$ for  $n\geq 1$.
Then $(  B_n) \in \cP_\J$, so there is $\alpha$ with  $\bigcup_{n\in\omega}(B_{n+1}\cap \bigcup_{i\leq n}E^\alpha_i)\notin\I\cap \J$.
Now, suppose for sake of contradiction that 
$F^\alpha_n\subseteq A_n$ for almost all $n\in \omega$, say for all $n>n_0$.
Then $B_{n+1}\cap F^\alpha_n=\emptyset$ for every $n>n_0$.
Consequently, $B_{n+1}\cap \bigcup_{i\leq n}E^\alpha_i =\emptyset$ for every $n>n_0$. Thus, 
$\bigcup_{n\in\omega}(B_{n+1}\cap \bigcup_{i\leq n}E^\alpha_i) 
\subseteq 
\bigcup_{n\leq n_0}(B_{n+1}\cap \bigcup_{i\leq n}E^\alpha_i)
\in \I\cap \J$, a contradiction.

Second, we show $ \bs(\I,\J)\leq \add_\omega(\I,\J)$.
Let $\cA=\{A_\alpha:\alpha<  \add_\omega(\I,\J)\}$ be a ``witness'' for $\add_\omega(\I,\J)$ i.e.~$A_\alpha\in \I$ for every $\alpha$ and for every $\{B_n:n\in \omega\}\subseteq \J$ there is $\alpha$ such that $A_\alpha\not\subseteq B_n$ for every $n\in \omega$.
For every $\alpha<\add_\omega(\I,\J)$  and $n\in \omega$, we define  $E^\alpha_n=A_\alpha$.
Then $(  E^\alpha_n:n\in\omega) \in \cM_\I$, and we claim that 
$\{(  E^\alpha_n:n\in\omega) :\alpha<\add_\omega(\I,\J)\}$ is a ``witness'' for $\bs(\I,\J)$ i.e.~for every $(  B_n) \in \cM_\J$ there is $\alpha$ such that $E^\alpha_n\not\subseteq B_n$ for infinitely many $n$.
Indeed, take any $(  B_n) \in \cM_\J$ then $\{B_n:n\in \omega\}\subseteq\J$, so there is $\alpha$
such that $A_\alpha\not\subseteq B_n$ for every $n\in \omega$.
Since $E^\alpha_n=A_\alpha$ for every $n$, we obtain $E^\alpha_n\not\subseteq B_n$ for every $n\in \omega$.

(\ref{thm:bsigma-leq-min-bs-and-add-omega:item}, $\geq$)
Let $\kappa < \min\{ \bb_s(\I\cap \J,\J,\I), \add_\omega(\I,\J)\}$.
If we show that $\kappa <\bs(\I,\J)$, the proof will be finished.
We take any $\cE=\{(  E^\alpha_n:n\in\omega) :\alpha<\kappa\}\subseteq\cM_\I$ and need to find $(  A_n) \in \cM_\J$ such that for every $\alpha<\kappa$ we have $E^\alpha_n\subseteq A_n$ for all but finitely many $n\in \omega$.
For every $\alpha<\kappa$ and $n\in \omega$, we define $F^\alpha_n=E^\alpha_n\setminus \bigcup_{i<n}E^\alpha_i$.
Since $(  F^\alpha_n:n\in\omega)  \in \widehat{\cP}_\I$ for every $\alpha<\kappa$ and $\kappa<\bnumber_s(\I\cap \J,\J,\I)$, we obtain $(  B_n:n\in \omega)  \in \cP_\J$ such that  $G_\alpha = \bigcup_{n<\omega}(B_{n+1}\cap E^\alpha_n) =\bigcup_{n<\omega}(B_{n+1}\cap \bigcup_{i\leq n}F^\alpha_i)\in \I\cap\J$
for every $\alpha$.
Since $G_\alpha\in \I$ for every $\alpha<\kappa$ and $\kappa< \add_\omega(\I,\J)$, we obtain $(  C_n:n\in \omega)  \in \J^\omega$ such that for every $\alpha<\kappa$ there is $n_\alpha\in \omega$ with $G_\alpha\subseteq C_{n_\alpha}$. 
For every $n\in \omega$, we define $A_n = \bigcup_{i\leq n}(B_i\cup C_i)$.
Then $(  A_n:n\in \omega)  \in \cM_\J$ and we claim that 
for every $\alpha<\kappa$
 we have $E^\alpha_n\subseteq A_n$   for all but finitely many $n\in \omega$.
 Indeed, take any $\alpha<\kappa$ and notice that 
$$E^\alpha_n 
\subseteq 
\bigcup_{i\leq n} B_i \cup \bigcup_{k\geq n}(B_{k+1}\cap E^\alpha_k)
\subseteq 
\bigcup_{i\leq n} B_i \cup G_\alpha
\subseteq 
\bigcup_{i\leq n} B_i \cup \bigcup_{i\leq n}C_i = A_n
$$
for  every $n\geq n_\alpha$.

(\ref{thm:bsigma-leq-min-bs-and-add-omega:item-for-one-ideal})
It follows from item (\ref{thm:bsigma-leq-min-bs-and-add-omega:item}), but one could also show it ``topologically'' by using Corollaries~\ref{cor:only-one-class}(\ref{cor:only-one-class:nonuniform-non}) and \ref{cor:pointwise-versus-sigma-uniform:non}.
\end{proof}

The following proposition reveals some bounds for the considered cardinals.
In this proposition we use some known cardinals considered in the literature so far which we define first.

For any ideal $\I$, we define
$$\adds(\I)= \min\{|\cA|:\cA\subseteq \I\land \forall B\in  \I\, \exists A\in \cA\,(|A\setminus B|=\omega)\}.$$

For $f,g\in \omega^\omega$ we write $f\leq^*g$ if $f(n)\leq g(n)$ for all but finitely many $n\in \omega$.
The \emph{bounding number} $\bnumber$ is the smallest size of $\leq^*$-unbounded subset of $\omega^\omega$:
$$\bnumber = \min\{|\cF|: \cF\subseteq\omega^\omega  \land  \neg(\exists g\in \omega^\omega\, \forall f\in \cF\, (f\leq^* g))\}.$$

\begin{proposition}
\label{prop:bounds-for-bs}
\label{prop:bounds-for-add-omega}
\label{prop:bounds-for-bsigma}
\label{prop:bounds-for-bsigma-bs}
\label{prop:bsigma-is-regular}
\label{prop:bsigma-has-uncountable-cofinality}
 Let $\I,\J,\K$ be ideals on $\omega$.
\begin{enumerate}

\item \label{prop:bounds-for-bs:item}
\begin{enumerate}
\item If $\I\not\subseteq\J$, then $\bb_s(\I\cap \J,\J,\I)=1$.
\item If $\I\subseteq\J$, then $\bb_s(\I\cap \J,\J,\I)\geq \omega_1$.\label{prop:bounds-for-bs:item:bs-uncountable}
\item $\omega_1\leq\bb_s(\I)\leq \continuum$.\label{prop:bounds-for-bs:item:bs-is-leq-continuum}

\item $\bnumber_s(\Fin,\J,\Fin)=\bnumber$\label{prop:bounds-for-bs:item:equals-b}.
\end{enumerate}

\item \label{prop:bounds-for-add-omega:item}
\begin{enumerate}
\item If $\I\not\subseteq \J$, then $\add_\omega(\I,\J)=1$.\label{prop:bounds-for-add-omega:item:notincluded-equal-one}
\item If $\I\subseteq \J$, then $\add_\omega(\I,\J)\geq\max \{\omega_1,\adds(\I)\}$.\label{prop:bounds-for-add-omega:item:included-geq-omega-one}
\label{prop:bounds-for-add-omega:item:add-omega-geq-add-star}

\item
$\add_\omega(\I)<\infty \iff \I \text{ is not countably generated}$.\label{prop:bounds-for-add-omega:item:add-omega-finite}

\end{enumerate}
     
\item \label{prop:bounds-for-bsigma:item}
\begin{enumerate}
\item $\bs(\fin,\J)=\bb$.\label{prop:bounds-for-bsigma:item:FIN}
\item If $\I\not\subseteq\J$ then $\bs(\I,\J)=1$.\label{prop:bounds-for-bsigma:item:one}

\item If $\I\subseteq\J$ then $\omega_1\leq \bs(\I,\J)\leq\bb$.\label{prop:bounds-for-bsigma:item:leq-b}\label{prop:bsigma-has-uncountable-cofinality:item}\label{prop:bounds-for-bsigma:item:geq-omega-one} 
\item If $\I\subseteq\J$ then $\cf(\bs(\I,\J))\geq\omega_1$.\label{prop:bounds-for-bsigma:item:bsigma-has-uncountable-cofinality}

\end{enumerate}

\item $\bs(\I)\geq\bb_s(\fin,\I,\I)=\min \{\bb,\adds(\I)\}$.\label{prop:bounds-for-bs:item:equals-min-b-and-adds}\label{prop:bounds-for-bsigma-bs:item}

\end{enumerate}
\end{proposition}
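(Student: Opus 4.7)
My plan is to establish two claims: (A) the equality $\bb_s(\fin, \I, \I) = \min\{\bb, \adds(\I)\}$, and (B) the lower bound $\bs(\I) \geq \bb_s(\fin, \I, \I)$, where (B) follows from (A) together with Theorem~\ref{thm:bsigma-leq-min-bs-and-add-omega}(\ref{thm:bsigma-leq-min-bs-and-add-omega:item-for-one-ideal}), Proposition~\ref{prop:monotonicity-of-b-numbers}, and Proposition~\ref{prop:bounds-for-add-omega}(\ref{prop:bounds-for-add-omega:item:add-omega-geq-add-star}): indeed $\bs(\I) = \min\{\bb_s(\I), \add_\omega(\I)\}$, monotonicity yields $\bb_s(\fin, \I, \I) \leq \bb_s(\I)$ since $\fin\subseteq\I$, and (A) combined with $\adds(\I) \leq \add_\omega(\I)$ yields $\bb_s(\fin, \I, \I) \leq \add_\omega(\I)$. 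For (A), I will verify three sub-inequalities.

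For $\bb_s(\fin, \I, \I) \leq \adds(\I)$ I take an $\adds(\I)$-witness $\{A_\alpha : \alpha < \adds(\I)\} \subseteq \I$ and consider the trivial sequences $(E^\alpha_n)_n$ with $E^\alpha_0 = A_\alpha$ and $E^\alpha_n = \emptyset$ for $n \geq 1$. For any $(B_n) \in \cP_\I$ the relevant union collapses to $A_\alpha \cap (\omega \setminus B_0) = A_\alpha \setminus B_0$, and the $\adds$-property applied with $B_0 \in \I$ furnishes some $\alpha$ for which this is infinite. For $\bb_s(\fin, \I, \I) \leq \bb$ I take an unbounded family $\{f_\alpha : \alpha < \bb\} \subseteq \omega^\omega$ of strictly increasing functions and use $E^\alpha_n = [f_\alpha(n), f_\alpha(n+1))$, so that $\bigcup_{i \leq n} E^\alpha_i = [f_\alpha(0), f_\alpha(n+1))$. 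Given $(B_n) \in \cP_\I$, after merging empty blocks into $B_0$ I may assume $B_j \neq \emptyset$ for every $j \geq 1$; then setting $g(j) = \min B_j$, unboundedness yields $\alpha$ for which $f_\alpha(j) > \min B_j$ holds for infinitely many $j \geq 1$. For all but finitely many such $j$ one also has $\min B_j \geq f_\alpha(0)$, so $\min B_j \in B_j \cap [f_\alpha(0), f_\alpha(j))$ contributes a distinct element to the targeted union, which is therefore infinite.

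For $\bb_s(\fin, \I, \I) \geq \min\{\bb, \adds(\I)\}$ I fix $\kappa$ strictly below this minimum and an arbitrary $\cE = \{(E^\alpha_n) : \alpha < \kappa\} \subseteq \widehat{\cP}_\I$, and construct a partition $(B_l) \in \cP_\I$ defeating every $\alpha$. Let $F^\alpha_n = \bigcup_{i \leq n} E^\alpha_i \in \I$. Since $\{F^\alpha_n : \alpha < \kappa, n \in \omega\} \subseteq \I$ has cardinality at most $\kappa < \adds(\I)$, there exists $B \in \I$ with $F^\alpha_n \subseteq^* B$ for all $\alpha, n$. Set $b^\alpha_n = \max(F^\alpha_n \setminus B)$ (with $b^\alpha_n = 0$ by convention when the set is empty); since $\kappa < \bb$, there is a strictly increasing $g \in \omega^\omega$ with $(b^\alpha_n)_n \leq^* g$ for every $\alpha$. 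I then define
\[
B_0 = B \cup [0, g(0)], \qquad B_l = (g(l-1), g(l)] \setminus B \quad (l \geq 1),
\]
obtaining a partition of $\omega$ into $\I$-sets ($B_0 \in \I$ as a union of an $\I$-set and a finite set; each $B_l$ with $l \geq 1$ is finite). For each fixed $\alpha$ and $l$ large enough that $b^\alpha_{l-1} \leq g(l-1)$, any $k \in B_l$ satisfies $k > g(l-1) \geq b^\alpha_{l-1}$ and $k \notin B$, which forces $k \notin F^\alpha_{l-1} \setminus B$ and hence $k \notin F^\alpha_{l-1}$; consequently $\bigcup_n (B_{n+1} \cap F^\alpha_n)$ is contained in finitely many finite blocks and is finite.

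The main obstacle will be this last sub-inequality. The delicate point is that the sets $\bigcup_n E^\alpha_n$ need not belong to $\I$, so $\adds(\I)$ cannot be applied to them directly; one must instead work with the increasing partial unions $F^\alpha_n \in \I$, and then use $\bb$ to control the finite overflow maxima $b^\alpha_n = \max(F^\alpha_n \setminus B)$, producing a stratified partition whose $l$-th block lies above $g(l-1)$ and thereby automatically escapes $F^\alpha_{l-1}$ eventually.
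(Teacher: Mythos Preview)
Your argument is essentially correct, and it differs from the paper in interesting ways on both fronts.

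For the equality $\bb_s(\fin,\I,\I)=\min\{\bb,\adds(\I)\}$, the paper simply cites an external reference (\cite[Theorem~4.8]{MR4472525}), whereas you supply a self-contained proof. Your three sub-inequalities are all sound; the reduction to nonempty blocks in the $\bb$-upper-bound argument is justified because reindexing to skip empty blocks only shrinks the target union (since $j_{k+1}\geq k+1$ implies $F^\alpha_k\subseteq F^\alpha_{j_{k+1}-1}$), and in the lower-bound argument the stratified partition $B_0=B\cup[0,g(0)]$, $B_l=(g(l-1),g(l)]\setminus B$ does exactly what is needed. One minor slip: the family $\{F^\alpha_n:\alpha<\kappa,\,n\in\omega\}$ has cardinality at most $\max(\kappa,\omega)$, not $\kappa$. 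When $\adds(\I)\geq\omega_1$ this is harmless, but when $\adds(\I)=\omega$ (the non-P-ideal case) you are reduced to showing $\bb_s(\fin,\I,\I)\geq\omega$, i.e.\ that no finite $\cE$ witnesses the cardinal; this follows at once from the diagonal partition $A_n=(\bigcup_{\alpha<k}E^\alpha_n\cup\{n\})\setminus\bigcup_{m<n}A_m$, which makes every $A_{n+1}\cap F^\alpha_n$ empty.

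For the inequality $\bs(\I)\geq\bb_s(\fin,\I,\I)$, the paper gives a direct conversion: it takes a witness $\{(E^\alpha_n)\}\subseteq\cM_\I$ for $\bs(\I)$, differences to get $F^\alpha_n=E^\alpha_n\setminus E^\alpha_{n-1}\in\widehat{\cP}_\I$, and verifies that this family witnesses $\bb_s(\fin,\I,\I)$ by passing from a given $(A_n)\in\cP_\I$ to its cumulative unions $(B_n)\in\cM_\I$. Your route is instead structural: from $\bs(\I)=\min\{\bb_s(\I),\add_\omega(\I)\}$, monotonicity in the first coordinate gives $\bb_s(\fin,\I,\I)\leq\bb_s(\I)$, and your equality together with $\adds(\I)\leq\add_\omega(\I)$ gives $\bb_s(\fin,\I,\I)\leq\add_\omega(\I)$. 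This is cleaner once (A) is in hand, but relies on the nontrivial Theorem~\ref{thm:bsigma-leq-min-bs-and-add-omega}; the paper's direct argument is independent of that theorem and would work even without the decomposition $\bs(\I)=\min\{\bb_s(\I),\add_\omega(\I)\}$.
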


\begin{proof}
(\ref{prop:bounds-for-bs:item}) 
See \cite[Proposition~3.13 and Theorem~4.2]{MR4472525}.

(\ref{prop:bounds-for-add-omega:item:notincluded-equal-one})
Let $E\in \I\setminus \J$. 
Let $\cE=\{E\}$ and take any $(A_n)\in \cM_\J$. Then $E\not\subseteq A_n$ for every $n\in \omega$ (otherwise, $E\subseteq A_n\in \J$ would imply $E\in \J$). Thus, $\add_\omega(\I,\J)\leq 1$.

(\ref{prop:bounds-for-add-omega:item:included-geq-omega-one})
The inequality $\add_\omega(\I,\J)\geq\omega_1$ will follow from item (\ref{prop:bounds-for-bsigma:item:leq-b}) and Theorem~\ref{thm:bsigma-leq-bs}. To show that $\add_\omega(\I,\J)\geq\adds(\I)$, let $\cA\subseteq\I$ be a witness for $\add_\omega(\I,\J)$. We claim that $\cA$ is also a witness for $\adds(\I)$.
Indeed, take any $B\in \I$. Let $\Fin=\{F_n:n\in \omega\}$ and define $B_n=B\cup F_n$ for every $n\in \omega$.
Since $\I\subseteq\J$, we have  $(B_n)\in [\J]^\omega$. Consequently,  there is $A\in \cA$ such that $A\not\subseteq B_n=B\cup F_n$ for any $n\in \omega$.
Thus, $|A\setminus B|=\omega$.

(\ref{prop:bounds-for-add-omega:item:add-omega-finite}) 
Straightforward.

(\ref{prop:bounds-for-bsigma:item:FIN})
The inequality  $\bs(\fin,\J)\leq \bnumber$ follows from item (\ref{prop:bounds-for-bs:item:equals-b}) and Theorem~\ref{thm:bsigma-leq-bs}. Below we show $\bnumber\leq \bs(\fin,\J)$. 
Using Proposition~\ref{prop:monotonicity-of-b-numbers}, we see that it is enough to show $\bnumber\leq \bs(\fin)$.
Fix any $\cE=\{(  E^\alpha_k) : \alpha<\bs(\fin)\}\subseteq\cM_\fin$ which is a witness for $\bs(\fin)$. For each $\alpha<\bs(\fin)$, we  define a function $f_\alpha\in\omega^\omega$ by $f_\alpha(k)=\max E^\alpha_k$. 
We claim that $\{f_\alpha:  \alpha<\bs(\fin)\}$ is $\leq^*$-unbounded subset of $\omega^\omega$. Fix any $g\in\omega^\omega$. We want to find $ \alpha<\bs(\fin)$ such that $f_\alpha\not\leq^* g$. Without loss of generality we may assume that $g$ is increasing. Define $A_k=\{i\in\omega: i\leq g(k)\}$ for all $k\in\omega$. Then $(  A_k) \in\cM_\fin$. Since $\cE$ is a witness for $\bs(\fin)$, there is $ \alpha<\bs(\fin)$ such that $E^\alpha_k\not\subseteq A_k$ for infinitely many $k\in\omega$. Observe that $E^\alpha_k\not\subseteq A_k$ implies $g(k)<f_\alpha(k)$. Hence, $g(k)<f_\alpha(k)$ for infinitely many $k\in\omega$, which means that  $f_\alpha\not\leq^\star g$.

(\ref{prop:bounds-for-bsigma:item:one}) 
It follows from item (\ref{prop:bounds-for-add-omega:item:notincluded-equal-one}) and Theorem~\ref{thm:bsigma-leq-bs}.

(\ref{prop:bounds-for-bsigma:item:geq-omega-one})
The inequality $\bs(\I,\J)\leq\bb$ follows from item (\ref{prop:bounds-for-bsigma:item:FIN}) and Proposition~\ref{prop:monotonicity-of-b-numbers}. Below we show $\bs(\I,\J)\geq\omega_1$. 

Fix any $\{(  E^n_k) : k\in\omega\}\subseteq\cM_\I$. We will find $(  A_k) \in\cM_\J$ such that $\{k\in\omega: E^n_k\not\subseteq A_k\}\in\fin$ for all $n\in\omega$. 

Define $A_k=E^0_k\cup E^1_k\cup\ldots\cup E^k_k$ for all $k\in\omega$. Then $A_k\in\I\subseteq\J$ and $A_k\subseteq E^0_{k+1}\cup E^1_{k+1}\cup\ldots\cup E^k_{k+1}\subseteq A_{k+1}$ as $(  E^n_k) \in\cM_\I$ for each $n\in\omega$. Moreover, for each $n\in\omega$ and $k\geq n$ we have $E^n_k\subseteq A_k$. Hence, $(  A_k) \in\cM_\J$ is as needed.

(\ref{prop:bounds-for-bsigma:item:bsigma-has-uncountable-cofinality})
Let $\cE$ be a witness for $\bs(\I,\J)$ i.e.~$|\cE|=\bs(\I,\J)$, $\cE\subseteq\cM_\I$ and for every $(A_n)\in \cM_\J$ there is $(E_n)\in \cE$ such that $E_n\not\subseteq A_n$ for infinitely many $n\in \omega$.
Now, suppose for sake of contradiction that $\cf(\bs(\I,\J))=\omega$.
Using the properties of cofinality, we know that $\cE$ can be decomposed  into the union of  countably many subfamilies $\cE_k$ of cardinalites less than $\bs(\I,\J)$.
Since $|\cE_k|<\bs(\I,\J)$, there is $(A_n^k)\in \cM_\J$ such that for every $(E_n)\in \cE_k$ we have $E_n\subseteq A_n^k$ for all but finitely many $n\in \omega$.
Then $\cA=\{(A_n^k):k\in \omega\}\subseteq\cM_\J$ and $|\cA|\leq \omega<\bs(\J)$ (by item (\ref{prop:bounds-for-bsigma:item:geq-omega-one})), so there is $(B_n)\in \cM_\J$ such that for every  $k \in\omega$ we have  $A_n^k \subseteq B_n$ for all but finitely many $n\in \omega$.
Consequently, for every $(E_n)\in \cE$ we have $E_n\subseteq B_n$ for all but finitely many $n\in \omega$, a contradiction with the choice of the family $\cE$.

(\ref{prop:bounds-for-bsigma-bs:item})
The equality $\bb_s(\fin,\I,\I)=\min\{\bb,\adds(\I)\}$ is shown in \cite[Theorem~4.8]{MR4472525}. Below we show that $\bs(\I)\geq\bb_s(\fin,\I,\I)$.

Let $\cE=\{\{ E^\alpha_n:n\in \omega\}: \alpha<\bs(\I)\}\subseteq\cM_\I$ be a witness for $\bs(\I)$. 
We define $F^\alpha_0=E^\alpha_0$ and 
$F_n^\alpha = E^\alpha_n\setminus E^\alpha_{n-1}$
for every $\alpha<\bs(\I)$ and $n\geq 1$.
Then $\cF=\{F^\alpha_n:n\in \omega\}:\alpha<\bs(\I)\}\subseteq \widehat{\cP}_\I$, and we claim that $\cF$ is a witness for $\bnumber_s(\Fin,\I,\I)$.
Indeed, take any $(A_n)\in \cP_\I$. For every $n\in \omega$, we define $B_n=\bigcup_{i\leq n}A_i$. Then $(B_n)\in \cM_\I$, so there exists $\alpha$ such that $E^\alpha_n\not\subseteq B_n$ for infinitely many $n$.
Let $(k_n)$ be a strictly increasing sequence such that 
$E^\alpha_{k_n}\not\subseteq B_{k_n}$ for every $n\in \omega$.
Thus, for every $n\in \omega$ there is $l_n>k_n$ and $a_n\in A_{l_n}\cap E^\alpha_{k_n}$.
Then $A=\{a_n:n\in\omega\}$ is infinite.
If we show  that $A\subseteq \bigcup_{n<\omega}(A_{n+1}\cap \bigcup_{i\leq n}F^\alpha_n)$, the proof will be finished.
Take any $a_n\in A$. Then
$a_n\in A_{l_n}\cap E^\alpha_{k_n} = A_{l_n}\cap \bigcup_{i\leq k_n}F^\alpha_{i} \subseteq A_{l_n}\cap \bigcup_{i< l_n}F^\alpha_{i}
=
A_{(l_n-1)+1}\cap \bigcup_{i\leq l_n-1}F^\alpha_{i}$. 
\end{proof}

\begin{corollary}
    \label{cor:bounds-for-bsigma}
For every ideal $\I$ on $\omega$ we have
$$\omega_1\leq\bs(\I)=\min\{\bb_s(\I),\add_\omega(\I)\}\leq\bb.$$
\end{corollary}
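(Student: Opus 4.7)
The plan is simply to assemble the corollary from the two results already established immediately before it. The middle equality $\bs(\I)=\min\{\bb_s(\I),\add_\omega(\I)\}$ is exactly the content of Theorem~\ref{thm:bsigma-leq-min-bs-and-add-omega}(\ref{thm:bsigma-leq-min-bs-and-add-omega:item-for-one-ideal}), so nothing new needs to be done there.

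For the outer inequalities, I would apply Proposition~\ref{prop:bounds-for-bsigma}(\ref{prop:bounds-for-bsigma:item:geq-omega-one}) with $\J=\I$: the hypothesis $\I\subseteq\J$ is trivially satisfied, and that item gives $\omega_1\leq\bs(\I,\I)\leq\bb$. Since by the shorthand convention $\bs(\I)=\bs(\I,\I)$, this yields both $\omega_1\leq\bs(\I)$ and $\bs(\I)\leq\bb$ at once.

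No obstacle is expected; the proof is a one-line citation of the preceding two results. The only thing worth being careful about is making sure the notational convention $\bs(\I)=\bs(\I,\I)$ is invoked so that the reader sees why Proposition~\ref{prop:bounds-for-bsigma}, stated for two ideals, applies in the one-ideal shorthand form used in the corollary.
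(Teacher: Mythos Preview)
Your proposal is correct and matches the paper's own proof essentially verbatim: the paper also derives the corollary by citing Theorem~\ref{thm:bsigma-leq-bs} for the equality and Proposition~\ref{prop:bounds-for-bsigma}(\ref{prop:bounds-for-bsigma:item:leq-b}) for the bounds (these are the same results you cite, under alternate labels). Your added remark about invoking the shorthand $\bs(\I)=\bs(\I,\I)$ is a reasonable clarification that the paper leaves implicit.
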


\begin{proof}
It follows from Theorem~\ref{thm:bsigma-leq-bs}  and Proposition~\ref{prop:bounds-for-bsigma}(\ref{prop:bounds-for-bsigma:item:leq-b}).
\end{proof}

\begin{corollary}
\label{cor:bs-is-regular}
The cardinals  $\bnumber_s(\I)$, $\bs(\I)$ and $\add_\omega(\I)$ are regular for every ideal $\I$.
\end{corollary}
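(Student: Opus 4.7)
The plan is to establish regularity of each of the three cardinals by a single template argument that generalises the proof of Proposition~\ref{prop:bounds-for-bsigma}(\ref{prop:bounds-for-bsigma:item:bsigma-has-uncountable-cofinality}). For $\kappa\in\{\bnumber_s(\I),\bs(\I),\add_\omega(\I)\}$ I would suppose toward a contradiction that $\cf(\kappa)<\kappa$ (in the $\add_\omega$ case I first dispose of the trivial case $\kappa=\infty$, i.e.~$\I$ countably generated, where regularity is vacuous by convention). Take a minimum-size witness $\cE$ (respectively $\cA$), decompose it as $\bigcup_{\alpha<\lambda}\cE_\alpha$ with $\lambda=\cf(\kappa)$ and each $|\cE_\alpha|<\kappa$, use minimality of $\kappa$ to find for each $\alpha$ an object bounding $\cE_\alpha$, and then bound the $\lambda$-indexed collection of these objects by a single object, obtaining a contradiction with $\cE$ being a witness.

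For $\bs(\I)$ the argument is essentially verbatim the one in Proposition~\ref{prop:bounds-for-bsigma}(\ref{prop:bounds-for-bsigma:item:bsigma-has-uncountable-cofinality}) with ``$\omega$'' replaced by ``$\lambda=\cf(\bs(\I))$'': each $(A^\alpha_n)\in\cM_\I$ bounding $\cE_\alpha$ exists by minimality, and since $\lambda<\bs(\I)$ the family $\{(A^\alpha_n):\alpha<\lambda\}$ is itself bounded, after which transitivity of $\subseteq$ produces $(B_n)\in\cM_\I$ with $E_n\subseteq B_n$ eventually for every $(E_n)\in\cE$. For $\add_\omega(\I)$, minimality gives a countable $\cB_\alpha\subseteq\I$ such that every $A\in\cA_\alpha$ sits inside some member of $\cB_\alpha$; the union $\cB=\bigcup_\alpha\cB_\alpha$ has cardinality $\lambda\cdot\omega=\lambda<\add_\omega(\I)$, where the inequality $\omega<\add_\omega(\I)$ is Proposition~\ref{prop:bounds-for-add-omega}(\ref{prop:bounds-for-add-omega:item:included-geq-omega-one}), so $\cB$ is itself dominated by a countable $(C_n)\in\I^\omega$, and composing the two dominations yields the contradiction.

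For $\bnumber_s(\I)$ the composition step is the only genuinely new ingredient. Each $\cE_\alpha\subseteq\widehat{\cP}_\I$ is dominated by some $(A^\alpha_n)\in\cP_\I$, meaning $\bigcup_n(A^\alpha_{n+1}\cap\bigcup_{i\leq n}E_i)\in\I$ for every $(E_n)\in\cE_\alpha$. Since $\cP_\I\subseteq\widehat{\cP}_\I$ and $\lambda<\bnumber_s(\I)$, minimality produces $(B_n)\in\cP_\I$ with $\bigcup_n(B_{n+1}\cap\bigcup_{i\leq n}A^\alpha_i)\in\I$ for every $\alpha<\lambda$. To see that $(B_n)$ bounds all of $\cE$, fix $(E_n)\in\cE_\alpha$ and any $x\in B_{n+1}\cap\bigcup_{i\leq n}E_i$; let $k$ be the unique index with $x\in A^\alpha_k$. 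If $k\leq n$ then $x\in B_{n+1}\cap\bigcup_{i\leq n}A^\alpha_i$, so $x$ lies in the first $\I$-set. If $k>n$ then $x\in A^\alpha_{(k-1)+1}\cap\bigcup_{i\leq k-1}E_i$, so $x$ belongs to $\bigcup_m(A^\alpha_{m+1}\cap\bigcup_{i\leq m}E_i)\in\I$. Hence $\bigcup_n(B_{n+1}\cap\bigcup_{i\leq n}E_i)$ is contained in the union of two sets in $\I$, contradicting $\cE$ being a witness for $\bnumber_s(\I)$.

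The main obstacle is the ``two-step composition'' for $\bnumber_s(\I)$, which (unlike for $\bs(\I)$ and $\add_\omega(\I)$) is not simply the transitivity of inclusion; the case split on whether $k\leq n$ or $k>n$ is the essential combinatorial observation that makes the argument go through, and it is precisely the type of calculation that the Vojt\'a\v{s}-style reformulation in Section~\ref{sec:b-of-relations} will later package abstractly.
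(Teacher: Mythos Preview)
Your proposal is correct. For $\bs(\I)$ and $\add_\omega(\I)$ your arguments coincide with the paper's combinatorial proofs essentially word for word. For $\bnumber_s(\I)$, however, the paper does not give a direct argument: it cites an external reference (\cite[Corollary~3.12]{MR4472525}) and merely remarks that a ``topological'' proof along the lines of the one for $\bs(\I)$ would also work. Your self-contained combinatorial argument, with the case split on whether the partition index $k$ of $x$ in $(A^\alpha_n)$ satisfies $k\le n$ or $k>n$, is therefore a genuine addition. What it buys is transparency: the domination relation behind $\bnumber_s(\I)$ is not literally transitive, and your two-case analysis makes explicit that a two-step domination nonetheless composes to a single $\I$-set, which is exactly what is needed. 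The paper's approach buys brevity and avoids repeating an argument already in the literature; yours makes the exposition self-contained.
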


\begin{proof}
The regularity of $\bnumber_s(\I)$ is shown in \cite[Corollary~3.12]{MR4472525} (however, one could also show it using a similar ``topological'' argument as for $\bs(\I)$ presented below). 

We will present two proofs of regularity of $\bs(\I)$ -- one ``topological'' and one ``purely combinatorial''. We start with the ``topological'' proof.

Suppose for sake of contradiction that $\bs(\I)=\bigcup\{A_\alpha:\alpha<\kappa\}$ where $\kappa<\bs(\I)$ and $|A_\alpha|<\bs(\I)$ for every $\alpha<\kappa$. 
Let $X$ be a normal space such  that $X\notin (\text{$\I$-p,$\I$-$\sigma$-u})$ and
$|X|=\bs(\I)$ (which exists by Corollary 
\ref{cor:pointwise-versus-sigma-uniform:non}(\ref{cor:pointwise-versus-sigma-uniform:non-equals-bsigma})).
Then we can write $X =\bigcup\{X_\alpha:\alpha<\kappa\}$ with $|X_\alpha|=|A_\alpha|$ for each $\alpha<\kappa$.
Take a sequence $(f_n)$ in $\cC(X)$ such that $f_n\xrightarrow{\text{$\I$-p}}0$
but $f_n\xrightarrow{\text{$\I$-$\sigma$-u}}0$ does not hold.
Since  
$f_n\restriction X_\alpha \xrightarrow{\text{$\I$-p}}0$ and $|X_\alpha|<\bs(\I)$ for every $\alpha<\kappa$, we can use  Theorem~\ref{thm:pointwise-implies-sigma-uniform:necessary-condition}(\ref{thm:pointwise-implies-sigma-uniform:necessary-condition:item}) to obtain that 
$f_n\restriction X_\alpha \xrightarrow{\text{$\I$-$\sigma$-u}}0$ for every $\alpha<\kappa$.
Now, Proposition~\ref{prop:sigma-uniform-convergence:restriction-to-less-than-bsigma-sets}(\ref{prop:sigma-uniform-convergence:restriction-to-less-than-bsigma-sets:item}) 
implies that 
$f_n\xrightarrow{\text{$\I$-$\sigma$-u}}0$, a contradiction.  

Now we present the ``purely combinatorial'' proof of regularity of $\bs(\I)$. Let $\cE$ be a witness for $\bs(\I)$ i.e.~$|\cE|=\bs(\I)$, $\cE\subseteq\cM_\I$ and for every $(A_n)\in \cM_\I$ there is $(E_n)\in \cE$ such that $E_n\not\subseteq A_n$ for infinitely many $n\in \omega$.
Using the properties of cofinality, we know that $\cE$ can be decomposed  into the union of  $\cf(\bs(\I))$ subfamilies $\cE_\alpha$ of cardinalites less than $\bs(\I)$.
Since $|\cE_\alpha|<\bs(\I)$, there is $(A_n^\alpha)\in \cM_\I$ such that for every $(E_n)\in \cE_\alpha$ we have $E_n\subseteq A_n^\alpha$ for all but finitely many $n\in \omega$.
Now, suppose for sake of contradiction that $\bs(\I)$ is not regular 
i.e.~$\cf(\bs(\I))<\bs(\I)$.
Then $\cA=\{(A_n^\alpha):\alpha<\cf(\bs(\I))\}\subseteq\cM_\I$ and $|\cA|<\bs(\I)$, so there is $(B_n)\in \cM_\I$ such that for every  $\alpha<\cf(\bs(\I))$ we have  $A_n^\alpha \subseteq B_n$ for all but finitely many $n\in \omega$.
Consequently, for every $(E_n)\in \cE$ we have $E_n\subseteq B_n$ for all but finitely many $n\in \omega$, a contradiction with the choice of the family $\cE$.

Finally, we show the regularity of $\add_\omega(\I)$.
Suppose for sake of contradiction that 
$\add_\omega(\I) = \bigcup\{A_\alpha:\alpha<\kappa\}$ where $\kappa <\add_\omega(\I)$ and $|A_\alpha|<\add_\omega(\I)$ for every $\alpha<\kappa$.
Let $\cB\subseteq\I$ be such that $|\cB|=\add_\omega(\I)$ and for every $(D_k)\in \I^\omega$ there is $B\in \cB$ with $B\not\subseteq D_k$ for any $k<\omega$.
Then we can write $\cB=\bigcup\{\cB_\alpha:\alpha<\kappa\}$ with $|\cB_\alpha|=|A_\alpha|$ for every $\alpha<\kappa$.
Since $|\cB_\alpha|<\add_\omega(\I)$ and $\cB_\alpha\subseteq\I$ for every $\alpha<\kappa$, we can find $(C^\alpha_n)\in \I^\omega$ such that for every $B\in \cB_\alpha$ there is $n\in \omega$ with $B\subseteq C^\alpha_n$. 
Let $\cC=\{C^\alpha_n:\alpha<\kappa,n<\omega\}$.
Then $\cC\subseteq\I$ and $|\cC|\leq \kappa\cdot\omega<\add_\omega(\I)$ (by Proposition \ref{prop:bounds-for-add-omega}(\ref{prop:bounds-for-add-omega:item:included-geq-omega-one})), so there is $(D_k)\in \I^\omega$
such that for every $\alpha<\kappa$ and $n<\omega$ there is $k<\omega$ with $C^\alpha_n\subseteq D_k$.
Thus, for every $B\in \cB$ we can find $k$ with $B\subseteq D_k$, a contradiction.
\end{proof}


\subsection{P-ideals}

An ideal $\I$ is a \emph{P-ideal} if for every countable family $\cA\subseteq\I$ there exists a set $B\in \I$ such that $A\setminus B$ is finite for every $A\in \cA$.
It is easy to see that $\adds(\I)\geq \omega_1$ for P-ideals and $\adds(\I)=\omega$ for non-P-ideals.

\begin{remark}
The inequality from Proposition~\ref{prop:bounds-for-bsigma-bs}(\ref{prop:bounds-for-bsigma-bs:item}) is interesting, in a sense, only for P-ideals.
Indeed,
by 
Proposition~\ref{prop:bounds-for-bs}(\ref{prop:bounds-for-bsigma:item:geq-omega-one})(\ref{prop:bounds-for-bs:item:equals-min-b-and-adds}), 
 we have $\bb_s(\fin,\I,\I)=\adds(\I)=\omega<\omega_1\leq\bs(\I)$
 in the case of non-P-ideals.
\end{remark}

\begin{proposition}
\label{prop:for-Pideals-add-omega-equals-adds}
If $\I$ is a P-ideal on $\omega$, then 
$$\add_\omega(\I)=\add^*(\I).$$
\end{proposition}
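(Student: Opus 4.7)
The plan is to establish both inequalities separately. The direction $\add_\omega(\I)\geq \adds(\I)$ is already proved in Proposition~\ref{prop:bounds-for-add-omega}(\ref{prop:bounds-for-add-omega:item:add-omega-geq-add-star}) (applied with $\J=\I$, since $\I\subseteq\I$ holds trivially), so the P-ideal hypothesis should only be needed for the reverse inequality $\add_\omega(\I)\leq \adds(\I)$.

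To establish $\add_\omega(\I)\leq \adds(\I)$, I would take a witness $\cA\subseteq\I$ for $\adds(\I)$ with $|\cA|=\adds(\I)$ and verify that the same family $\cA$ already witnesses $\add_\omega(\I)$. So fix any $(B_n)\in\I^\omega$; the goal is to produce $A\in\cA$ with $A\not\subseteq B_n$ for every $n\in\omega$. The P-ideal assumption is used exactly here: pick $B\in\I$ such that $B_n\setminus B$ is finite for every $n\in\omega$. Then, by the defining property of $\cA$, there exists $A\in\cA$ with $|A\setminus B|=\omega$.

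The final step is the routine set-theoretic verification that this $A$ works. For every $n\in\omega$, the inclusion
\[
A\setminus B_n \ \supseteq\ (A\setminus B)\setminus (B_n\setminus B)
\]
holds (points of $A\setminus B$ are already outside $B$, hence outside $B_n\cap B$, so they fail to be in $B_n$ unless they lie in $B_n\setminus B$). The right-hand side is an infinite set minus a finite set, so it is in particular nonempty, giving $A\not\subseteq B_n$. Thus $\cA$ is a witness for $\add_\omega(\I)$, proving $\add_\omega(\I)\leq|\cA|=\adds(\I)$.

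There is no real obstacle here, only the observation that being a P-ideal turns the sequence $(B_n)$ into an (essentially) single $\subseteq^*$-upper bound $B$, which reduces the $\add_\omega$-condition to the $\adds$-condition. Without the P-ideal assumption this reduction is not available, which is consistent with the equality failing in general.
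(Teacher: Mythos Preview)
Your proof is correct and follows essentially the same approach as the paper: one inequality is cited from Proposition~\ref{prop:bounds-for-add-omega}(\ref{prop:bounds-for-add-omega:item:add-omega-geq-add-star}), and for the reverse inequality a witness $\cA$ for $\adds(\I)$ is shown to also witness $\add_\omega(\I)$ by using the P-ideal property to replace a given sequence $(B_n)$ by a single $\subseteq^*$-upper bound $B\in\I$. Your write-up is slightly more explicit in the final set-theoretic verification, but the argument is the same.
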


\begin{proof}
From Proposition~\ref{prop:bounds-for-add-omega}(\ref{prop:bounds-for-add-omega:item:add-omega-geq-add-star}) it follows that we only need to show 
$\add_\omega(\I)\leq \add^*(\I)$.
Let $\cA\subseteq\I$ be a witness for $\add^*(\I)$. We claim that $\cA$ is also a witness for $\add_\omega(\I)$.
Indeed, take any $(B_n)\in [\I]^\omega$.
Since $\I$ is a P-ideal, there is $B\in \I$ such that $|B_n\setminus B|<\omega$ for every $n\in \omega$.
Since $B\in \I$, we find $A\in \cA$ such that $A\setminus B$ is infinite.
Consequently, $A\setminus B_n$ is infinite for every $n\in \omega$. Thus, $A\not\subseteq B_n$ for any $n\in \omega$.
\end{proof}

\begin{remark}
    The cardinal $\adds(\I)$ has been extensively studied so far (see e.g.~a very good survey of Hru\v{s}\'{a}k \cite{MR2777744}). However, this cardinal is useless for non-P-ideals (because its value is $\omega$ for non-P-ideals). On the other hand, the cardinal $\add_\omega(\I)$ coincides with $\adds(\I)$ for P-ideals (as shown in Proposition~\ref{prop:for-Pideals-add-omega-equals-adds}) and it can distinguish non-P-ideals (as shown in Theorem~\ref{thm:value-of-bsigma-bs-add-omega-for-known-ideals}).  
    Thus, the cardinal $\add_\omega(\I)$ is, in a sense, more sensitive variant of $\adds(\I)$, and maybe it will turn out to be more useful than $\adds(\I)$ in the future research.
\end{remark}

\begin{corollary}
\label{cor:bsigma-for-P-ideals}
If $\I$ is a P-ideal on $\omega$ then 
$$\bs(\I)=\bb_s(\fin,\I,\I)=\min\{\bb,\adds(\I)\}\leq \add_\omega(\I).$$
\end{corollary}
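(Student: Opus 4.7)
The plan is to obtain the corollary by chaining together earlier results, the only essential use of the P-ideal hypothesis being Proposition~\ref{prop:for-Pideals-add-omega-equals-adds}.

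First, I would apply Theorem~\ref{thm:bsigma-leq-min-bs-and-add-omega}(\ref{thm:bsigma-leq-min-bs-and-add-omega:item-for-one-ideal}) to write
$$\bs(\I) \;=\; \min\{\bb_s(\I),\add_\omega(\I)\},$$
which immediately gives the final inequality $\bs(\I)\leq \add_\omega(\I)$. Since $\I$ is a P-ideal, Proposition~\ref{prop:for-Pideals-add-omega-equals-adds} yields $\add_\omega(\I)=\adds(\I)$, so in particular
$$\bs(\I) \;\leq\; \adds(\I).$$

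Next, I would combine this with Proposition~\ref{prop:bounds-for-bsigma}(\ref{prop:bounds-for-bsigma:item:leq-b}), which gives $\bs(\I)\leq \bb$ for every ideal. Together these two bounds yield
$$\bs(\I) \;\leq\; \min\{\bb,\adds(\I)\},$$
and by Proposition~\ref{prop:bounds-for-bsigma-bs}(\ref{prop:bounds-for-bsigma-bs:item}) the right-hand side equals $\bb_s(\fin,\I,\I)$.

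Finally, the reverse inequality $\bs(\I)\geq \bb_s(\fin,\I,\I)$ is precisely the content of Proposition~\ref{prop:bounds-for-bsigma-bs}(\ref{prop:bounds-for-bsigma-bs:item}), so the displayed chain
$$\bs(\I)\;=\;\bb_s(\fin,\I,\I)\;=\;\min\{\bb,\adds(\I)\}\;\leq\;\add_\omega(\I)$$
follows at once. There is no real obstacle here — the argument is purely bookkeeping, and the only subtle point is that the P-ideal assumption is invoked exactly once, through Proposition~\ref{prop:for-Pideals-add-omega-equals-adds}, to turn the general bound $\bs(\I)\leq \add_\omega(\I)$ into the stronger $\bs(\I)\leq \adds(\I)$ needed to match the lower bound from Proposition~\ref{prop:bounds-for-bsigma-bs}(\ref{prop:bounds-for-bsigma-bs:item}).
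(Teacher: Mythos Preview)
Your proof is correct and follows essentially the same approach as the paper: both arguments assemble the equalities from Proposition~\ref{prop:bounds-for-bsigma-bs}(\ref{prop:bounds-for-bsigma-bs:item}), the bound $\bs(\I)\leq\bb$ from Proposition~\ref{prop:bounds-for-bsigma}(\ref{prop:bounds-for-bsigma:item:leq-b}), and the identity $\add_\omega(\I)=\adds(\I)$ for P-ideals from Proposition~\ref{prop:for-Pideals-add-omega-equals-adds}, combined with Theorem~\ref{thm:bsigma-leq-min-bs-and-add-omega} to get $\bs(\I)\leq\add_\omega(\I)$. The only cosmetic difference is that the paper justifies the final inequality $\min\{\bb,\adds(\I)\}\leq\add_\omega(\I)$ directly via $\adds(\I)=\add_\omega(\I)$, whereas you obtain $\bs(\I)\leq\add_\omega(\I)$ first and then show the equalities; the content is identical.
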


\begin{proof}
It is enough to note that 
$\bs(\I)\geq \bb_s(\fin,\I,\I)=\min\{\bb,\adds(\I)\}$ 
follows from Proposition~\ref{prop:bounds-for-bsigma-bs}(\ref{prop:bounds-for-bs:item:equals-min-b-and-adds}),
 $\bs(\I)\leq\bb$ follows from Proposition~\ref{prop:bounds-for-bsigma}(\ref{prop:bounds-for-bsigma:item:leq-b}),
$\bs(\I)\leq\adds(\I)$ follows from Theorem~\ref{thm:bsigma-leq-add-omega} and 
Proposition~\ref{prop:for-Pideals-add-omega-equals-adds}
and  
$\min\{\bb,\adds(\I)\}\leq \add_\omega(\I)$ follows from Proposition~\ref{prop:for-Pideals-add-omega-equals-adds}.
\end{proof}


\subsection{Fubini products}

\begin{lemma}
\label{lem:bnumbers-for-Fubini-products-for-left-ideal}
\label{otimes-lemma1}
 Let $\I,\J$ be ideals on $\omega$.
\begin{enumerate}
    \item $\bs(\I\otimes\J)\leq\bs(\I)$.\label{lem:bnumbers-for-Fubini-products-for-left-ideal:item:bsigma}
    \item $\add_\omega(\I\otimes\J)\leq\add_\omega(\I)$.\label{lem:bnumbers-for-Fubini-products-for-left-ideal:item:add-omega}
\end{enumerate}
\end{lemma}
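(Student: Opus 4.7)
The plan is to lift any witness family for $\bs(\I)$ or $\add_\omega(\I)$ to a witness family for the corresponding cardinal on $\I\otimes\J$ by taking horizontal ``full strips'' $A\mapsto A\times\omega$. The key observation driving both items is the following translation between membership in $\I\otimes\J$ and in $\I$: for any $B\subseteq\omega\times\omega$, the set $\pi(B)=\{x\in\omega:(B)_x\notin\J\}$ belongs to $\I$ precisely when $B\in\I\otimes\J$, and moreover if $(B_n)$ is $\subseteq$-increasing then so is $(\pi(B_n))$, because $(B_n)_x\subseteq (B_{n+1})_x$ forces $\pi(B_n)\subseteq\pi(B_{n+1})$.

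For item (\ref{lem:bnumbers-for-Fubini-products-for-left-ideal:item:bsigma}), fix a witness $\{(E_k^\alpha):\alpha<\bs(\I)\}\subseteq\cM_\I$ for $\bs(\I)$ and set $\widetilde{E}_k^\alpha=E_k^\alpha\times\omega$. Each sequence $(\widetilde{E}_k^\alpha)$ lies in $\cM_{\I\otimes\J}$ (all vertical sections are either $\emptyset$ or $\omega$, and $\{x:(\widetilde{E}_k^\alpha)_x\notin\J\}=E_k^\alpha\in\I$ since $\omega\notin\J$). Given any $(A_n)\in\cM_{\I\otimes\J}$, set $C_n=\pi(A_n)\in\I$; by the observation above $(C_n)\in\cM_\I$. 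The witness property of the original family produces $\alpha$ with $E_n^\alpha\not\subseteq C_n$ for infinitely many $n$; for each such $n$ pick $x_n\in E_n^\alpha\setminus C_n$, so $(A_n)_{x_n}\in\J$, hence $(A_n)_{x_n}\neq\omega$, giving some $y_n$ with $(x_n,y_n)\in\widetilde{E}_n^\alpha\setminus A_n$. Thus $\widetilde{E}_n^\alpha\not\subseteq A_n$ for infinitely many $n$, as required.

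For item (\ref{lem:bnumbers-for-Fubini-products-for-left-ideal:item:add-omega}), proceed analogously but with a simpler indexing. Fix a witness $\cA\subseteq\I$ for $\add_\omega(\I)$ and set $\widetilde{\cA}=\{A\times\omega:A\in\cA\}\subseteq\I\otimes\J$. Given $(B_n)\in(\I\otimes\J)^\omega$, define $D_n=\pi(B_n)\in\I$. By the choice of $\cA$ there is $A\in\cA$ with $A\not\subseteq D_n$ for every $n$; picking for each $n$ a witness $x_n\in A\setminus D_n$ yields $(B_n)_{x_n}\in\J$, and since $\omega\notin\J$ some $y_n$ satisfies $(x_n,y_n)\in (A\times\omega)\setminus B_n$, so $A\times\omega\not\subseteq B_n$ for every $n$.

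There is no real obstacle here; the only point requiring care is to verify that the lifted sequences land in $\cM_{\I\otimes\J}$ (monotonicity and membership in the product ideal) and to exploit $\omega\notin\J$ at exactly the right step to pass from ``the section is in $\J$'' to ``the strip is not contained in $A_n$.'' If one wanted the bound in the opposite direction under additional hypotheses on $\J$, that would be a separate and much more delicate matter, but for the stated one-sided inequalities the strip construction suffices.
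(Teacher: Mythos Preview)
Your proof is correct and follows essentially the same route as the paper: lift a witness family for $\bs(\I)$ (resp.\ $\add_\omega(\I)$) to one for $\I\otimes\J$ via $A\mapsto A\times\omega$, project a given test sequence by $B\mapsto\{x:(B)_x\notin\J\}$, and use $\omega\notin\J$ to produce the missing point. The paper leaves item~(\ref{lem:bnumbers-for-Fubini-products-for-left-ideal:item:add-omega}) as ``an easy modification'' of item~(\ref{lem:bnumbers-for-Fubini-products-for-left-ideal:item:bsigma}), which you spell out explicitly.
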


\begin{proof}

(\ref{lem:bnumbers-for-Fubini-products-for-left-ideal:item:bsigma})
Let $\{( E^\alpha_k) : \alpha<\bs(\I)\}\subseteq\cM_\I$ be a witness for $\bs(\I)$. Define $D^\alpha_k=E^\alpha_k\times\omega$ for all $k\in\omega$ and $\alpha<\bs(\I)$. Then $\{( D^\alpha_k) : \alpha<\bs(\I)\}\subseteq\cM_{\I\otimes\J}$. 

Fix any $( B_k) \in\cM_{\I\otimes\J}$. Define $A_k=\{n\in\omega: (B_k)_{(n)}\notin\J\}$ for all $k\in\omega$. Then $( A_k) \in\cM_{\I}$, so there is $\alpha<\bs(\I)$ such that $Z=\{k\in\omega: E^\alpha_k\not\subseteq A_k\}\notin\fin$. For each $k\in Z$, we  pick $n_k,m_k\in\omega$ such that $n_k\in E^\alpha_k\setminus A_k$ and $m_k\in\omega\setminus (B_k)_{(n_k)}$ (which is possible as $n_k\notin A_k$ implies $(B_k)_{(n_k)}\in\J$). Then $(n_k,m_k)\in D^\alpha_k\setminus B_k$ for each $k\in Z$, so $D^\alpha_k \not\subseteq  B_k$ for infinitely many $k\in\omega$.

(\ref{lem:bnumbers-for-Fubini-products-for-left-ideal:item:add-omega}) 
This is an easy modification of the proof of item (\ref{lem:bnumbers-for-Fubini-products-for-left-ideal:item:bsigma}).
\end{proof}

\begin{lemma}
\label{lem:bnumbers-for-Fubini-products-for-right-ideal}
\label{otimes-lemma2}
 Let $\I,\J$ be ideals on $\omega$.
\begin{enumerate}
    \item $\bs(\I\otimes\J)\leq\bs(\J)$.\label{lem:bnumbers-for-Fubini-products-for-right-ideal:item:bsigma}
    \item $\add_\omega(\I\otimes\J)\leq\add_\omega(\J)$.\label{lem:bnumbers-for-Fubini-products-for-right-ideal:item:add-omega}
\end{enumerate}
\end{lemma}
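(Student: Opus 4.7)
The plan is to mirror the proof of Lemma~\ref{lem:bnumbers-for-Fubini-products-for-left-ideal}, swapping the roles of the two coordinates. Concretely, I will ``inflate'' each $\J$-set $E\subseteq\omega$ to the horizontal strip $\omega\times E$, which automatically lies in $\I\otimes\J$ because every vertical section equals $E\in\J$. This lets me push any witness for $\bs(\J)$ (respectively $\add_\omega(\J)$) up to $\I\otimes\J$, provided I can reduce an arbitrary $(B_k)\in\cM_{\I\otimes\J}$ back to $\cM_\J$ in a section-compatible way.

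For part~(\ref{lem:bnumbers-for-Fubini-products-for-right-ideal:item:bsigma}), fix a witness $\{(  E^\alpha_k):\alpha<\bs(\J)\}\subseteq\cM_\J$ for $\bs(\J)$ and put $D^\alpha_k=\omega\times E^\alpha_k$; then $(  D^\alpha_k)\in\cM_{\I\otimes\J}$. Given any $(  B_k) \in\cM_{\I\otimes\J}$, the reduction I will use is
$$A_k=\bigcap_{n\in\omega}(B_k)_{n}.$$
Sectionwise monotonicity gives $A_k\subseteq A_{k+1}$, and $A_k\in\J$ because $\{n:(B_k)_n\notin\J\}\in\I$ is a proper subset of $\omega$, hence some section $(B_k)_n\in\J$ contains $A_k$. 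Thus $(  A_k)\in\cM_\J$, and the witness property of $\bs(\J)$ yields $\alpha$ with $E^\alpha_k\not\subseteq A_k$ for infinitely many $k$. A direct unfolding gives $D^\alpha_k\subseteq B_k$ if and only if $E^\alpha_k\subseteq A_k$, so $D^\alpha_k\not\subseteq B_k$ for infinitely many $k$, as desired.

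For part~(\ref{lem:bnumbers-for-Fubini-products-for-right-ideal:item:add-omega}), the same idea applies with ``chain'' replaced by ``family''. Starting from a witness $\{A_\alpha:\alpha<\add_\omega(\J)\}\subseteq\J$, I would set $D_\alpha=\omega\times A_\alpha\in\I\otimes\J$; given any $(B_n)\in(\I\otimes\J)^{\omega}$, define $C_n=\bigcap_{m\in\omega}(B_n)_m\in\J$ by the same argument as above. The witness property then supplies $\alpha$ with $A_\alpha\not\subseteq C_n$ for every $n$, and picking $y\in A_\alpha\setminus C_n$ together with $m$ such that $y\notin(B_n)_m$ yields $(m,y)\in D_\alpha\setminus B_n$, hence $D_\alpha\not\subseteq B_n$ for every $n$.

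The only step that is not purely formal will be the verification that $A_k$ (and $C_n$) lies in $\J$; this is the true content of the lemma, and it follows immediately from the definition of $\I\otimes\J$ together with $\omega\notin\I$. Everything else is bookkeeping, so I expect no genuine obstacle.
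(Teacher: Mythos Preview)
Your argument is correct. The overall architecture matches the paper's proof---inflate a $\J$-witness to horizontal strips $D^\alpha_k=\omega\times E^\alpha_k$, then reduce an arbitrary $(B_k)\in\cM_{\I\otimes\J}$ down to a sequence in $\cM_\J$---but your reduction step is different and in fact a bit cleaner. The paper selects a single $\J$-section $A_k=(B_k)_{(i_k)}$ with $i_k=\min\{n:(B_k)_{(n)}\in\J\}$, and because the indices $i_k$ may jump around, must then enforce monotonicity by hand via $C_k=\bigcup_{j\leq k}A_j$ before invoking the witness. Your choice $A_k=\bigcap_{n\in\omega}(B_k)_n$ is automatically monotone and yields the exact equivalence $D^\alpha_k\subseteq B_k\iff E^\alpha_k\subseteq A_k$, which lets you skip the cumulative-union step and the explicit point-picking at the end. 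The paper's version has the mild advantage that it only ever looks at a single section per $k$, while yours relies on the (equally easy) observation that the intersection of all sections is dominated by any one $\J$-section; in terms of what each buys, your route is shorter, the paper's is perhaps more concrete about which coordinate witnesses the failure. The same comparison applies verbatim to part~(\ref{lem:bnumbers-for-Fubini-products-for-right-ideal:item:add-omega}).
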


\begin{proof}
(\ref{lem:bnumbers-for-Fubini-products-for-right-ideal:item:bsigma})
Let $\{( E^\alpha_k) : \alpha<\bs(\J)\}\subseteq\cM_\J$ be a witness for $\bs(\J)$. Define $D^\alpha_k=\omega\times E^\alpha_k$ for all $k\in\omega$ and $\alpha<\bs(\J)$. Then $\{( D^\alpha_k) : \alpha<\bs(\J)\}\subseteq\cM_{\I\otimes\J}$. 

Fix any $( B_k) \in\cM_{\I\otimes\J}$. Define $i_k=\min\{n\in\omega: (B_k)_{(n)}\in\J\}$ and $A_k=(B_k)_{(i_k)}$ for all $k\in\omega$ (note that $i_k$ is well defined as $\{n\in\omega: (B_k)_{(n)}\notin\J\}\in\I$). 
For every $k\in \omega$, we define $C_k = \bigcup_{j\leq k}A_j$.
Then $( C_k) \in\cM_{\J}$, so there is $\alpha<\bs(\J)$ such that $Z=\{k\in\omega: E^\alpha_k\not\subseteq C_k\}\notin\fin$. 

For each $k\in Z$, we pick $m_k\in\omega$ such that $m_k\in E^\alpha_k\setminus C_k$. Then for each $k\in Z$ we have $(i_k,m_k)\in D^\alpha_k\setminus B_k$ (as $(i_k,m_k)\in B_k$ would imply $m_k\in (B_k)_{(i_k)}=A_k\subseteq C_k$), so $D^\alpha_k \not\subseteq  B_k$ for infinitely many $k\in\omega$.

(\ref{lem:bnumbers-for-Fubini-products-for-right-ideal:item:add-omega})
This is an easy modification of the proof of item (\ref{lem:bnumbers-for-Fubini-products-for-right-ideal:item:bsigma}).
\end{proof}

\begin{lemma}
\label{lem:bnumbers-for-Fubini-products-for-left-right-ideal}
\label{otimes-lemma3}
 $\bs(\I\otimes\J)\geq\min \{ \bs(\I), \bs(\J) \}$
 for every ideals $\I,\J$ on $\omega$.
\end{lemma}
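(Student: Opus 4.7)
The plan is to mirror the proofs of Lemmas~\ref{otimes-lemma1} and \ref{otimes-lemma2} in reverse: handle the outer $\I$-coordinate with $\bs(\I)$ and the inner $\J$-coordinate with $\bs(\J)$, then assemble a dominator. Let $\lambda<\min\{\bs(\I),\bs(\J)\}$ and fix any family $\cE=\{(D^\alpha_k)_k : \alpha<\lambda\}\subseteq\cM_{\I\otimes\J}$; the goal is to produce $(C_k)\in\cM_{\I\otimes\J}$ with $D^\alpha_k\subseteq C_k$ for cofinitely many $k$, for each $\alpha$, showing that $\cE$ fails to witness $\bs(\I\otimes\J)$.

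First I would control the outer coordinate. For each $\alpha,k$ put $A^\alpha_k=\{n\in\omega : (D^\alpha_k)_{(n)}\notin\J\}$; by the definition of $\I\otimes\J$ and monotonicity of $(D^\alpha_k)_k$ we get $(A^\alpha_k)_k\in\cM_\I$. Since $\lambda<\bs(\I)$, the family $\{(A^\alpha_k)_k : \alpha<\lambda\}$ is dominated by some $(A_k)\in\cM_\I$ with thresholds $N_\alpha$ so that $A^\alpha_k\subseteq A_k$ for $k\geq N_\alpha$.

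Next I would control the inner coordinate. For each pair $(\alpha,n)\in\lambda\times\omega$, let
\[
P^{\alpha,n}_k := \bigcup\bigl\{(D^\alpha_j)_{(n)} : N_\alpha\leq j\leq k,\ n\notin A_j\bigr\}.
\]
Each term in this union lies in $\J$ (because $n\notin A_j$ with $j\geq N_\alpha$ forces $n\notin A^\alpha_j$), so as a finite union $P^{\alpha,n}_k\in\J$, and clearly $(P^{\alpha,n}_k)_k\in\cM_\J$. Using $\bs(\J)\geq\omega_1$ (Proposition~\ref{prop:bounds-for-bsigma}(\ref{prop:bounds-for-bsigma:item:geq-omega-one})) together with $\lambda<\bs(\J)$, the $\max(\lambda,\omega)$-sized family $\{(P^{\alpha,n}_k)_k : \alpha<\lambda,\ n<\omega\}$ is dominated by some $(G_k)\in\cM_\J$ with thresholds $M_{\alpha,n}$. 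I would take as candidate $C_k := (A_k\times\omega)\cup(\omega\times G_k)$; since $\{n : (C_k)_{(n)}\notin\J\}\subseteq A_k\in\I$ and $(C_k)$ is $\subseteq$-monotone, we get $(C_k)_k\in\cM_{\I\otimes\J}$.

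The main obstacle is verifying that $(C_k)$ dominates each $(D^\alpha_k)_k$. For fixed $\alpha$ and $k\geq N_\alpha$, a point $(n,m)\in D^\alpha_k$ with $n\in A_k$ lies in $C_k$ automatically; otherwise $n\notin A_k$ forces $n\notin A^\alpha_k$, so $m\in (D^\alpha_k)_{(n)}\subseteq P^{\alpha,n}_k\subseteq G_k$ provided $k\geq M_{\alpha,n}$. The obstruction is that $M_{\alpha,n}$ depends on $n$, and $\sup_n M_{\alpha,n}$ can be infinite. I expect to resolve this by enlarging $A_k$ to absorb, at each stage, the \textit{slow} indices $n$ whose $\J$-domination has not yet taken effect. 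Exploiting $\lambda<\bs(\I)\leq\add_\omega(\I)$ (Theorem~\ref{thm:bsigma-leq-min-bs-and-add-omega}), the collection of such leftover index-sets can be covered by a countable $\I$-family; stacking this family into a monotone $(A'_k)\in\cM_\I$ and replacing $A_k$ by $A_k\cup A'_k$ should produce a uniform eventual threshold $K_\alpha$ with $D^\alpha_k\subseteq C_k$ for $k\geq K_\alpha$, completing the construction.
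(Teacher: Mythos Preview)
Your setup through the definition of $C_k=(A_k\times\omega)\cup(\omega\times G_k)$ is fine, and you correctly identify the obstruction: the threshold $M_{\alpha,n}$ depends on $n$, so no single $K_\alpha$ works. The gap is in your proposed resolution. Invoking $\lambda<\add_\omega(\I)$ requires a family of $\I$-sets of size $\leq\lambda$ to cover, but the ``leftover index-sets'' you would need to absorb, namely
\[
T^\alpha_k=\{n\notin A_k : (D^\alpha_k)_{(n)}\not\subseteq G_k\},
\]
need not lie in $\I$ at all. For instance, with $\I=\J=\fin$ and $D^\alpha_k\supseteq\{(n,n):n\in\omega\}\in\fin\otimes\fin$, the set $T^\alpha_k$ is cofinite for every $k$, since $G_k$ is finite. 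So there is nothing for $\add_\omega(\I)$ to act on, and enlarging $(A_k)$ within $\cM_\I$ cannot swallow these columns.

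The paper's proof handles the obstruction differently: it keeps column-dependent $\J$-dominators $(B^n_k)_k$ (rather than your single $(G_k)$), records the threshold function $f_\alpha(n)$, and then uses $\lambda<\bb$ (available since $\bs(\I)\leq\bb$) to find $g\in\omega^\omega$ with $f_\alpha+1\leq^* g$ for all $\alpha$. The dominator is then taken to be
\[
A_k=(B_k\times\omega)\cup\bigcup_{n}\bigl(\{n\}\times(B^n_k\cup B^n_{g(n)})\bigr),
\]
so that for each column $n$ the fixed $\J$-set $B^n_{g(n)}$ catches all the ``slow'' stages $k<g(n)$, while the finitely many $n$ where $f_\alpha(n)\geq g(n)$ are handled by taking $k$ large. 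In short, the slow columns are absorbed on the $\J$-side (column by column), not on the $\I$-side; your framework can be repaired the same way by replacing $G_k$ with $G_k\cup G_{g(n)}$ in column $n$, but this is exactly the paper's idea and uses $\bb$, not $\add_\omega(\I)$.
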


\begin{proof}
Suppose that $\kappa<\min(\bs(\I), \bs(\J))$ and fix any $\{( E^\alpha_k:k\in \omega) : \alpha<\kappa\}\subseteq\cM_{\I\otimes\J}$. We want to define $( A_k) \in\cM_{\I\otimes\J}$ such that for each $\alpha<\kappa$ we have $E^\alpha_k\not\subseteq A_k$ only for finitely many $k\in\omega$.

For each $\alpha<\kappa$ and $k,n\in\omega$ put: $$D^\alpha_k=\{m\in\omega: (E^\alpha_k)_{(m)}\notin\J\},$$
$$C^\alpha_{k,n}=
\begin{cases}
(E^\alpha_k)_{(n)}, & \text{if }n\in \omega\setminus D^\alpha_k,\\
\emptyset, & \text{otherwise.}\\
\end{cases}$$
Then $\{( D^\alpha_k) : \alpha<\kappa\}\subseteq\cM_\I$. Since $\kappa<\bs(\I)$, there is $( B_k) \in\cM_\I$ such that for each $\alpha<\kappa$ we have $\{k\in\omega: D^\alpha_k\not\subseteq B_k\}\in\fin$. Moreover, for each $n\in\omega$ the family $\{( \bigcup_{i\leq k} C^\alpha_{i,n}:k\in \omega) : \alpha<\kappa\}\subseteq\cM_\J$, so there is $( B^n_k) \in\cM_\J$ such that $\{k\in\omega: \bigcup_{i\leq k} C^\alpha_{i,n}\not\subseteq B^n_k\}\in\fin$ for each $\alpha<\kappa$ (as $\kappa<\bs(\J)$). 

For every $\alpha<\kappa$ define $f_\alpha\in\omega^\omega$ by:
$$f_\alpha(n)=\max\left\{k\in\omega: \bigcup_{i\leq k} C^\alpha_{i,n}\not\subseteq B^n_k\right\}.$$
By Proposition~\ref{prop:bounds-for-bsigma}(\ref{prop:bounds-for-bsigma:item:geq-omega-one}), $\kappa<\bb$, so there is $g\in\omega^\omega$ such that $f_\alpha+1\leq^\star g$ for all $\alpha<\kappa$. 

Define: 
$$A_k=(B_k\times\omega)\cup\bigcup_{n\in\omega}\left(\{n\}\times\left(B^n_k\cup B^n_{g(n)}\right)\right).$$

Fix $\alpha<\kappa$. We want to find $m\in\omega$ such that $E^\alpha_k\subseteq A_k$ for each $k>m$. Define $n_0=\max\{n\in\omega: f_\alpha(n)+1>g(n)\}$ ($n_0$ is well defined as $f_\alpha+1\leq^\star g$) and:
$$m=\max\left(\{n_0\}\cup\{f_\alpha(n): n\leq n_0\}\cup\{k\in\omega: D^\alpha_k\not\subseteq B_k\}\right)$$
($m$ is well defined as $\{k\in\omega: D^\alpha_k\not\subseteq B_k\}\in\fin$). 

Fix $k>m$ and any $(x,y)\in E^\alpha_k$. We will show that $(x,y)\in A_k$. There are four possible cases:
\begin{itemize}
    \item if $x\in D^\alpha_k$ then $x\in B_k$ (as $k>m\geq\max\{k'\in\omega: D^\alpha_{k'}\not\subseteq B_{k'}\}$), so $(x,y)\in B_k\times\omega\subseteq A_k$;
    \item if $x\notin D^\alpha_k$ and $f_\alpha(x)<k$ then $(x,y)\in E^\alpha_k$ implies $y\in(E^\alpha_k)_{(x)}=C^\alpha_{k,x}\subseteq\bigcup_{i\leq k}C^\alpha_{i,x}\subseteq B^x_k$, so $(x,y)\in\{x\}\times B^x_k\subseteq A_k$;
    \item if $x\notin D^\alpha_k$ and $x\leq n_0$ then  $k>m\geq\max\{f_\alpha(n): n\leq n_0\}\geq f_\alpha(x)$, so this case is covered by the previous one;
    \item if $x\notin D^\alpha_k$, $f_\alpha(x)\geq k$ and $x>n_0$ then $k\leq f_\alpha(x)<g(x)$ (by $x>n_0$), so $(x,y)\in E^\alpha_k$ implies $y\in(E^\alpha_k)_{(x)}=C^\alpha_{k,x}\subseteq \bigcup_{i\leq g(x)}C^\alpha_{i,x}\subseteq B^x_{g(x)}$ (as $g(x)>f_\alpha(x)$), so $(x,y)\in\{x\}\times B^x_{g(x)}\subseteq A_k$.
\end{itemize}
This finishes the entire proof.
\end{proof}

\begin{theorem}
\label{otimes}
\label{thm:bnumbers-for-Fubini-products}
 Let $\I,\J$ be ideals on $\omega$.
\begin{enumerate}
\item $\bnumber_s(\I\otimes\J) = \bnumber_s(\I)$.\label{thm:bnumbers-for-Fubini-products:item:bs}
\item $\bs(\I\otimes\J)=\min(\bs(\I), \bs(\J))$.\label{thm:bnumbers-for-Fubini-products:item:bsigma}
\item 
$\add_\omega(\I\otimes\J)\leq\min\{\add_\omega(\I), \add_\omega(\J)\}$\label{thm:bnumbers-for-Fubini-products:item:add-omega}
\end{enumerate}
\end{theorem}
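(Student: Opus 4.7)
The plan is to derive items (2) and (3) immediately from the three preceding lemmas, and then handle item (1) by a dedicated argument that mirrors those lemmas. For item (2), combining Lemma~\ref{lem:bnumbers-for-Fubini-products-for-left-ideal}(\ref{lem:bnumbers-for-Fubini-products-for-left-ideal:item:bsigma}) and Lemma~\ref{lem:bnumbers-for-Fubini-products-for-right-ideal}(\ref{lem:bnumbers-for-Fubini-products-for-right-ideal:item:bsigma}) gives $\bs(\I\otimes\J)\leq\min\{\bs(\I),\bs(\J)\}$, and Lemma~\ref{lem:bnumbers-for-Fubini-products-for-left-right-ideal} gives the reverse inequality. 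For item (3), Lemma~\ref{lem:bnumbers-for-Fubini-products-for-left-ideal}(\ref{lem:bnumbers-for-Fubini-products-for-left-ideal:item:add-omega}) and Lemma~\ref{lem:bnumbers-for-Fubini-products-for-right-ideal}(\ref{lem:bnumbers-for-Fubini-products-for-right-ideal:item:add-omega}) directly yield $\add_\omega(\I\otimes\J)\leq\min\{\add_\omega(\I),\add_\omega(\J)\}$.

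For item (1), I would prove the two inequalities separately. For $\bnumber_s(\I\otimes\J)\leq\bnumber_s(\I)$, take a witness $\{(E^\alpha_n):\alpha<\bnumber_s(\I)\}\subseteq\widehat{\cP}_\I$ and lift to $\widetilde{E}^\alpha_n=E^\alpha_n\times\omega$, which lies in $\widehat{\cP}_{\I\otimes\J}$. Given any $(\widetilde{A}_n)\in\cP_{\I\otimes\J}$, build an associated $(A_n)\in\cP_\I$ from the $\I$-sets $A_n^\star=\{m:(\widetilde{A}_n)_{(m)}\notin\J\}$ by taking the first non-$\J$ index for each $m$ and distributing leftover points (those whose every section lies in $\J$) across the $A_n$'s one at a time so that each $A_n$ remains in $\I$. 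Apply the witness property to obtain $\alpha$ with $U=\bigcup_n(A_{n+1}\cap\bigcup_{i\leq n}E^\alpha_i)\notin\I$, and verify that this forces $\widetilde{U}=\bigcup_n(\widetilde{A}_{n+1}\cap\bigcup_{i\leq n}\widetilde{E}^\alpha_i)\notin\I\otimes\J$ by checking that each ``good'' point $m\in U$ (one not chosen from the leftover set) contributes a non-$\J$ section to $\widetilde{U}$. For the reverse inequality $\bnumber_s(\I)\leq\bnumber_s(\I\otimes\J)$, take a witness $\{(\widetilde{E}^\alpha_n)\}$ and project by $E^\alpha_n=\{m:n=\min\{k:(\widetilde{E}^\alpha_k)_{(m)}\notin\J\}\}$; the minimum ensures pairwise disjointness, and each $E^\alpha_n\subseteq\{m:(\widetilde{E}^\alpha_n)_{(m)}\notin\J\}\in\I$. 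For any $(A_n)\in\cP_\I$, lift to $\widetilde{A}_n=A_n\times\omega\in\cP_{\I\otimes\J}$, apply the witness, and translate back using that if $\widetilde{U}_{(m)}=\bigcup_{n'>j(m)}(\widetilde{A}_{n'})_{(m)}\notin\J$ then finite closure of $\J$ forces some $(\widetilde{E}^\alpha_l)_{(m)}\notin\J$ with $l<n(m)$, placing $m$ into $\bigcup_n(A_{n+1}\cap\bigcup_{i\leq n}E^\alpha_i)$.

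The main obstacle lies in the first inequality: the set $C=\{m:\forall n\,(\widetilde{A}_n)_{(m)}\in\J\}$ may fail to be in $\I$, so the constructed partition $(A_n)$ must genuinely absorb $C$. The danger is that elements of $C$ placed into $A_n$ via the distribution may end up in $U$ for spurious reasons and have no corresponding non-$\J$ sections to contribute to $\widetilde{U}$. The fix is to be careful about \emph{where} one places points of $C$ (for instance, all into $A_0$ in the finite case, or $c_k$ into $A_{k+1}$ with $c_k$ chosen in order) so that any contribution to $U$ from $C$ still has a matching column of $\widetilde{U}_{(c_k)}$ that, being the complement of a finite union of $\J$-sets in $\omega$, automatically lies outside $\J$.
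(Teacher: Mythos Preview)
Your treatment of items (2) and (3) is identical to the paper's: both just invoke Lemmas~\ref{lem:bnumbers-for-Fubini-products-for-left-ideal}, \ref{lem:bnumbers-for-Fubini-products-for-right-ideal}, and \ref{lem:bnumbers-for-Fubini-products-for-left-right-ideal}. For item (1) the paper does not give an argument at all; it cites \cite[Theorem~5.13]{MR4472525}. Your sketch is thus a genuine addition, and the strategy is sound. A couple of remarks:

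For the inequality $\bnumber_s(\I\otimes\J)\leq\bnumber_s(\I)$, your handling of the leftover set $C$ is correct: once you place $c_k$ into $A_{k+1}$, any $c_k\in U$ satisfies $c_k\in E^\alpha_l$ for some $l\leq k$, and then $(\widetilde{U})_{(c_k)}=\bigcup_{n\geq l}(\widetilde{A}_{n+1})_{(c_k)}=\omega\setminus\bigcup_{n'\leq l}(\widetilde{A}_{n'})_{(c_k)}$, which is the complement of a finite union of $\J$-sets, hence not in $\J$. For $m\in U\setminus C$ the section $(\widetilde{U})_{(m)}$ contains $(\widetilde{A}_{n(m)})_{(m)}\notin\J$. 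So in fact every $m\in U$ gives a non-$\J$ section, and $U\subseteq\{m:(\widetilde{U})_{(m)}\notin\J\}$ directly.

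For the reverse inequality there is a typo in your formula: with $\widetilde{A}_n=A_n\times\omega$ and $j(m)$ the unique index with $m\in A_{j(m)}$, one has $(\widetilde{U})_{(m)}=\bigcup_{i<j(m)}(\widetilde{E}^\alpha_i)_{(m)}$, a union of $\widetilde{E}^\alpha$-sections rather than $\widetilde{A}$-sections. With this correction your argument goes through: if $(\widetilde{U})_{(m)}\notin\J$ then some $(\widetilde{E}^\alpha_l)_{(m)}\notin\J$ with $l<j(m)$, the minimal such index places $m\in E^\alpha_{l^*}$ with $l^*<j(m)$, and hence $m\in A_{j(m)}\cap\bigcup_{i\leq j(m)-1}E^\alpha_i\subseteq U$.
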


\begin{proof}
(\ref{thm:bnumbers-for-Fubini-products:item:bs})
See \cite[Theorem~5.13]{MR4472525}.

(\ref{thm:bnumbers-for-Fubini-products:item:bsigma}) and (\ref{thm:bnumbers-for-Fubini-products:item:add-omega})
It follows from Lemmas \ref{otimes-lemma1}, \ref{otimes-lemma2} and \ref{otimes-lemma3}.
\end{proof}

The following example shows that, in general, there is no way to calculate 
$\add_\omega(\I\otimes\J)$ using only values $\add_\omega(\I)$ and $\add_\omega(\J)$.

\begin{example}
\label{exm:add-omega-for-FINxFIN}
    $\add_\omega(\Fin\otimes\Fin) =  \bnumber$, but $\add_\omega(\Fin)=\infty$.
\end{example}

\begin{proof}
The equality $\add_\omega(\Fin)=\infty$ follows from Proposition \ref{prop:bounds-for-bs}(\ref{prop:bounds-for-add-omega:item:add-omega-finite}) as $\Fin$ is countably generated.

Now, we  show  $\add_\omega(\Fin\otimes\Fin)\leq \bnumber$.
Let $\{f_\alpha:\alpha<\bnumber\}$ be an $\leq^*$-unbounded set in $\omega^\omega$.
For each $\alpha$, we define $A_\alpha=\{(n,k)\in \omega^2: k\leq f_\alpha(n)\}$.
Then $\{A_\alpha:\alpha<\bnumber\}\subseteq \Fin\otimes\Fin$, and we claim that for every $(B_n)\in (\Fin\otimes\Fin)^\omega$ there is $\alpha$ with $A_\alpha\not\subseteq B_n$.
Indeed, take any 
$(B_n)\in (\Fin\otimes\Fin)^\omega$ and suppose, for sake of contradiction, that for every  $\alpha$ there is $n\in \omega$ with $A_\alpha\subseteq B_n$.
Since $B_n\in \Fin\otimes\Fin$, for every $n\in \omega$ there is $g_n\in \omega^\omega$ and $k_n\in \omega$ with $\max( (B_n)_{(k)})\leq g_n(k)$ for every $k\geq k_n$.
Let $g\in \omega^\omega$ be such that $g_n\leq^* g$ for every $n\in \omega$ (we can find $g$ because $\bnumber\geq \omega_1$).
Consequently, $f_\alpha\leq^*g$ for every $\alpha<\bnumber$, a contradiction.

Finally, we show that $\add_\omega(\Fin\otimes\Fin)\geq \bnumber$.
Let $\cA\subseteq\Fin\otimes\Fin$ with $|\cA|<\bnumber$.
If we find $(B_n)\in (\Fin\otimes\Fin)^\omega$ such that for every $A\in \cA$ there is $n\in \omega$ with $A\subseteq B_n$, then $\add(\Fin\otimes\Fin,\omega)\geq\bnumber$, and the proof will be finished.

For every $A\in \cA$ there is $f_A\in \omega^\omega$ and $n_A\in \omega$ such that $\max(A_{(n)})\leq f_A(n)$ for every $n\geq n_A$.
Since $|\cA|<\bnumber$, there is $g\in \omega^\omega$ such that $f_A\leq^* g$ for every $A\in \cA$. Hence, for each $A\in \cA$ there is $k_A\in\omega$ such that $f_A(n)\leq g(n)$ for all $n>k_A$. 

For every $n\in \omega$, we define $B_n=(n\times\omega)\cup\{(i,k)\in \omega^2: k\leq g(i)\}$.
Then $B_n\in\Fin\otimes \Fin$ and $A\subseteq B_{\max(n_A,k_A)}$ for every $A\in \cA$.
\end{proof}


\subsection{Some examples and comparisons}

Denote by $\cN$ the $\sigma$-ideal of Lebesgue null subsets of $\mathbb{R}$ and recall the definition of \emph{additivity} of $\cN$:
$$\add(\cN)=\min\left\{|\mathcal{A}|: \mathcal{A}\subseteq\cN\ \wedge\ \bigcup\mathcal{A}\notin\cN\right\}.$$
It is known that $\omega_1\leq\add(\cN)\leq \bnumber\leq\cc$ (see e.g.~\cite{MR2768685}).

\begin{theorem}\ 
\label{thm:value-of-bsigma-bs-add-omega-for-known-ideals}
\begin{enumerate}

    \item $\bs(\Fin) =\bnumber_s(\Fin) =\bnumber<\infty=\add_\omega(\Fin)$.\label{thm:value-of-bsigma-bs-add-omega-for-known-ideals:item:FIN}

    \item $\bs(\Fin\otimes\{\emptyset\}) =\bnumber_s(\Fin\otimes\{\emptyset\}) =\bnumber<\infty=\add_\omega(\Fin\otimes\{\emptyset\})$.\label{thm:value-of-bsigma-bs-add-omega-for-known-ideals:item:FINxEMPTY}

    \item $\bs(\I_d)= \add_\omega(\I_d)  =\add(\cN)\leq \bnumber=\bnumber_s(\I_d)$.\label{thm:value-of-bsigma-bs-add-omega-for-known-ideals:item:density}

    \item $\bs(\I_{1/n})=\add_\omega(\I_{1/n})=\add(\cN)\leq \bnumber =\bnumber_s(\I_{1/n})$.\label{thm:value-of-bsigma-bs-add-omega-for-known-ideals:item:summable}

    \item $\bs(\fin\otimes\fin)=\bnumber_s(\fin\otimes\fin)=\add_\omega(\fin\otimes\fin)=\bb$.\label{thm:value-of-bsigma-bs-add-omega-for-known-ideals:item:FINxFIN} 

   \item $\bs(\{\emptyset\}\otimes\fin)=\bnumber_s(\{\emptyset\}\otimes\fin)= \add_\omega(\{\emptyset\}\otimes\fin) =\bb$.\label{thm:value-of-bsigma-bs-add-omega-for-known-ideals:item:EMPTYxFIN} 
   
    \item $\bs(\cS)=\bnumber_s(\cS)=\add_\omega(\cS)=\omega_1$.\label{thm:value-of-bsigma-bs-add-omega-for-known-ideals:item:Solecki}    
\end{enumerate}
\end{theorem}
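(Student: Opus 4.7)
My overall plan is to exploit the identity $\bs(\I)=\min\{\bnumber_s(\I),\add_\omega(\I)\}$ from Theorem~\ref{thm:bsigma-leq-bs}(\ref{thm:bsigma-leq-min-bs-and-add-omega:item-for-one-ideal}): once $\bnumber_s(\I)$ and $\add_\omega(\I)$ are both in hand for a given ideal $\I$, $\bs(\I)$ is just their minimum. The $\bnumber_s$-side is essentially off-the-shelf: $\bnumber_s(\Fin)=\bnumber$ is Proposition~\ref{prop:bounds-for-bs}(\ref{prop:bounds-for-bs:item:equals-b}); $\bnumber_s(\Fin\otimes\Fin)=\bnumber_s(\Fin)=\bnumber$ follows from Theorem~\ref{thm:bnumbers-for-Fubini-products}(\ref{thm:bnumbers-for-Fubini-products:item:bs}); and the remaining values $\bnumber_s(\I_d)=\bnumber_s(\I_{1/n})=\bnumber_s(\{\emptyset\}\otimes\Fin)=\bnumber$ and $\bnumber_s(\cS)=\omega_1$ I would read off from the catalogue in~\cite{MR4472525}. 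The value $\bnumber_s(\Fin\otimes\{\emptyset\})$ will fall out at the very end as a consequence of the identity.

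The $\add_\omega$-side breaks into three easy tools and one hard case. Proposition~\ref{prop:bounds-for-add-omega}(\ref{prop:bounds-for-add-omega:item:add-omega-finite}) immediately gives $\add_\omega(\Fin)=\add_\omega(\Fin\otimes\{\emptyset\})=\infty$, because both ideals are countably generated. Proposition~\ref{prop:for-Pideals-add-omega-equals-adds} reduces $\add_\omega$ to $\adds$ for the P-ideals $\I_d$, $\I_{1/n}$ and $\{\emptyset\}\otimes\Fin$, where we plug in the classical values $\adds(\I_d)=\adds(\I_{1/n})=\add(\cN)$ from~\cite{MR2777744} and the easy identification $\adds(\{\emptyset\}\otimes\Fin)=\bnumber$ obtained by identifying $A\in\{\emptyset\}\otimes\Fin$ with the function $a(n)=\max(A)_n+1$ and noticing that $|A\setminus B|=\omega$ iff $\{n:a(n)\geq b(n)\}$ is infinite, so witnesses translate exactly into $\leq^*$-unbounded families in $\omega^\omega$. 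Example~\ref{exm:add-omega-for-FINxFIN} handles $\add_\omega(\Fin\otimes\Fin)=\bnumber$. Feeding everything into the minimum formula disposes of items (1), (3)--(6); for instance $\bs(\I_d)=\min\{\bnumber,\add(\cN)\}=\add(\cN)$ because $\add(\cN)\leq\bnumber$. For item~(2) I would first observe $\add_\omega(\Fin\otimes\{\emptyset\})=\infty$, then prove $\bs(\Fin\otimes\{\emptyset\})=\bnumber$ directly: the upper bound is Proposition~\ref{prop:bounds-for-bsigma}(\ref{prop:bounds-for-bsigma:item:leq-b}); for the lower bound, given $\kappa<\bnumber$ and any $\{(E^\alpha_k):\alpha<\kappa\}\subseteq\cM_{\Fin\otimes\{\emptyset\}}$, the supports $F^\alpha_k=\{n:(E^\alpha_k)_n\neq\emptyset\}$ are increasing finite sets, so the functions $f_\alpha(k)=\max F^\alpha_k+1$ can be $\leq^*$-dominated by some $g\in\omega^\omega$, and then $A_k=g(k)\times\omega\in\cM_{\Fin\otimes\{\emptyset\}}$ dominates every $(E^\alpha_k)$. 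With $\bs=\bnumber$ and $\add_\omega=\infty$ in place, the identity forces $\bnumber_s(\Fin\otimes\{\emptyset\})=\bnumber$.

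The main obstacle is item~(7), the Solecki ideal $\cS$, which is neither countably generated nor a P-ideal, so none of the shortcuts above applies to $\add_\omega(\cS)$. Taking $\bnumber_s(\cS)=\omega_1$ from~\cite{MR4472525} and the free inequality $\add_\omega(\cS)\geq\omega_1$ from Proposition~\ref{prop:bounds-for-add-omega}(\ref{prop:bounds-for-add-omega:item:included-geq-omega-one}), the one real task is the upper bound $\add_\omega(\cS)\leq\omega_1$. I would fix $S\in[2^\omega]^{\omega_1}$ and set $\cA=\{G_x:x\in S\}$; given any countable $(B_n)\subseteq\cS$, each $B_n$ is contained in a finite union of generators $G_{y}$, so altogether only countably many $y_m\in 2^\omega$ are needed, and any $x\in S\setminus\{y_m:m\in\omega\}$ is a candidate witness. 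The key combinatorial step is then the claim that $G_x\not\subseteq G_{y_{i_1}}\cup\cdots\cup G_{y_{i_k}}$ whenever $x\notin\{y_{i_1},\ldots,y_{i_k}\}$; this reduces to producing a clopen $A\subseteq 2^\omega$ of Lebesgue measure $1/2$ with $x\in A$ and $A\cap\{y_{i_1},\ldots,y_{i_k}\}=\emptyset$, which is a standard clopen-separation argument (choose $N$ so large that $x\!\restriction\! N$ differs from every $y_{i_j}\!\restriction\! N$, start from the cylinder $[x\!\restriction\! N]$, and pad it out to measure $1/2$ within the clopen complement of $\{y_{i_1},\ldots,y_{i_k}\}$). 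Once this separation lemma is in place, $\bs(\cS)=\min\{\omega_1,\omega_1\}=\omega_1$ closes the proof.
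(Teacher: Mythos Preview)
Your argument is correct and follows essentially the same architecture as the paper: compute $\bnumber_s(\I)$ and $\add_\omega(\I)$ separately and read off $\bs(\I)$ from the minimum formula. The treatment of items (1), (3)--(7) matches the paper's proof almost line for line, including the direct construction for $\add_\omega(\cS)\leq\omega_1$ via the clopen-separation argument.

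There are two small points worth flagging. For item~(2) you reverse the paper's order: the paper cites $\bnumber_s(\Fin\otimes\{\emptyset\})=\bnumber$ from \cite{MR4472525} and then reads off $\bs$ via the minimum formula, whereas you prove $\bs(\Fin\otimes\{\emptyset\})=\bnumber$ directly and deduce $\bnumber_s$ from the identity. Both routes are valid; your direct argument is self-contained, though you should note that the dominating $g$ may be taken increasing so that $(A_k)=(g(k)\times\omega)$ actually lies in $\cM_{\Fin\otimes\{\emptyset\}}$. For item~(6), the equivalence you state, ``$|A\setminus B|=\omega$ iff $\{n:a(n)\geq b(n)\}$ is infinite'', is not literally correct (consider $(A)_n=\{0\}$, $(B)_n=\{1\}$); however, the translation you intend does work once you pass to sets of the form $\{(n,k):k<f(n)\}$ on one side and use $\max$ on the other, exactly as in the standard proof, so this is only a cosmetic slip. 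The paper simply cites \cite{MR2777744} for $\adds(\{\emptyset\}\otimes\Fin)=\bnumber$.
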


\begin{proof}
(\ref{thm:value-of-bsigma-bs-add-omega-for-known-ideals:item:FIN})
It follows from 
Proposition \ref{prop:bounds-for-bsigma}(\ref{prop:bounds-for-bsigma:item:FIN})
and \ref{prop:bounds-for-bs}(\ref{prop:bounds-for-bs:item:equals-b}) and Example~\ref{exm:add-omega-for-FINxFIN}.

(\ref{thm:value-of-bsigma-bs-add-omega-for-known-ideals:item:FINxEMPTY}) The equality $\add_\omega(\Fin\otimes\{\emptyset\})=\infty$ follows from Proposition \ref{prop:bounds-for-bs}(\ref{prop:bounds-for-add-omega:item:add-omega-finite}) as $\Fin\otimes\{\emptyset\}$ is countably generated. The equality $\bb_s(\Fin\otimes\{\emptyset\})=\bb$ follows from \cite[Example 5.15]{MR4472525} and $\bs(\Fin\otimes\{\emptyset\})=\bb$ follows from Theorem~\ref{thm:bsigma-leq-bs}.

(\ref{thm:value-of-bsigma-bs-add-omega-for-known-ideals:item:density}) and (\ref{thm:value-of-bsigma-bs-add-omega-for-known-ideals:item:summable}) 
    It is known that $\adds(\I_d)=\adds(\I_{1/n})=\add(\cN)$ (see e.g.~\cite{MR2777744})
    and $\bnumber_s(\I_d)=\bnumber_s(\I_{1/n})=\bnumber$ 
    (see \cite[Corollary~6.4]{MR4472525}).
Thus, the remaining inequalities follow from Proposition~\ref{prop:for-Pideals-add-omega-equals-adds}
and Corollary~\ref{cor:bsigma-for-P-ideals}

(\ref{thm:value-of-bsigma-bs-add-omega-for-known-ideals:item:FINxFIN})
It follows from item (\ref{thm:value-of-bsigma-bs-add-omega-for-known-ideals:item:FIN}), Theorem~\ref{thm:bnumbers-for-Fubini-products}(\ref{thm:bnumbers-for-Fubini-products:item:bs})(\ref{thm:bnumbers-for-Fubini-products:item:bsigma}) and Example~\ref{exm:add-omega-for-FINxFIN}.

(\ref{thm:value-of-bsigma-bs-add-omega-for-known-ideals:item:EMPTYxFIN})
It is known that $\adds(\{\emptyset\}\otimes\fin)=\bb$ (see e.g.~\cite{MR2777744})
and $\bnumber_s(\{\emptyset\}\otimes\Fin)=\bnumber$ (see \cite[Theorem~5.13]{MR4472525}).
Thus, the remaining inequalities follow from Proposition~\ref{prop:for-Pideals-add-omega-equals-adds}
and Corollary~\ref{cor:bsigma-for-P-ideals}

    (\ref{thm:value-of-bsigma-bs-add-omega-for-known-ideals:item:Solecki})
It is known that $\bnumber_s(\cS)=\omega_1$ (see \cite[Theorem 7.4]{MR4472525}).
Then, using Proposition~\ref{prop:bounds-for-bsigma}(\ref{prop:bounds-for-bsigma:item:geq-omega-one}) and Theorem~\ref{thm:bsigma-leq-bs}, we obtain 
$\bs(\cS)=\omega_1$. Below we show that $\add_\omega(\cS)=\omega_1$.

Let $Y\subseteq 2^\omega$ be any set of cardinality $\omega_1$. We claim that $\cA=\{G_y: y\in Y\}$, where $G_y=\{A\in\Omega: y\in A\}$, witnesses $\add_\omega(\cS)=\omega_1$. Let $(  B_n) \in\I^\omega$. Then for each $n\in\omega$ there are $k_n\in\omega$ and $x^n_0,\ldots,x^n_{k_n}\in 2^\omega$ such that $B_n\subseteq \bigcup_{i\leq k_n}G_{x^n_i}$. Since $|Y|=\omega_1$, we can find $y\in Y\setminus\{x^n_i: n\in\omega,i\leq k_n\}$. We will show that $G_y\not\subseteq B_n$ for all $n$.

Let $n\in\omega$. There is $k\in\omega$ such that $2^k>2k_n$ and $y\restriction k\neq x^n_i\restriction k$ for all $i\leq k_n$. Since $2^k>2k_n$, we can find pairwise distinct $y_j\in 2^k$, for $j<2^{k-1}-1$, such that $y\restriction k\neq y_j$ and $x^n_i\restriction k\neq y_j$ for all $i\leq k_n$. Then 
$$X=\{x\in 2^\omega: x\restriction k=y\restriction k\text{ or }x\restriction k=y_j\text{ for some }j<2^{k-1}-1\}\in\Omega$$
and $X\in G_y\setminus B_n$.
\end{proof}

By Theorem~\ref{thm:bsigma-leq-bs} we know that $\bs(\I)=\min\{\bb_s(\I),\add_\omega(\I)\}$ for every ideal $\I$. The above result shows that 
$$\bs(\I)=\bb_s(\I)<\add_\omega(\I)$$ 
for some P-ideal (item (\ref{thm:value-of-bsigma-bs-add-omega-for-known-ideals:item:FIN})) as well as for some non-P-ideal (item (\ref{thm:value-of-bsigma-bs-add-omega-for-known-ideals:item:FINxEMPTY})). Since $\add(\cN)<\bb$ is consistent (see e.g.~\cite{MR2768685}), we obtain that it is consistent that 
$$\bs(\I)=\add_\omega(\I)<\bb_s(\I)$$ 
for some P-ideals (items (\ref{thm:value-of-bsigma-bs-add-omega-for-known-ideals:item:density}) and (\ref{thm:value-of-bsigma-bs-add-omega-for-known-ideals:item:summable})). Next example shows that the latter is consistent also for some non-P-ideal.

\begin{example}
Consider the ideal $\I=\fin\otimes\cS$, which is not a P-ideal. By Theorems~\ref{otimes} and \ref{thm:value-of-bsigma-bs-add-omega-for-known-ideals} and Corollary~\ref{cor:bounds-for-bsigma} we have $\bs(\I)=\bs(\cS)=\omega_1$ and $\add_\omega(\I)=\omega_1$. On the other hand, $\bb_s(\I)=\bb_s(\fin)=\bb$ (by \cite[Theorems 4.2 and 5.13]{MR4472525}). It is known that $\omega_1<\bb$ is consistent (see e.g.~\cite{MR2768685}). Thus, consistently $\bs(\I)=\add_\omega(\I)<\bb_s(\I)$ also for non-P-ideals.
\end{example}


\section{Spaces not distinguishing  convergence can be of arbitrary cardinality}
\label{sec:spaces-of-arbitrary-cardinality}

In this section, we show (see e.g.~Corollary~\ref{cor:spaces-of-arbitrary-cardinality-may-distinguis-convergences}) that the properties ``$X\in (\text{$\I$-p,$\I$-$\sigma$-u})$'' 
``$X\in(\text{$\I$-p,$\I$-qn})$''
and 
``$X\in(\text{$\I$-qn,$\I$-$\sigma$-u})$''
are of the topological nature rather than set-theoretic.

\begin{lemma}
\label{lem:continuous-function-is-constant-everywhere-but-less-than-bs-many-points}
    Let $\I,\J$ be ideals on $\omega$ such that $\I\subseteq \J$.
Let $X$ be a topological space such that for each $f\in \cC(X)$ there is a set $Y\subseteq X$ such that $|Y|<\bs(\I,\J)$ and $f\restriction (X\setminus Y)$ is constant.
Then 
$$f_n\xrightarrow{\text{$\I$-p}}0 \implies f_n\xrightarrow{\text{$\J$-$\sigma$-u}}0
  \text{  for any sequence $(  f_n) $ in $\cC(X)$,}$$
\end{lemma}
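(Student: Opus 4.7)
The plan is to partition $X$ into a ``large'' piece on which every $f_n$ is constant and a ``small'' subspace of cardinality below $\bs(\I,\J)$, and to apply Theorem~\ref{thm:pointwise-implies-sigma-uniform:necessary-condition}(\ref{thm:pointwise-implies-sigma-uniform:necessary-condition:item}) only to the small piece, since on the large piece $\J$-uniform convergence will be immediate once the constants are shown to tend to $0$ along $\J$.

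Let $(f_n)$ be a sequence in $\cC(X)$ with $f_n\xrightarrow{\text{$\I$-p}}0$. The hypothesis supplies, for each $n$, a set $Y_n\subseteq X$ with $|Y_n|<\bs(\I,\J)$ and a constant $c_n\in\R$ such that $f_n(x)=c_n$ for every $x\in X\setminus Y_n$. If $|X|<\bs(\I,\J)$ the implication $(\ref{thm:pointwise-implies-sigma-uniform:necessary-condition:item-cardinal})\Rightarrow(\ref{thm:pointwise-implies-sigma-uniform:necessary-condition:item-functions})$ of Theorem~\ref{thm:pointwise-implies-sigma-uniform:necessary-condition}(\ref{thm:pointwise-implies-sigma-uniform:necessary-condition:item}) applied with $\K=\J$ already yields $f_n\xrightarrow{\text{$\J$-$\sigma$-u}}0$, so I would assume $|X|\geq\bs(\I,\J)$ and set $Z=\bigcup_{n\in\omega}Y_n$.

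The crucial step, which I expect to be the main technical obstacle, is the bound $|Z|<\bs(\I,\J)$. Here Proposition~\ref{prop:bounds-for-bsigma}(\ref{prop:bounds-for-bsigma:item:bsigma-has-uncountable-cofinality}) enters: because $\I\subseteq\J$, one has $\cf(\bs(\I,\J))\geq\omega_1$, and a countable union of sets of cardinality less than a cardinal of uncountable cofinality is still of cardinality less than that cardinal. Consequently $X\setminus Z\neq\emptyset$; picking any $x_0\in X\setminus Z$ gives $c_n=f_n(x_0)\xrightarrow{\I}0$, which upgrades to $c_n\xrightarrow{\J}0$ since $\I\subseteq\J$. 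Hence $f_n\restriction(X\setminus Z)$, being the constant $c_n$, converges $\J$-uniformly to $0$.

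It remains to cover $Z$ countably by pieces on which the $f_n$ converge $\J$-uniformly. For this I would endow $Z$ with the discrete topology (so that each $f_n\restriction Z$ is automatically continuous) and apply the implication $(\ref{thm:pointwise-implies-sigma-uniform:necessary-condition:item-cardinal})\Rightarrow(\ref{thm:pointwise-implies-sigma-uniform:necessary-condition:item-functions})$ of Theorem~\ref{thm:pointwise-implies-sigma-uniform:necessary-condition} to the sequence $(f_n\restriction Z)$, which still converges $\I$-pointwise to $0$. Since $|Z|<\bs(\I,\J)$, this produces a cover $Z=\bigcup_{k\in\omega}Z_k$ with $f_n\restriction Z_k\xrightarrow{\text{$\J$-u}}0$ for every $k$. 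Appending the set $X\setminus Z$ to this countable family furnishes a witnessing cover for $f_n\xrightarrow{\text{$\J$-$\sigma$-u}}0$. The argument hinges entirely on the cardinality reduction of the previous paragraph; without the uncountable cofinality of $\bs(\I,\J)$ the union $Z$ could a priori equal $\bs(\I,\J)$, at which point the appeal to Theorem~\ref{thm:pointwise-implies-sigma-uniform:necessary-condition} would fail.
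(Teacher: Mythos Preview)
Your proof is correct and follows essentially the same route as the paper: split $X$ into the countable union $\bigcup_n Y_n$ and its complement, use the uncountable cofinality of $\bs(\I,\J)$ from Proposition~\ref{prop:bounds-for-bsigma}(\ref{prop:bounds-for-bsigma:item:bsigma-has-uncountable-cofinality}) to bound the small piece, invoke Theorem~\ref{thm:pointwise-implies-sigma-uniform:necessary-condition}(\ref{thm:pointwise-implies-sigma-uniform:necessary-condition:item}) there, and observe that constant functions which converge $\J$-pointwise converge $\J$-uniformly on the complement. Two minor simplifications: the preliminary case $|X|<\bs(\I,\J)$ is unnecessary (if $X\setminus Z=\emptyset$ the uniform convergence on it is vacuous), and you need not pass to the discrete topology on $Z$, since the restrictions $f_n\restriction Z$ are already continuous in the subspace topology.
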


\begin{proof}
    Let $(f_n)$ be a sequence in $\cC(X)$ such that $f_n\xrightarrow{\text{$\I$-p}}0$.
For each $n\in\omega$ there is a set $Y_n\subseteq X$ such that $|Y_n|<\bs(\I,\J)$ and  $f_n\restriction (X\setminus Y_n)$ is constant. 
Let $Y = \bigcup\{Y_n:n\in\omega\}$ and put $Z=X\setminus Y$.

Since 
$f_n\xrightarrow{\text{$\I$-p}}0$
and $\I\subseteq\J$, we have 
$f_n\xrightarrow{\text{$\J$-p}}0$.

Since $f_n\restriction Z$ are constant for each $n$ and  
$f_n\restriction Z\xrightarrow{\text{$\J$-p}}0$, we obtain 
$f_n\restriction Z\xrightarrow{\text{$\J$-u}}0$.

Since $\bs(\I,\J)$ has uncountable cofinality (by Proposition~\ref{prop:bounds-for-bsigma}(\ref{prop:bounds-for-bsigma:item:bsigma-has-uncountable-cofinality})), 
we obtain 
 $|Y|<\bs(\I,\J)$.
 Thus, we can use Theorem~\ref{thm:pointwise-implies-sigma-uniform:necessary-condition} to obtain 
$f_n\restriction Y\xrightarrow{\text{$\J$-$\sigma$-u}}0$.

Since $X=Y\cup Z$, we obtain $f_n\xrightarrow{\text{$\J$-$\sigma$-u}}0$.
\end{proof}

\begin{lemma}
\label{lem:point-with-nghds-that-are-of-cardinality-co-but-less-than-bs-many-points}
    Let $\I,\J$ be ideals on $\omega$ such that $\I\subseteq \J$.
Let $X$ be a topological space such that there exists a point $p\in X$ with the property that $|X\setminus N|<\bs(\I,\J)$ for each neighborhood $N$ of $p$.
Then 
$$f_n\xrightarrow{\text{$\I$-p}}0 \implies f_n\xrightarrow{\text{$\J$-$\sigma$-u}}0
  \text{  for any sequence $(  f_n) $ in $\cC(X)$,}$$
\end{lemma}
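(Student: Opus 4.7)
The plan is to reduce to the previous lemma (Lemma~\ref{lem:continuous-function-is-constant-everywhere-but-less-than-bs-many-points}) by showing that the hypothesis on $p$ forces every continuous function on $X$ to be constant outside a set of cardinality less than $\bs(\I,\J)$.

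Fix $f\in \cC(X)$ and set $c=f(p)$. For each integer $k\geq 1$ the preimage $N_k=f^{-1}((c-1/k, c+1/k))$ is an open neighborhood of $p$, so by the assumption on $p$ we have $|X\setminus N_k|<\bs(\I,\J)$. Observe that
\[
\{x\in X : f(x)\neq c\} \;=\; \bigcup_{k\geq 1}\bigl(X\setminus N_k\bigr).
\]
By Proposition~\ref{prop:bounds-for-bsigma}(\ref{prop:bounds-for-bsigma:item:bsigma-has-uncountable-cofinality}) (which applies since $\I\subseteq\J$), the cardinal $\bs(\I,\J)$ has uncountable cofinality, so a countable union of sets of cardinality $<\bs(\I,\J)$ again has cardinality $<\bs(\I,\J)$. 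Hence the set $Y_f=\{x\in X:f(x)\neq c\}$ satisfies $|Y_f|<\bs(\I,\J)$, and $f\restriction (X\setminus Y_f)$ is constant (equal to $c$).

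This verifies the hypothesis of Lemma~\ref{lem:continuous-function-is-constant-everywhere-but-less-than-bs-many-points} for the space $X$, so that lemma yields the desired implication $f_n\xrightarrow{\text{$\I$-p}}0 \implies f_n\xrightarrow{\text{$\J$-$\sigma$-u}}0$ for every sequence $(f_n)$ in $\cC(X)$.

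The only non-routine point is recognizing that Proposition~\ref{prop:bounds-for-bsigma}(\ref{prop:bounds-for-bsigma:item:bsigma-has-uncountable-cofinality}) is exactly what makes step~3 go through; without uncountable cofinality of $\bs(\I,\J)$ the countable union $\bigcup_k(X\setminus N_k)$ could a priori reach cardinality $\bs(\I,\J)$, and the reduction to the previous lemma would break down. Everything else is an unpacking of continuity and the chosen neighborhood base of $p$.
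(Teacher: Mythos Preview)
Your proof is correct and follows essentially the same approach as the paper's: both verify the hypothesis of Lemma~\ref{lem:continuous-function-is-constant-everywhere-but-less-than-bs-many-points} by using continuity at $p$ to produce countably many neighborhoods $N_k$ with $|f(x)-f(p)|<1/k$ on $N_k$, and then invoke the uncountable cofinality of $\bs(\I,\J)$ (Proposition~\ref{prop:bounds-for-bsigma}(\ref{prop:bounds-for-bsigma:item:bsigma-has-uncountable-cofinality})) to bound the cardinality of $\bigcup_k(X\setminus N_k)$. Your choice of $N_k=f^{-1}((c-1/k,c+1/k))$ is slightly more explicit than the paper's abstract neighborhoods, but the argument is otherwise identical.
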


\begin{proof}
    Let $(f_n)$ be a sequence in $\cC(X)$ such that $f_n\xrightarrow{\text{$\I$-p}}0$.
We will show that we can apply Lemma~\ref{lem:continuous-function-is-constant-everywhere-but-less-than-bs-many-points} to the space $X$. 
Let $f:X\to\mathbb{R}$ be continuous. 
Using  continuity of $f$ only at the point $p$, for each $n\in \omega$ we find a neighborhood $N_n$ of $p$ such that 
$|f(p)-f(x)|<1/n$
for each $x\in N_n$.
Let $Y = X\setminus \bigcap\{N_n:n\in \omega\}$. 
Since $\bs(\I,\J)$ has uncountable cofinality (by Proposition~\ref{prop:bounds-for-bsigma:item}(\ref{prop:bounds-for-bsigma:item:bsigma-has-uncountable-cofinality})), 
we obtain $|Y|<\bs(\I,\J)$.
 Then 
$|f(p)-f(x)|<1/n$ for each $x\in X\setminus Y$ and each $n\in \omega$.
Consequently, $f\restriction (X\setminus Y)$ is constant with the value $f(p)$.
\end{proof}

The following theorem shows that one cannot strengthen Theorem~\ref{thm:pointwise-implies-sigma-uniform:sufficient-condition-for-discrete-space} to all normal spaces.

\begin{theorem}
\label{thm:pointwise-implies-sigma-uniform:sufficient-condition-for-discrete-space:NOT-true-for-normal-spaces}
Let $\I,\J$ be ideals on $\omega$ such that $\I\subseteq \J$.
There exists a Hausdorff compact (hence normal) space $X$ of arbitrary cardinality such that 
$$f_n\xrightarrow{\text{$\I$-p}}0 \implies f_n\xrightarrow{\text{$\J$-$\sigma$-u}}0
  \text{  for any sequence $(  f_n) $ in $\cC(X)$.}$$
\end{theorem}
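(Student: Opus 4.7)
The plan is to exhibit, for each infinite cardinal $\kappa$, an explicit compact Hausdorff space of cardinality $\kappa$ that satisfies the hypothesis of Lemma~\ref{lem:point-with-nghds-that-are-of-cardinality-co-but-less-than-bs-many-points}, and then simply invoke that lemma. For finite cardinalities the statement is trivial, so the content is in the infinite case.

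The natural candidate is the Alexandroff one-point compactification of a discrete space. Concretely, I would fix a cardinal $\kappa$, take $D$ to be the discrete topological space on the underlying set $\kappa$, and let
\[
X = D \cup \{\infty\},
\]
topologised so that all subsets of $D$ are open, and a set containing $\infty$ is open iff its complement in $D$ is finite. Then $X$ is compact Hausdorff (this is the standard Alexandroff construction), hence normal, and $|X|=\kappa$, which covers arbitrary cardinality.

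Next I would verify that the point $p = \infty$ satisfies the hypothesis of Lemma~\ref{lem:point-with-nghds-that-are-of-cardinality-co-but-less-than-bs-many-points}. By the definition of the topology, every neighborhood $N$ of $\infty$ contains a basic open set whose complement in $X$ is finite, so $|X \setminus N|<\omega$. Since $\I\subseteq\J$, Proposition~\ref{prop:bounds-for-bsigma}(\ref{prop:bounds-for-bsigma:item:geq-omega-one}) gives $\bs(\I,\J)\geq\omega_1>\omega$, hence $|X\setminus N|<\bs(\I,\J)$ for every neighborhood $N$ of $\infty$.

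With this, Lemma~\ref{lem:point-with-nghds-that-are-of-cardinality-co-but-less-than-bs-many-points} applies directly to $X$ and $p=\infty$, yielding the desired implication: for every sequence $(f_n)$ in $\cC(X)$, $f_n\xrightarrow{\text{$\I$-p}}0$ implies $f_n\xrightarrow{\text{$\J$-$\sigma$-u}}0$. There is no genuine obstacle here; the whole weight of the argument has already been placed in the two preceding lemmas, and the theorem is essentially a remark showing that the sufficient condition extracted there is realised on compact Hausdorff spaces of any cardinality. The only thing worth emphasising is that the same construction simultaneously handles all three non-distinguishing classes if one wishes, since the complement-of-neighborhood is finite (hence below $\omega_1$, and thus below all the bounding-type cardinals that appeared in Corollary~\ref{cor:bounds-for-bsigma}).
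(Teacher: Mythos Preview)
Your proposal is correct and follows essentially the same approach as the paper: the paper also takes the Alexandroff one-point compactification of a discrete space of the desired cardinality, observes that complements of neighborhoods of $\infty$ are finite (since compact subsets of a discrete space are finite), and then invokes Lemma~\ref{lem:point-with-nghds-that-are-of-cardinality-co-but-less-than-bs-many-points} together with Proposition~\ref{prop:bounds-for-bsigma}(\ref{prop:bounds-for-bsigma:item:geq-omega-one}).
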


\begin{proof}
Obviously every finite space $X$ has the required property. Let $D$ be an infinite (of arbitrary cardinality) discrete spaces. Then $D$ is a Hausdorff and locally compact space but not a compact space. Thus, the Alexandroff one-point compactification $X=D\cup\{\infty\}$ of $D$ is a Hausdorff compact space. In particular, $X$ is a normal space (see e.g.~\cite[Theorem~3.1.9]{MR1039321}).

We will show that we can apply Lemma~\ref{lem:point-with-nghds-that-are-of-cardinality-co-but-less-than-bs-many-points} to the space $X$. 
Recall that open neighborhoods of the point $\infty$ are of the form  $N=(D\setminus K)\cup \{\infty\}$ where $K$ is a compact subset of $D$ (see e.g.~\cite[Theorem~3.5.11]{MR1039321}).
Since every compact subset of $D$ is finite, we have that $X\setminus N$  is finite for every neighborhood $N$ of the point $\infty$. In particular, $|X\setminus N|<\bs(\I,\J)$ (by Proposition~\ref{prop:bounds-for-bsigma}(\ref{prop:bounds-for-bsigma:item:geq-omega-one})).
\end{proof}

In the above theorem, all but one point are isolated in the constructed spaces. Below, we show that there also are  required spaces (at least of cardinality up to the cardinality of the continuum) in which only countably many points are isolated. 

\begin{theorem}
\label{thm:pointwise-implies-sigma-uniform:sufficient-condition-for-discrete-space:NOT-true-for-normal-spaces:countably-many-isolated-points}
Let $\I,\J$ be ideals on $\omega$ such that $\I\subseteq \J$.
There exists a Hausdorff separable, sequentially compact, compact (hence  normal) space $X$ of  arbitrary  cardinality up to $\continuum$ such that only countably many points of $X$ are isolated and  
$$f_n\xrightarrow{\text{$\I$-p}}0 \implies f_n\xrightarrow{\text{$\J$-$\sigma$-u}}0
  \text{  for any sequence $(  f_n) $ in $\cC(X)$.}$$
\end{theorem}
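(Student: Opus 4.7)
The plan is to produce $X$ as the Alexandroff one-point compactification of a suitable Isbell--Mr\'{o}wka $\Psi$-space; this is the natural generalisation of the space used in Theorem~\ref{thm:pointwise-implies-sigma-uniform:sufficient-condition-for-discrete-space:NOT-true-for-normal-spaces}, allowing uncountably many non-isolated points while keeping $\omega$ as the (dense) set of isolated points.

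Fix a cardinal $\kappa$ with $\omega\leq\kappa\leq\continuum$; the case $\kappa\leq\omega$ is handled by taking $X=\omega+1$ with its order topology, since $|X|=\omega<\omega_1\leq\bs(\I,\J)$ by Proposition~\ref{prop:bounds-for-bsigma}(\ref{prop:bounds-for-bsigma:item:geq-omega-one}), so that Theorem~\ref{thm:pointwise-implies-sigma-uniform:necessary-condition}(\ref{thm:pointwise-implies-sigma-uniform:necessary-condition:item}) already gives the desired implication.  For $\omega<\kappa\leq\continuum$, I will pick an almost disjoint family $\mathcal{A}=\{A_\alpha:\alpha<\kappa\}$ of infinite subsets of $\omega$ (such families exist in ZFC for every such $\kappa$), form the Isbell--Mr\'{o}wka space $\Psi(\mathcal{A})=\omega\cup\mathcal{A}$ in the standard way (points of $\omega$ isolated, basic neighborhoods of $A_\alpha$ of the form $\{A_\alpha\}\cup(A_\alpha\setminus F)$ for $F\in\Fin$), and set $X=\Psi(\mathcal{A})\cup\{\infty\}$ to be its one-point compactification.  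Routine verifications give that $X$ is Hausdorff, compact (hence normal), of cardinality $\kappa$, separable (with $\omega$ dense), and that its isolated points are exactly the elements of $\omega$.

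The key structural observation to establish is that every compact subset $K$ of $\Psi(\mathcal{A})$ is countable: $K\cap\mathcal{A}$ is compact in the discrete subspace $\mathcal{A}$ and therefore a finite set $\{A_{\alpha_1},\dots,A_{\alpha_m}\}$, while $K\setminus\bigcup_{j\leq m}(\{A_{\alpha_j}\}\cup A_{\alpha_j})$ is a compact discrete subset of $\omega$ and hence finite, so $K\subseteq \bigcup_{j\leq m}(\{A_{\alpha_j}\}\cup A_{\alpha_j})\cup F$ for a finite $F$.  From this I will draw two consequences.  First, sequential compactness: a sequence in $X$ with an infinite tail in $\omega$ either has infinite intersection with some $A_\alpha$ (and then a subsequence converges to $A_\alpha$) or meets each $A_\alpha$ finitely, in which case by the structural observation its intersection with any compact $K\subseteq\Psi(\mathcal{A})$ is finite and a subsequence converges to $\infty$; sequences with infinite tail in $\mathcal{A}\cup\{\infty\}$ converge to $\infty$ since $\mathcal{A}$ is closed discrete in $X$.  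Second, every basic neighborhood $N=\{\infty\}\cup(\Psi(\mathcal{A})\setminus K)$ of $\infty$ satisfies $|X\setminus N|=|K|\leq\omega<\omega_1\leq\bs(\I,\J)$, again by Proposition~\ref{prop:bounds-for-bsigma}(\ref{prop:bounds-for-bsigma:item:geq-omega-one}).

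Applying Lemma~\ref{lem:point-with-nghds-that-are-of-cardinality-co-but-less-than-bs-many-points} with $p=\infty$ then yields the required implication for every sequence $(f_n)$ in $\cC(X)$.  The main obstacle is precisely the countability-of-compact-sets lemma above; it is what lets the ``small neighborhood complements at a single point'' strategy of Lemma~\ref{lem:point-with-nghds-that-are-of-cardinality-co-but-less-than-bs-many-points} survive the passage from the one-non-isolated-point setting of Theorem~\ref{thm:pointwise-implies-sigma-uniform:sufficient-condition-for-discrete-space:NOT-true-for-normal-spaces} to spaces with arbitrarily many non-isolated points.
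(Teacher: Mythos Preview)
Your proposal is correct and follows essentially the same approach as the paper: both take the one-point compactification of an Isbell--Mr\'{o}wka space $\Psi(\mathcal{A})$ over an almost disjoint family of the desired size, verify that compact subsets of $\Psi(\mathcal{A})$ are countable so that $\infty$ has co-countable neighborhoods, and then apply Lemma~\ref{lem:point-with-nghds-that-are-of-cardinality-co-but-less-than-bs-many-points}. The paper cites the topological properties (Hausdorff, compact, sequentially compact, separable) from the literature, whereas you sketch the sequential compactness argument explicitly; otherwise the arguments coincide.
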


\begin{proof}
Obviously every finite space $X$ has the required property. 
Let $\cA$ be an infinite (of arbitrary cardinality up to $\continuum$) almost disjoint family $\cA$ of infinite subsets of $\omega$ (see e.g.~\cite[Lemma~9.21]{MR1940513}).

Let $\Psi(\cA) = \omega \cup \cA$
and introduce a topology on $\Psi(\cA)$ as follows:
 the points of $\omega$
are isolated and a basic neighborhood of $A\in\cA$ has the form $\{A\}\cup
(A\setminus F)$ with $F$ finite.

Let 
$\Phi(\cA)  = \Psi(\cA)\cup \{\infty\}$
be the Alexandroff one-point compactification of $\Psi(\cA)$. 
It is known (see e.g.~\cite{MR3822423})   
that  $\Phi(\cA)$ is Hausdorff, compact, sequentially compact and separable.

We will show that we can apply Lemma~\ref{lem:point-with-nghds-that-are-of-cardinality-co-but-less-than-bs-many-points} to the space $\Phi(\cA)$. 
Recall that open neighborhoods of the point $\infty$ are of the form  $U=(\Psi(\cA)\setminus K)\cup \{\infty\}$ where $K$ is a compact subset of $\Psi(\cA)$ (see e.g.~\cite[Theorem~3.5.11]{MR1039321}).
Since for every compact subset $K$ of $\Psi(\cA)$, 
both sets 
$K\cap \cA$ and $(K \cap \omega) \setminus \bigcup \{A : A\in K\cap \cA\}$  are finite (see e.g.~\cite{MR3822423}),
we obtain that 
$\Phi(\cA)\setminus N$  is countable for every neighborhood $N$ of the point $\infty$. In particular, $|\Phi(\cA)\setminus N|<\bs(\I,\J)$ (by Proposition~\ref{prop:bounds-for-bsigma}(\ref{prop:bounds-for-bsigma:item:geq-omega-one})).
\end{proof}

\begin{corollary}
\label{cor:spaces-of-arbitrary-cardinality-may-distinguis-convergences}
For every ideal $\I$ the classes  ($\I$-p,$\I$-$\sigma$-u), ($\I$-p,$\I$-qn) and ($\I$-qn,$\I$-$\sigma$-u)  contain spaces of arbitrary cardinality.
\end{corollary}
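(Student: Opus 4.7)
The plan is to read off the corollary directly from the two preceding theorems once we observe that the chain of implications from Proposition~\ref{prop:easy-implications-one-ideal}(\ref{prop:easy-implications-one-ideal:item}) collapses all three convergences on the spaces produced there.

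First, I would apply Theorem~\ref{thm:pointwise-implies-sigma-uniform:sufficient-condition-for-discrete-space:NOT-true-for-normal-spaces} with $\J=\I$ (the hypothesis $\I\subseteq\J$ is trivially satisfied). This yields, for an arbitrarily prescribed cardinality, a Hausdorff compact (hence normal) space $X$ such that
\[
f_n\xrightarrow{\text{$\I$-p}}0 \ \Longrightarrow\ f_n\xrightarrow{\text{$\I$-$\sigma$-u}}0
\]
for every sequence $(f_n)$ in $\cC(X)$.

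Second, I would invoke Proposition~\ref{prop:easy-implications-one-ideal}(\ref{prop:easy-implications-one-ideal:item}), which gives the unconditional implications
\[
f_n\xrightarrow{\text{$\I$-$\sigma$-u}}0 \ \Longrightarrow\ f_n\xrightarrow{\text{$\I$-qn}}0 \ \Longrightarrow\ f_n\xrightarrow{\text{$\I$-p}}0
\]
on any topological space. Chaining these with the implication obtained in the previous step shows that on $X$ the three notions $\I$-p, $\I$-qn and $\I$-$\sigma$-u are pairwise equivalent for continuous sequences. Hence $X$ simultaneously lies in the classes $(\text{$\I$-p,$\I$-$\sigma$-u})$, $(\text{$\I$-p,$\I$-qn})$ and $(\text{$\I$-qn,$\I$-$\sigma$-u})$, and since the cardinality of $X$ was arbitrary, this establishes the corollary. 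There is essentially no obstacle here: all the real work is in Theorem~\ref{thm:pointwise-implies-sigma-uniform:sufficient-condition-for-discrete-space:NOT-true-for-normal-spaces} (one-point compactifications of discrete spaces) and in Proposition~\ref{prop:easy-implications-one-ideal}.

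If one wished for a more refined construction in which only countably many points are isolated, one could use Theorem~\ref{thm:pointwise-implies-sigma-uniform:sufficient-condition-for-discrete-space:NOT-true-for-normal-spaces:countably-many-isolated-points} instead, which produces the $\Psi$-like Isbell--Mr\'{o}wka--Alexandroff compactifications of size up to $\continuum$; the same two-line argument applies verbatim.
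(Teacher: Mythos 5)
Your proposal is correct and follows essentially the same route as the paper: take the space from Theorem~\ref{thm:pointwise-implies-sigma-uniform:sufficient-condition-for-discrete-space:NOT-true-for-normal-spaces} with $\J=\I$ and combine the resulting implication with Proposition~\ref{prop:easy-implications-one-ideal}. The only cosmetic difference is that the paper passes from $X\in(\text{$\I$-p,$\I$-$\sigma$-u})$ to the other two classes via Corollary~\ref{cor:only-one-class}, whereas you chain the implications directly, which is exactly how that corollary is proved anyway.
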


\begin{proof}
Let $\I$ be an ideal and $X$ be a space from Theorem~\ref{thm:pointwise-implies-sigma-uniform:sufficient-condition-for-discrete-space:NOT-true-for-normal-spaces}.
Then 
$$f_n\xrightarrow{\text{$\I$-p}}0 \implies f_n\xrightarrow{\text{$\I$-$\sigma$-u}}0
  \text{  for any sequence $(  f_n) $ in $\cC(X)$.}$$
On the other hand, by Proposition~\ref{prop:easy-implications-one-ideal} we have
$$f_n\xrightarrow{\text{$\I$-$\sigma$-u}}0 \implies f_n\xrightarrow{\text{$\I$-p}}0
  \text{  for any sequence $(  f_n) $ in $\cC(X)$.}$$
Thus, $X\in (\text{$\I$-p,$\I$-$\sigma$-u})$.  
Now, Corollary~\ref{cor:only-one-class}  implies that  
$X\in (\text{$\I$-p,$\I$-qn})$ and $X\in (\text{$\I$-qn,$\I$-$\sigma$-u})$.
\end{proof}


\subsection{Subsets of reals  not distinguishing  convergence}

Obviously, countable subspaces of $\R$ are in 
the classes  ($\I$-p,$\I$-$\sigma$-u), ($\I$-p,$\I$-qn) and ($\I$-qn,$\I$-$\sigma$-u).
Uncountable spaces constructed in the proof of Corollary~\ref{cor:spaces-of-arbitrary-cardinality-may-distinguis-convergences} are not homeomorphic to any subspace of $\R$ as those spaces contain uncountable discrete subspaces.
Below we show that consistently there is an uncountable subspace of $\R$ in the considered classes at least for the ideal  $\I=\{\emptyset\}\otimes\Fin$.

Recall that an uncountable set $S\subseteq R$ is called a \emph{Sierpinski set} if $S\cap N$ is countable for every Lebesgue null set $N\subseteq \R$. 

\begin{theorem}
\label{thm:Sierpinski-set-not-distinguishes-convergence}
    Let $\I=\{\emptyset\}\otimes\Fin$.
    \begin{enumerate}
        \item 
Every Sierpi\'{n}ski set belongs to the classes  ($\I$-p,$\I$-$\sigma$-u), ($\I$-p,$\I$-qn) and ($\I$-qn,$\I$-$\sigma$-u).\label{thm:Sierpinski-set-not-distinguishes-convergence:item}

\item Consistently (e.g.~under the Continuum Hypothesis), there exists  an uncountable subspace of $\R$ which belongs to the classes ($\I$-p,$\I$-$\sigma$-u), ($\I$-p,$\I$-qn) and ($\I$-qn,$\I$-$\sigma$-u).\label{thm:Sierpinski-set-not-distinguishes-convergence:item-CH}
    \end{enumerate}
\end{theorem}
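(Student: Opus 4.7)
Part~(\ref{thm:Sierpinski-set-not-distinguishes-convergence:item-CH}) is immediate from part~(\ref{thm:Sierpinski-set-not-distinguishes-convergence:item}): under the Continuum Hypothesis a Sierpi\'nski set exists via the classical transfinite induction well-ordering the Borel null subsets of $\R$ in type $\omega_1$ and picking points outside their running unions. For part~(\ref{thm:Sierpinski-set-not-distinguishes-convergence:item}), by Corollary~\ref{cor:only-one-class}(\ref{cor:only-one-class:nonuniform}) it suffices to verify $S\in(\text{$\I$-p},\text{$\I$-$\sigma$-u})$, and since Proposition~\ref{prop:easy-implications-one-ideal}(\ref{prop:easy-implications-one-ideal:item}) already gives ``$\I$-$\sigma$-u $\Rightarrow$ $\I$-p'', only the converse on $\cC(S)$ requires proof.

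Fix $(f_{n,m})_{n,m\in\omega}\subseteq \cC(S)$ with $f_{n,m}\xrightarrow{\text{$\I$-p}}0$; unpacking $\I=\{\emptyset\}\otimes\Fin$, this means $(f_{n,m}(x))_m\to 0$ for every $x\in S$ and $n\in\omega$. Using Lavrentiev's extension theorem I lift each $f_{n,m}$ to a continuous function $\tilde f_{n,m}$ defined on some $G_\delta$ set containing $S$ and intersect to obtain one $G_\delta$ set $D\supseteq S$ on which every $\tilde f_{n,m}$ is continuous. The set $C=\{x\in D:\forall n\,(\tilde f_{n,m}(x)\to 0\text{ as }m\to\infty)\}$ is Borel and contains $S$. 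Applying Egorov's theorem to $(\tilde f_{n,m})_m$ on $C\cap[-K,K]$ for each $K\in\omega$ and amalgamating, for every $n\in\omega$ I obtain a Lebesgue null Borel set $N_n$ together with an increasing family $(Y_{n,k})_k$ of Borel sets satisfying $\bigcup_k Y_{n,k}=C\setminus N_n$ and on which $\tilde f_{n,m}\to 0$ uniformly in $m$. Set $N=\bigcup_n N_n$; by the Sierpi\'nski property $|S\cap N|\le\omega$.

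For $x\in C\setminus N$ define $h(x)\in\omega^\omega$ by $h(x)(n)=\min\{k:x\in Y_{n,k}\}$; the main obstacle is to show that $h$ is essentially $\le^*$-bounded on $S\setminus N$. I construct $g\in\omega^\omega$ diagonally. Since $\lambda((C\setminus Y_{n,k})\cap[-K,K])\to 0$ as $k\to\infty$ for each fixed $n$ and $K$ (where $\lambda$ denotes Lebesgue measure), I may pick $g(n)$ so large that $\lambda((C\setminus Y_{n,g(n)})\cap[-K,K])<2^{-n-K}$ for every $K\le n$. For each fixed $K$ this gives $\sum_{n\ge K}\lambda((C\setminus Y_{n,g(n)})\cap[-K,K])<\infty$, so the Borel--Cantelli lemma yields $\lambda\bigl(\limsup_n(C\setminus Y_{n,g(n)})\cap[-K,K]\bigr)=0$. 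Letting $K\to\infty$, the Borel set $A=\{x\in C\setminus N:h(x)\not\le^* g\}\subseteq \limsup_n (C\setminus Y_{n,g(n)})$ has Lebesgue measure zero, whence $|S\cap A|\le\omega$ by the Sierpi\'nski property.

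To finish, write $S=(S\cap N)\cup(S\cap A)\cup(S\cap B)$ where $B=\{x\in C\setminus N:h(x)\le^* g\}$. The first two pieces are countable and can be covered by singletons, on each of which $\I$-uniform convergence trivially coincides with the given $\I$-pointwise convergence. The set $S\cap B$ is covered by the countable family $X_{N_0,M}=\{x\in S\cap B:h(x)(n)\le g(n)\text{ for all }n\ge N_0,\ h(x)(n)\le M\text{ for all }n<N_0\}$ indexed by $(N_0,M)\in\omega^2$. For every $n\in\omega$ the inclusion $X_{N_0,M}\subseteq Y_{n,\max(M,g(n))}$ (valid for $n\ge N_0$ via the $g(n)$-bound and for $n<N_0$ via the $M$-bound) ensures that $\tilde f_{n,m}\to 0$ uniformly on $X_{N_0,M}$ as $m\to\infty$, which is exactly $\I$-uniform convergence of $(f_{n,m})$ on $X_{N_0,M}$. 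Concatenating these countable covers produces the desired $\I$-$\sigma$-uniform cover of $S$.
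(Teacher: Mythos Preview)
Your argument is correct but takes a genuinely different route from the paper's. The paper invokes an external result (\cite[Theorem~5]{MR2270601}) specific to $\I=\{\emptyset\}\otimes\Fin$: given $(f_n)$ that is $\I$-pointwise convergent to $0$, one can find a single $A\in\I$ so that $(f_n:n\notin A)$ is $\Fin$-pointwise convergent to $0$ on all of $S$. This collapses the double array to a single ordinary sequence, after which one application of iterated Egorov plus the Sierpi\'nski property finishes; $\I$-$\sigma$-uniform convergence is recovered at the end because the deleted index set $A$ lies in $\I$.

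You avoid that citation and instead keep the full double array, running Egorov row by row to get decompositions $(Y_{n,k})_k$, and then use a measure-theoretic Borel--Cantelli diagonalization to manufacture a single $g\in\omega^\omega$ that $\le^*$-bounds the ``Egorov level'' function $h(x)$ off a null set. The remaining cover by the sets $X_{N_0,M}$ is exactly what is needed for $\I$-uniform convergence on each piece, since $\I$-uniform convergence here just means uniform convergence of each row. The trade-off: the paper's proof is shorter and cleaner once the cited reduction is granted, whereas your argument is fully self-contained and makes transparent why the Sierpi\'nski property interacts well with $\{\emptyset\}\otimes\Fin$ via a quantitative Egorov/Borel--Cantelli mechanism rather than via an abstract subsequence principle.
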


\begin{proof}
    (\ref{thm:Sierpinski-set-not-distinguishes-convergence:item})
Let $S\subseteq\R$ be a Sierpi\'{n}ski set. Without loss of generality we can assume that $S\subseteq [0,1]$. 
By Corollary~\ref{cor:only-one-class}, it is enough to show that $S\in (\text{$\I$-p,$\I$-$\sigma$-u})$.
Let $(f_n)$ be a sequence in $\cC(S)$ which is $\I$-pointwise convergent to zero.
By \cite[Theorem~5]{MR2270601}, there is a set $A\in \I$ such that the subsequence $(f_n:n\in \omega\setminus A)$ is $\Fin$-pointwise convergent to zero.
There are a $G_\delta$ set $G\subseteq [0,1]$ and continuous functions $g_n:G\to\R$ such that $S\subseteq G$ and $f_n = g_n\restriction S$ for every $n\in \omega\setminus A$ (see e.g.~\cite[Theorem~3.8]{MR1321597}).
It is not difficult to see that the set 
$B = \{x\in G: \text{$(g_n(x):n\in \omega\setminus A) $ is $\Fin$-convergent to zero}\}$
is Borel and $S\subseteq B$.
Applying repeatedly Egorov's theorem (see e.g.~\cite[Proposition 3.1.4]{MR3098996}) to the sequence $(g_n\restriction B:n\in \omega\setminus A)$, we find a sequence of pairwise disjoint Borel sets $\{C_k:k\in \omega\}$ such that $(g_n\restriction C_k:n\in \omega\setminus A)$ is uniformly convergent to zero
and $N=B\setminus \bigcup\{C_k:k\in \omega\}$ is Lebesgue null.
Then  $S\cap N$ is countable, so $(f_n\restriction (S\cap N): n\in \omega\setminus A)$ is $\sigma$-uniformly convergent to zero.
Consequently, $(f_n: n\in \omega\setminus A)$ is $\sigma$-uniformly convergent to zero.
Since $A\in \I$, we obtain that $(f_n:n\in \omega)$ is $\I$-$\sigma$-uniformly convergent to zero.

(\ref{thm:Sierpinski-set-not-distinguishes-convergence:item-CH})
    It follows from item~(\ref{thm:Sierpinski-set-not-distinguishes-convergence:item}) as under the Continuum Hypothesis there is a Sierpi\'{n}ski set (see e.g.~\cite[Theorem~2.2]{MR0776624}).
\end{proof}

\begin{question}
Let $\I$ be an arbitrary ideal. Do the classes  ($\I$-p,$\I$-$\sigma$-u), ($\I$-p,$\I$-qn) and ($\I$-qn,$\I$-$\sigma$-u) contain an uncountable subspace of $\R$?
\end{question}


\section{Bounding numbers of binary relations}
\label{sec:b-of-relations}

If $R$ is a binary relation, then by $\dom(R)$ and $\ran(R)$ we denote the domain and range of $R$, respectively, i.e.
$\dom(R)=\{x: \exists y\, ((x,y)\in R)\}$ and $\ran(R) = \{y: \exists x\, ((x,y)\in R)\}$.
A set $B\subseteq \dom(R)$ is called \emph{$R$-unbounded} if for every $y\in \ran(R)$ there is $x\in B$ with $(x,y)\notin R$.
Following Vojt\'{a}\v{s} \cite{MR1234291}, for a binary relation $R$ we define 
$$\bnumber(R) = \min\{|B|: \text{$B$ is  an $R$-unbounded set}\}.$$

It is easy to see that 
the bounding  number $\bnumber$ is equal to the bounding number of the relation
$\leq^*$ on $\omega^\omega$ i.e.
$\bnumber = \bnumber(\leq^*)$.

\begin{definition}\ 
\begin{enumerate}

\item The binary relation  $\succeq$ is define by  $\dom(\succeq) =\ran(\succeq) =  \omega^\omega$ and 
$$x\succeq y \iff \left\{m\in\omega: \exists k\in\omega \, (x(k)\leq m< y(k))\right\}\in\fin.$$

\item The  binary relation 
$\leq^\omega$ is defined by 
$\dom(\leq^\omega) = 2^\omega$, 
$\ran(\leq^\omega) = (2^\omega)^\omega$ and 
$$x\leq^\omega (y_k) \iff \exists k\in \omega\, \forall n\in \omega (x(n)\leq y_k(n)).$$

\item 
For an ideal  $\I$ on $\omega$, 
 the binary relation $\leq_\I$ is defined by
$\dom(\leq_\I) =\omega^\omega$, $\ran(\leq_\I) = \omega^\omega$ and  
$$x\leq_\I y \iff \{n\in \omega: x(n)>y(n)\}\in \I.$$ 
In a similar manner we define $<_\I$, $\geq_\I$ and $>_\I$.

\end{enumerate}

\end{definition}

\begin{proposition}
The relation  $\succeq$ is a preorder on $\omega^\omega$ i.e.~the relation $\succeq$ is reflexive and transitive. 
\end{proposition}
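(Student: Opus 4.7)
The plan is to verify the two defining properties of a preorder directly from the definition of $\succeq$. Reflexivity is immediate: for any $x\in \omega^\omega$ and any $k\in\omega$, the condition $x(k)\leq m<x(k)$ is vacuous, so $\{m\in\omega:\exists k\, (x(k)\leq m<x(k))\}=\emptyset\in\fin$, which gives $x\succeq x$.

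For transitivity, suppose $x\succeq y$ and $y\succeq z$, and set
$$A=\{m\in\omega:\exists k\, (x(k)\leq m<y(k))\},\quad B=\{m\in\omega:\exists k\, (y(k)\leq m<z(k))\},$$
so that $A,B\in\fin$ by hypothesis. I would establish that
$$C:=\{m\in\omega:\exists k\, (x(k)\leq m<z(k))\}\subseteq A\cup B,$$
which forces $C\in\fin$ and hence $x\succeq z$. To prove the inclusion, fix $m\in C$ and let $k$ be a witness, i.e. $x(k)\leq m<z(k)$. The idea is to split on where $m$ falls relative to $y(k)$: if $m<y(k)$, then the \emph{same} $k$ witnesses $x(k)\leq m<y(k)$, so $m\in A$; otherwise $m\geq y(k)$, and together with $m<z(k)$ the same $k$ witnesses $y(k)\leq m<z(k)$, so $m\in B$.

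There is no genuine obstacle here; the only point worth flagging is that the case split must re-use the same witness $k$ produced by membership in $C$, rather than different indices for $A$ and $B$, since that is what makes the two inequalities combine to place $m$ in exactly one of the two finite sets.
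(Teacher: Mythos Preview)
Your proof is correct and essentially the same as the paper's: both establish that the set $C$ associated to the pair $(x,z)$ is contained in $A\cup B$, the student by a direct inclusion argument splitting on whether $m<y(k)$, the paper by the contrapositive (taking $n=\max(A\cup B)$ and showing $m>n$ implies $m\notin C$). The underlying case analysis on the position of $m$ relative to the middle function is identical.
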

\begin{proof}
Since reflexivity is obvious, we show only transitivity.
If $f\succeq g$ and $g\succeq h$, then put:
$n=\max 
(\{m\in\omega: \exists k\in\omega \, (f(k)\leq m< g(k))\}
\cup 
\{m\in\omega: \exists k\in\omega\, (g(k)\leq m< h(k))\}).$
Fix any $m>n$. Then for each $k\in\omega$, if $m<h(k)$ then also $m< g(k)$, and consequently $m< f(k)$. Hence, 
$\{m\in\omega: \exists k\in\omega\, (f(k)\leq m< h(k))\}\subseteq \{i\in \omega:i\leq n\}\in\fin.$
\end{proof}

\begin{notation}
For an ideal $\I$, we define
\begin{equation*}
    \begin{split}
\cC_\I & = \{x\in 2^\omega: x^{-1}[\{1\}]\in \I\} = \{\chf_A:A\in \I\},
\\
\cD_\I
&=
\{x\in\omega^\omega: x^{-1}[\{n\}]\in\I \text{ for every $n\in \omega$}\}.
    \end{split}
\end{equation*}
\end{notation}

\begin{theorem}
\label{thm:bs-ideal-as-Vojtas-b}
\label{thm:b-sigma-as-Vojtas-b}
\label{thm:add-omega-as-Vojtas-b}
\label{bIII=bVojtas}
Let $\I,\J,\K$ be ideals on $\omega$.

\begin{enumerate}

\item $\bs(\I,\J)=\bb(\succeq\cap(\cD_\I\times\cD_\J))$.\label{thm:b-sigma-as-Vojtas-b:item}

\item $\add_\omega(\I,\J)=\bnumber(\leq^\omega\cap(\cC_\I\times (\cC_\J)^\omega))$.\label{thm:add-omega-as-Vojtas-b:item} 

\item \label{thm:bs-ideal-as-Vojtas-b:item}
$\bb_s(\I,\J,\K)=\bb(\geq_\I\cap(\cD_\K\times\cD_\J))$.
If $\J\cap \K\subseteq\I$, then  $\bb_s(\I,\J,\K)=\bb(>_\I\cap(\cD_\K\times\cD_\J))$.

\end{enumerate}
\end{theorem}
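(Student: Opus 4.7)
The common strategy for all three parts is to set up a bijective correspondence between the combinatorial data in the definition of the cardinal on the left and the functions in the domain/range of the relation on the right, and then verify by direct calculation that the witness condition in the defining formula matches the negation of the binary relation appearing on the right. The one technical wrinkle is that the natural encoding gives bijections only to subclasses of the defining data (namely, those sequences whose union is $\omega$), so each equality must be preceded by a routine WLOG argument showing that restricting both the witnesses and the test sequences to cover $\omega$ does not change the cardinal.

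For part (1), I would encode $x\in\cD_\I$ as $(E_k^x)_k\in\cM_\I$ where $E_k^x=\{n:x(n)\leq k\}$; this is clearly monotone increasing with union $\omega$, and each $E_k^x$ is a finite union of preimages $x^{-1}[\{i\}]$, hence in $\I$. The key computation is that $E_k^x\not\subseteq E_k^y$ iff $\exists n\,(x(n)\leq k<y(n))$, so $\exists^\infty k\,(E_k^x\not\subseteq E_k^y)$ is exactly the negation of $x\succeq y$ after renaming the bound variables in the definition of $\succeq$. The WLOG reduction to covering sequences proceeds by the substitution $E_n\mapsto E_n\cup\{0,\ldots,n\}$ (and similarly for $A_n$), which preserves membership in $\cM_\I$ (resp.\ $\cM_\J$) and does not weaken the failure-of-inclusion condition. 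Part (2) is essentially immediate from the bijection $A\mapsto\chf_A$: one checks that $\chf_A\leq^\omega(\chf_{B_k})_k$ iff $A\subseteq B_k$ for some $k$, after which the two defining conditions match verbatim.

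For part (3), encode $x\in\cD_\K$ as $\phi(x)=(x^{-1}[\{n\}])_n\in\widehat{\cP}_\K$ (the partition of $\omega$ by the fibers of $x$) and $y\in\cD_\J$ as $\psi(y)=(y^{-1}[\{n\}])_n\in\cP_\J$. The crucial algebraic identity is
$$\bigcup_n\Bigl(A_{n+1}\cap\bigcup_{i\leq n}E_i\Bigr)=\{m:x(m)<y(m)\},$$
which shows directly that $\bigcup_n(A_{n+1}\cap\bigcup_{i\leq n}E_i)\notin\I$ is exactly $\neg(x\geq_\I y)$. The WLOG step here is the main technical nuisance: given $(E_n)\in\widehat{\cP}_\K$ with complement $C=\omega\setminus\bigcup_n E_n$ possibly not in $\K$, enumerate $C=\{c_0,c_1,\ldots\}$ (finite tail if $C$ is finite) and replace $E_n$ by $E_n\cup\{c_n\}$: each $E_n'$ is a one-point modification, hence still in $\K$; disjointness is preserved; and $\bigcup_n E_n'=\omega$. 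Enlarging the $E_n$'s only enlarges the left-hand side of the defining condition, so the witness property is preserved.

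For the second assertion of (3), $x>_\I y\Rightarrow x\geq_\I y$ gives the trivial inequality $\bnumber(>_\I\cap(\cD_\K\times\cD_\J))\leq\bnumber(\geq_\I\cap(\cD_\K\times\cD_\J))$. For the reverse, given a $>_\I$-unbounded $B\subseteq\cD_\K$ and any $y\in\cD_\J$, set $y'(n)=\max(y(n)-1,0)$; then $y'\in\cD_\J$ since $(y')^{-1}[\{0\}]=y^{-1}[\{0,1\}]\in\J$ and $(y')^{-1}[\{k\}]=y^{-1}[\{k+1\}]\in\J$ for $k\geq1$. The identity
$$\{n:x(n)\leq y'(n)\}=\{n:x(n)<y(n)\}\cup\bigl(x^{-1}[\{0\}]\cap y^{-1}[\{0\}]\bigr),$$
combined with the hypothesis $\J\cap\K\subseteq\I$ (which puts the second term in $\I$), shows that $x\geq_\I y$ implies $x>_\I y'$; contrapositively, a $>_\I$-witness at $y'$ is a $\geq_\I$-witness at $y$, so $B$ is $\geq_\I$-unbounded. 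The principal obstacle throughout is the variable bookkeeping in part (1), where the natural encoding swaps the roles of the outer and inner quantifiers relative to the verbatim form of $\succeq$.
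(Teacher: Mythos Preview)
Your proof is correct and, for parts (1) and (2), follows essentially the same route as the paper: the encodings $x\mapsto (x^{-1}[[0,k]])_k$ and $A\mapsto\chf_A$ are exactly what the paper uses, and your WLOG reduction via $E_n\mapsto E_n\cup\{0,\ldots,n\}$ corresponds to the paper's device $B^\alpha_k=(E^\alpha_k\cup\{k\})\setminus\bigcup_{i<k}B^\alpha_i$ for producing a genuine partition. One small imprecision: the phrase ``does not weaken the failure-of-inclusion condition'' needs to be unpacked differently for the two sides---enlarging the $E_n$'s preserves $E_n\not\subseteq A_n$, while on the test side the point is rather that any $(A_n)$ is dominated by its covering modification, so a family unbounded against covering tests is unbounded against all tests. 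This is routine and does not affect correctness.

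For part (3) you do more than the paper, which simply cites \cite[Theorem~3.10]{MR4472525}. Your direct argument via the fiber partition $\phi(x)=(x^{-1}[\{n\}])_n$ and the identity $\bigcup_n(A_{n+1}\cap\bigcup_{i\leq n}E_i)=\{m:x(m)<y(m)\}$ is clean and correct, as is the $y\mapsto y'=\max(y-1,0)$ trick for the second assertion (the displayed decomposition of $\{n:x(n)\leq y'(n)\}$ is right, and the hypothesis $\J\cap\K\subseteq\I$ is used exactly where you say).
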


\begin{proof}
(\ref{thm:b-sigma-as-Vojtas-b:item})
First, we show $\bs(\I,\J)\leq \bb(\succeq\cap(\cD_\I\times\cD_\J))$. 
Let $\{f_\alpha: \alpha<\bb(\succeq\cap(\cD_\I\times\cD_\J))\}$ be unbounded in $(\succeq\cap(\cD_\I\times\cD_\J))$. Define $E^\alpha_k=f_\alpha^{-1}[[0,k]]$ for each $k\in\omega$ and $\alpha<\bb(\succeq\cap(\cD_\I\times\cD_\J))$. Then $\cE=\{(  E^\alpha_k) : \alpha<\bb(\succeq\cap(\cD_\I\times\cD_\J))\}\subseteq\cM_\I$ as each $f_\alpha$ is in $\cD_\I$. We claim that $\cE$ witnesses $\bs(\I,\J)$.

Fix $(  A_k) \in\cM_\J$ and define $B_k=(A_k\cup\{k\})\setminus\bigcup_{i<k}B_i$. Then $(  B_k) $ is a partition of $\omega$ into sets belonging to $\J$. Define a function $g\in\omega^\omega$ by 
$$g(n)=k\ \Leftrightarrow\ n\in B_k.$$
Then $g\in\cM_\J$, so there is $\alpha<\bb(\succeq\cap(\cD_\I\times\cD_\J))$ such that $f_\alpha\not\succeq g$. Hence, there are infinitely many $m\in\omega$ such that $f_\alpha(n_m)\leq m<g(n_m)$ for some $n_m\in\omega$. Observe that in this case we have $n_m\in E^\alpha_m$ and $n_m\notin A_m$ (as $n_m\in A_m$ would imply $n_m\in \bigcup_{i\leq m}B_i$ and consequently $g(n_m)\leq m$).

Second, we show $\bs(\I,\J)\geq \bb(\succeq\cap(\cD_\I\times\cD_\J))$. 
Let $\{(  E^\alpha_k) : \alpha<\bs(\I,\J)\}\subseteq\cM_\I$ be a witness for $\bs(\I,\J)$. For each $\alpha<\bs(\I,\J)$ define $f_\alpha\in\omega^\omega$ by:
$$f_\alpha(n)=k\ \Leftrightarrow\ n\in B^\alpha_k,$$
where $B^\alpha_k=(E^\alpha_k\cup\{k\})\setminus \bigcup_{i<k}B^\alpha_i$. Note that each $f_\alpha$ is well defined and belongs to $\cD_\I$ as $(  B^\alpha_k) $ is a partition of $\omega$ into sets belonging to $\I$. We claim that $\{f_\alpha: \alpha<\bb(\I,\J)\}$ is unbounded in $(\succeq\cap(\cD_\I\times\cD_\J))$.

Fix any $g\in\cD_\J$ and define $A_k=g^{-1}[[0,k]]$. Then $(  A_k) \in\cM_\J$, so there is $\alpha<\bs(\I,\J)$ such that $E^\alpha_k\not\subseteq A_k$ for infinitely many $k\in\omega$. Note that if $n\in E^\alpha_k\setminus A_k$ for some $k\in\omega$, then $f_\alpha(n)\leq k$ (as $n\in E^\alpha_k\subseteq\bigcup_{i\leq k} B^\alpha_i$) and $k<g(n)$. Thus, there are infinitely many $k\in\omega$ such that $f_\alpha(n)\leq k<g(n)$ for some $n\in\omega$.

(\ref{thm:add-omega-as-Vojtas-b:item})
It easily follows from the fact that  $A\subseteq B \iff \chf_A(n)\leq \chf_B(n)$ for every $n\in \omega$.

(\ref{thm:bs-ideal-as-Vojtas-b:item}) See \cite[Theorem~3.10]{MR4472525}.
\end{proof}


\section{Subsets of reals  distinguishing  convergence}
\label{sec:subsets-of-R-distinguishing}

In this section, we  show (Theorem~\ref{thm:subset-of-R-not-distinguishing-convergence}) that, in a sense, the connection between cardinals $\bs(\I)$ ($\bnumber_s(\I)$, $\add_\omega(\I)$, resp.) and  
$\non(\text{$\I$-p,$\I$-$\sigma$-u})$ ($\non(\text{$\I$-p,$\I$-qn})$, $\non(\text{$\I$-qn,$\I$-$\sigma$-u})$, resp.)
is even deeper than that following from  the proof of Corollary \ref{cor:pointwise-versus-sigma-uniform:non}, as here we  obtain  subspaces of $\R$ as spaces which realize the minimum value of spaces not distinguishing the considered  convergences.

\begin{lemma}
\label{lemma-space}
\label{lem:subset-of-R-not-distinguishing-convergence}
Let $\I,\J,\K$ be ideals on $\omega$.
\begin{enumerate}

\item \label{lem:subset-of-R-not-distinguishing-convergence:pointwise-vs-sigma-uniform}
For each $n\in\omega$, let $f_n:\omega^\omega\to\mathbb{R}$ be given  by $f_n(x) = \frac{1}{x(n)+1},$
for all $x\in\omega^\omega$.
Then 
\begin{enumerate}
    \item $\forall x\in \omega^\omega\, (f_n(x)\xrightarrow{\I}0 \iff x\in\cD_\I)$,\label{lem:subset-of-R-not-distinguishing-convergence:pointwise-vs-sigma-uniform:pointwise}
    \item $\forall X\subseteq\cD_\I\, (f_n\restriction X\xrightarrow{\text{$\K$-$\sigma$-u}}0 \iff X \text{ is bounded in } (\succeq\cap(\cD_\I\times\cD_\K))$.\label{lem:subset-of-R-not-distinguishing-convergence:pointwise-vs-sigma-uniform:sigma-uniform}
\end{enumerate}

\item \label{lem:subset-of-R-not-distinguishing-convergence:qn-vs-sigma-uniform}
For each $n\in \omega$, we define $g_n: 2^\omega\to\R$ by 
$g_n(x)=x(n)$ for all $x\in 2^\omega$.
Then 
\begin{enumerate}
    \item $\forall X\subseteq 2^\omega\,( g_n\restriction X \xrightarrow{\text{$\J$-qn}}0 \iff X\subseteq \cC_\J)$,\label{lem:subset-of-R-not-distinguishing-convergence:qn-vs-sigma-uniform:qn} 
    \item $\forall X\subseteq\cC_\J\, (g_n\restriction X\xrightarrow{\text{$\K$-$\sigma$-u}}0 \iff X \text{ is bounded in } (\leq^\omega\cap(\cC_\J\times(\cC_\K)^\omega))$.\label{lem:subset-of-R-not-distinguishing-convergence:qn-vs-sigma-uniform:sigma-uniform}
\end{enumerate}

\item \label{lem:subset-of-R-not-distinguishing-convergence:pointwise-vs-qn}
For each $n\in \omega$, we define $h_n: \omega^\omega \to \R$ by 
$h_n(x)=\frac{1}{x(n)+1}$ for all $x\in \omega^\omega$.
Then 
\begin{enumerate}
    \item $\forall x\in \omega^\omega\, (h_n(x)\xrightarrow{\I}0 \iff x\in \cD_\I)$,\label{lem:subset-of-R-not-distinguishing-convergence:pointwise-vs-qn:pointwise} 
    \item $\forall X\subseteq \cD_\I \, (h_n\restriction X\xrightarrow{\text{$\J$-qn}}0 \iff X \text{ is bounded in } (\geq_\J\cap (\cD_\I\times \cD_\J)))$.\label{lem:subset-of-R-not-distinguishing-convergence:pointwise-vs-qn:qn}
\end{enumerate}

\end{enumerate}
\end{lemma}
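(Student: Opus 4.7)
The plan is to treat items (1), (2), (3) in parallel, as each splits into an ``(a)'' part which is a direct unpacking of ordinary $\I$-convergence (respectively $\J$-qn convergence) of the sequence of reals obtained by evaluating the explicit formula for $f_n, g_n, h_n$ at a single point, and a ``(b)'' part which translates $\K$-$\sigma$-uniform (respectively $\J$-qn) convergence on a whole set $X$ into boundedness with respect to the Vojt\'{a}\v{s}-type relation from Section~\ref{sec:b-of-relations}. All the (a) parts reduce to elementary arithmetic. For (1a) and (3a) the equivalence $\frac{1}{x(n)+1}\geq\varepsilon\iff x(n)\leq\frac{1}{\varepsilon}-1$ identifies $\{n:f_n(x)\geq\varepsilon\}$ with a finite union of level sets $x^{-1}[\{m\}]$, so $f_n(x)\xrightarrow{\I}0$ is exactly $x\in\cD_\I$. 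For (2a) the direction ``$\Leftarrow$'' is witnessed by $\varepsilon_n=\frac{1}{n+1}$, which forces $\{n:x(n)\geq\varepsilon_n\}=x^{-1}[\{1\}]\in\J$, while ``$\Rightarrow$'' uses that $\varepsilon_n\xrightarrow{\J}0$ entails $\{n:\varepsilon_n\geq 1\}\in\J$, so off this $\J$-small set $x(n)=1$ forces $x(n)>\varepsilon_n$ and hence $x^{-1}[\{1\}]\in\J$.

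The main technical step is (1b); the other two (b) parts will be easier variants of the same idea. For ``$\Leftarrow$'', given $y\in\cD_\K$ with $x\succeq y$ for every $x\in X$, I would unpack $x\succeq y$ to obtain for each $x$ some $k_x$ with $x^{-1}[[0,m]]\subseteq y^{-1}[[0,m]]$ whenever $m\geq k_x$; setting $X_k:=\{x\in X:k_x\leq k\}$ yields a countable decomposition of $X$ for which $\{n:\exists x\in X_k\,(f_n(x)\geq\varepsilon)\}\subseteq y^{-1}[[0,M]]\in\K$ as soon as $M\geq k$ (the finitely many smaller $M$ being absorbed by monotonicity), proving $\K$-uniform convergence on each $X_k$. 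For the harder direction ``$\Rightarrow$'', I would write $X=\bigcup_k X_k$ with $f_n\restriction X_k\xrightarrow{\text{$\K$-u}}0$ and put $B^k_m:=\bigcup_{x\in X_k}x^{-1}[[0,m]]\in\K$. The crucial construction is the diagonal $C_m:=\bigcup_{k\leq m}B^k_m\in\K$, which is $\subseteq$-increasing in $m$, together with $y(n):=\min\{m:n\in C_m\}$ (well-defined for every $n$ as soon as $X\neq\emptyset$, since any fixed $x_0\in X_{k_0}$ puts $n$ into $C_m$ for all $m\geq\max(k_0,x_0(n))$; the case $X=\emptyset$ is trivial). Then $y^{-1}[[0,m]]=C_m$, so $y\in\cD_\K$, and for every $x\in X_k$ and $m\geq k$ one has $x^{-1}[[0,m]]\subseteq B^k_m\subseteq C_m=y^{-1}[[0,m]]$, which is precisely $x\succeq y$. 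The main obstacle here is that $y$ must simultaneously lie in $\cD_\K$ and dominate every $X_k$; the diagonalization $C_m=\bigcup_{k\leq m}B^k_m$ is what makes both requirements compatible.

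Part (2b) is the cleanest analogue: from $X=\bigcup_k X_k$ one defines $y_k:=\chf_{A_k}$ with $A_k:=\bigcup_{x\in X_k}x^{-1}[\{1\}]\in\K$ (reading $\K$-uniform convergence at $\varepsilon=\tfrac{1}{2}$), so that $x(n)\leq y_k(n)$ pointwise for every $x\in X_k$; the converse partitions $X$ by $X_k:=\{x\in X:x\leq y_k\}$ and reads off $\K$-uniform convergence from $\bigcup_{x\in X_k}x^{-1}[\{1\}]\subseteq y_k^{-1}[\{1\}]\in\K$. For (3b), a $\J$-qn witness $(\varepsilon_n)$ is converted into $y\in\cD_\J$ via $y(n):=\lfloor 1/\varepsilon_n\rfloor$ (checking $y^{-1}[\{m\}]\subseteq\{n:\varepsilon_n>1/(m+1)\}\in\J$ and $\{n:x(n)<y(n)\}\subseteq\{n:h_n(x)\geq\varepsilon_n\}\in\J$), and conversely a bounding $y$ yields $\varepsilon_n:=1/y(n)$, with $\varepsilon_n:=1$ on the $\J$-small set $y^{-1}[\{0\}]$. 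The subtlety I expect to be the main obstacle here is a boundary issue: the more natural choice $\varepsilon_n:=1/(y(n)+1)$ would give $h_n(x)\geq\varepsilon_n$ whenever $x(n)\leq y(n)$ rather than $x(n)<y(n)$, and this larger set need not belong to $\J$; the shift to $\varepsilon_n=1/y(n)$ avoids this and secures $\{n:h_n(x)\geq\varepsilon_n\}\subseteq\{n:x(n)<y(n)\}\cup y^{-1}[\{0\}]\in\J$.
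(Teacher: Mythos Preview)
Your proposal is correct and follows essentially the same route as the paper: the same explicit functions $f_n,g_n,h_n$, the same identification of $\{n:f_n(x)\ge\varepsilon\}$ with initial level sets $x^{-1}[[0,m]]$, and the same diagonalisation $C_m=\bigcup_{k\le m}B^k_m$ in (1b) (the paper phrases the ``$\Rightarrow$'' direction of (1b) as a proof by contradiction rather than a direct construction of the bound, but the underlying construction is identical). Parts (2a), (2b), (3a) and (3b,$\Rightarrow$) match the paper almost verbatim.

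The one place where you genuinely diverge is (3b,$\Leftarrow$), and your caution there is well placed. The paper \emph{does} take the ``natural'' choice $\varepsilon_n=1/(y(n)+1)$ and then bounds
\[
\{n:|h_n(x)|\ge\varepsilon_n\}\subseteq\{n:x(n)<y(n)\}\cup\{n:y(n)\le 1/\varepsilon_n-1\},
\]
claiming the second set lies in $\J$ since $y\in\cD_\J$. But with that choice one has $1/\varepsilon_n-1=y(n)$, so the second set is all of $\omega$ and the estimate is vacuous; the paper's argument as written does not close. Your shift to $\varepsilon_n=1/y(n)$ (with $\varepsilon_n=1$ on the $\J$-small set $y^{-1}[\{0\}]$) repairs this cleanly: for $y(n)\ge1$, $h_n(x)\ge\varepsilon_n$ forces $x(n)+1\le y(n)$, i.e.\ $x(n)<y(n)$, and the containment $\{n:h_n(x)\ge\varepsilon_n\}\subseteq\{n:x(n)<y(n)\}\cup y^{-1}[\{0\}]\in\J$ goes through. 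So in this one step your version is not merely different but actually closes a gap in the paper's proof.
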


\begin{proof}
(\ref{lem:subset-of-R-not-distinguishing-convergence:pointwise-vs-sigma-uniform:pointwise}) 
If $x\in\cD_\I$ and $\varepsilon>0$ then find $k\in\omega$ such that $\varepsilon\geq\frac{1}{k+1}$ and observe that: 
$$\left\{n\in\omega: f_n(x)\geq\varepsilon\right\}\subseteq\left\{n\in\omega: \frac{1}{x(n)+1}\geq\frac{1}{k+1}\right\}=x^{-1}[[0,k]]\in\I.$$ 
On the other hand, if $x\notin\cD_\I$ then there is $k\in\omega$ such that $x^{-1}[\{k\}]\notin\I$. Then $\left\{n\in\omega: f_n(x)\geq\frac{1}{k+1}\right\}=x^{-1}[[0,k]]\supseteq x^{-1}[\{k\}]\notin\I$.

(\ref{lem:subset-of-R-not-distinguishing-convergence:pointwise-vs-sigma-uniform:sigma-uniform})
If $X\subseteq\cD_\I$ is bounded in $(\succeq\cap(\cD_\I\times\cD_\K))$ by some $g\in\cD_\K$ then for each $x\in X$ denote $m_x=\max\left\{m\in\omega: \exists_{k\in\omega}\, x(k)\leq m< g(k)\right\}$ (recall that this set is finite since $x\succeq g$). Define $X_m=\{x\in X: m_x=m\}$ for each $m\in\omega$. Then $X=\bigcup_{m\in\omega}X_m$. We claim that $f_n\restriction X_m\xrightarrow{\text{$\K$-u}}0$ for each $m\in\omega$. 

Fix $m\in\omega$ and $\varepsilon>0$. Find $k\in\omega$ such that $\varepsilon\geq\frac{1}{k+1}$. Since $g\in\cD_\K$, $g^{-1}[0,\max\{m+1,k\}]\in\K$. Fix $n\in\omega\setminus g^{-1}[0,\max\{m+1,k\}]$ and $x\in X_m$. Then $g(n)>m+1$, so $x(n)\geq g(n)$ (otherwise we would have $x(n)\leq g(n)-1<g(n)$ which contradicts the choice of $m_x$ as $g(n)-1>m=m_x$). Thus, we have:
$$\varepsilon\geq\frac{1}{k+1}>\frac{1}{g(n)+1}\geq\frac{1}{x(n)+1}=f_n(x)$$
(as $g(n)>k$).

Assume now that $X\subseteq\cD_\I$ is unbounded in $(\succeq\cap(\cD_\I\times\cD_\K))$. Suppose to the contrary that $X=\bigcup_{m\in\omega} X_m$ for some sets $X_m$ such that $f_n\restriction X_m\xrightarrow{\text{$\K$-u}}0$ for each $m\in\omega$. 

Then for each $m,k\in\omega$ we can find $A^m_k\in\K$ such that $f_n(x)<\frac{1}{k+1}$ for all $n\in\omega\setminus A^m_k$ and $x\in X_m$. Define $A_k=\bigcup_{i\leq k}A^i_k$ (observe that if $n\in\omega\setminus A_k$ and $x\in \bigcup_{i\leq k}X_i$ then $f_n(x)<\frac{1}{k+1}$). Define $B_k=(A_k\cup\{k\})\setminus\bigcup_{i<k}B_i$, for all $k\in\omega$, and $g\in\cD_\K$ by:
$$g(n)=k\ \Leftrightarrow\ n\in B_k$$
($g$ is well defined as $( B_k) \in\cP_\K$).

Since $X$ is unbounded, there is $x\in X$ such that $x\not\succeq g$. Let $m\in\omega$ be such that $x\in X_m$. Then there is $m'>m$ such that $x(n)\leq m'< g(n)$ for some $n\in\omega$. Since $m'<g(n)$, $n\notin A_{m'}$, so $f_n(x)<\frac{1}{m'+1}$ (by $x\in X_m\subseteq\bigcup_{i\leq m'}X_i$). On the other hand, $f_n(x)=\frac{1}{x(n)+1}\geq\frac{1}{m'+1}$, since $x(n)\leq m'$. Thus, we obtained a contradiction, which proves that 
$f_n\restriction X\xrightarrow{\text{$\K$-$\sigma$-u}}0$ does not hold.

(\ref{lem:subset-of-R-not-distinguishing-convergence:qn-vs-sigma-uniform:qn}, $\implies$)
Let $X\subseteq 2^\omega$ be such that  
$g_n\restriction X \xrightarrow{\text{$\J$-qn}}0$.
Then there exists a $\J$-convergent to zero sequence  $(\varepsilon_n)$  of positive reals such that 
$\{n\in \omega:|g_n(x)|\geq \varepsilon_n\}\in \J$ for every $x\in X$.
Let $A=\{n\in \omega: \varepsilon_n>1/2\}$. Then $A\in \J$ and 
$
\{n\in \omega:x(n)=1\} = 
\{n\in \omega:|g_n(x)|>1/2\} \subseteq  
\{n\in \omega:|g_n(x)|\geq \varepsilon_n\} \cup A\in\J$ for every $x\in X$.
Thus, $x\in \cC_\J$ for every $x\in X$, and consequently $X\subseteq \cC_\J$.

(\ref{lem:subset-of-R-not-distinguishing-convergence:qn-vs-sigma-uniform:qn}, $\impliedby$)
Let $X\subseteq \cC_\J$.
We claim that any sequence $(\varepsilon_n)$ of positive reals which $\J$-converges to zero witnesses that  $g_n\restriction X \xrightarrow{\text{$\J$-qn}}0$.
Indeed, take any 
sequence $(\varepsilon_n)$ of positive reals which $\J$-converges to zero
and fix $x\in X$.
Then $A=\{n\in \omega:\varepsilon_n>1/2\}\in \J$ and 
$
\{n\in \omega:|g_n(x)|\geq \varepsilon_n\}
=
\{n\in \omega:x(n)\geq \varepsilon_n\}
\subseteq 
\{n\in \omega:x(n)\geq 1/2\} \cup \{n\in \omega: \varepsilon_n  >1/2\} =x^{-1}[\{1\}]\cup A\in \J
$.

(\ref{lem:subset-of-R-not-distinguishing-convergence:qn-vs-sigma-uniform:sigma-uniform}, $\implies$)
Let $X\subseteq \cC_\J$ and assume that 
$f_n\restriction X\xrightarrow{\text{$\K$-$\sigma$-u}}0$.
Then there exists a cover $\{X_k:k\in \omega\}$ of $X$ such that 
$f_n\restriction X_k\xrightarrow{\text{$\K$-u}}0$
for every $k\in \omega$.
For every $k\in \omega$, we define $A_k=\{n\in \omega:\exists x\in X_k\,(|g_n(x)|>1/2)\}$
and
$y_k=\chf_{A_k}$.
Since $A_k\in \K$ for every $k\in \omega$, we have $(y_k)\in (\cC_\K)^\omega$. 
If we show  that $x\leq^\omega (y_k)$ for every $x\in X$, the proof will be finished.
Take any $x\in X$. Then there is  $k\in \omega$ with $x\in X_k$.
If $n\in A_k$, then $x(n)\leq 1 = y_k(n)$, and if $n\in \omega\setminus A_k$, then $x(n)=g_n(x)\leq 1/2$, so $x(n)=0$ and consequently $x(n)=0\leq y_k(n)$.
All in all, $x\leq^\omega(y_k)$.

(\ref{lem:subset-of-R-not-distinguishing-convergence:qn-vs-sigma-uniform:sigma-uniform}, $\impliedby$)
Let $X\subseteq \cC_\J$ be bounded in $(\leq^\omega\cap(\cC_\J\times(\cC_\K)^\omega))$.
Then there is $(y_k)\in (\cC_\K)^\omega$ such that 
for every $x\in X$ there is $k\in \omega$ with $x(n)\leq y_k(n)$ for every $n\in \omega$.
For every $k\in \omega$, we define $X_k = \{x\in X: x(n)\leq y_k(n) \text{ for every } n\in \omega\}$.
Then $\{X_k:k\in \omega\}$ is a cover of $X$.
If we show that 
$g_n\restriction X_k \xrightarrow{\text{$\K$-u}}0$ for every $k\in \omega$, the 
proof will be finished.
Take any $k\in \omega$ and $\varepsilon>0$.
Then 
$
\{n\in \omega: \exists x\in X_k\,(|g_n(x)|\geq \varepsilon)\}
=
\{n\in \omega: \exists x\in X_k\,(x(n)\geq \varepsilon)\}
\subseteq 
\{n\in \omega: y_k(n)\geq \varepsilon)\}
\subseteq y_k^{-1}[\{1\}] 
\in \K$.

(\ref{lem:subset-of-R-not-distinguishing-convergence:pointwise-vs-qn:pointwise}) This is item (\ref{lem:subset-of-R-not-distinguishing-convergence:pointwise-vs-sigma-uniform:pointwise}) as $f_n=h_n$ for all $n\in\omega$.

(\ref{lem:subset-of-R-not-distinguishing-convergence:pointwise-vs-qn:qn}, $\implies$)
Let $X\subseteq\cD_\I$ be such that 
$h_n\restriction X\xrightarrow{\text{$\J$-qn}}0$.
Then there exists a $\J$-convergent to zero sequence $(\varepsilon_n)$ of positive reals such that 
$\{n\in\omega: |h_n(x)|\geq \varepsilon_n\}\in \J$ for every $x\in X$.
We define $y\in \omega^\omega$ by 
$y(n) = \max\{0,[1/\varepsilon_n-1]\}$ for every $n\in \omega$ (here $[r]$ means the integer part of $x$).
We claim that $y\in \cD_\J$ and $y$ is  a $\geq_\J$-bound of a set $X$.

To see that $y\in \cD_\J$, we fix $k\in \omega$ and notice 
$\{n\in \omega: y(n)\leq k\} 
=
\{n\in \omega: 1/\varepsilon_n-1<k+1\}
=
\{n\in \omega: \varepsilon_n>1/(k+2)\}\in \J
$
as $(\varepsilon_n)$ is $\J$-convergent to zero. 

To see that  $y$ is  a $\geq_\J$-bound of a set $X$, we fix $x\in X$ and notice
$
\{n\in \omega: x(n)<y(n)\} 
\subseteq  
\{n\in \omega: x(n)<1/\varepsilon_n-1\}
=
\{n\in \omega: \frac{1}{x(n)+1}>\varepsilon_n\}
=
\{n\in \omega: |h_n(x)|>\varepsilon_n\}\in \J
$
as the sequence $(\varepsilon_n)$ witnesses $h_n\restriction X\xrightarrow{\text{$\J$-qn}}0$.

(\ref{lem:subset-of-R-not-distinguishing-convergence:pointwise-vs-qn:qn}, $\impliedby$)
Let $X\subseteq\cD_\I$ be $\geq_\J$-bounded in 
$(\geq_\J\cap (\cD_\I\times \cD_\J))$.
Then there exists $y\in \cD_\J$ such that $\{n\in\omega: x(n)<y(n)\}\in\J$ for every $x\in X$.
We define a sequence $(\varepsilon_n)$ by 
$\varepsilon_n = 1/(y(n)+1)$ for every $n\in \omega$. 
We claim that $(\varepsilon_n)$   is a witness for 
$h_n\restriction X\xrightarrow{\text{$\J$-qn}}0$

To see that $(\varepsilon_n)$ is $\J$-convergent to zero, we fix $\varepsilon>0$
and notice 
$
\{n\in \omega: \varepsilon_n\geq \varepsilon\}
=
\{n\in \omega: y(n)\leq 1/\varepsilon-1\}\in \J
$
as $y\in \cD_\J$.

Now, we fix $x\in X$ and notice that 
$
\{n\in \omega: |h_n(x)|\geq \varepsilon_n\}
=
\{n\in \omega: x(n)\leq 1/\varepsilon_n-1\}
\subseteq 
\{n\in \omega: x(n)< y(n)\}
\cup
\{n\in \omega: x(n)\leq 1/\varepsilon_n-1\land x(n)\geq y(n)\}
\subseteq 
\{n\in \omega: x(n)< y(n)\}
\cup
\{n\in \omega: y(n)\leq 1/\varepsilon_n-1\}
\in \J$ as $y\in \cD_\J$.
\end{proof}

\begin{theorem}
\label{thm:subset-of-R-not-distinguishing-convergence}
Let $\I$ be an ideal on $\omega$.
\begin{enumerate}

\item 
There is   $X\subseteq \omega^\omega$ such that $|X|= \non(\text{$\I$-p,$\I$-$\sigma$-u})$
and 
$X\notin (\text{$\I$-p,$\I$-$\sigma$-u})$.\label{thm:subset-of-R-not-distinguishing-convergence:pointwise-vs-sigma-uniform}

\item 
If $\I$ is not countably generated then there is   $X\subseteq 2^\omega$ such that $|X|= \non(\text{$\I$-qn,$\I$-$\sigma$-u})$
and 
$X\notin (\text{$\I$-qn,$\I$-$\sigma$-u})$.\label{thm:subset-of-R-not-distinguishing-convergence:qn-vs-sigma-uniform}

\item 
There is  $X\subseteq \omega^\omega$ such that $|X|= \non(\text{$\I$-p,$\I$-qn})$
and 
$X\notin (\text{$\I$-p,$\I$-qn})$.\label{thm:subset-of-R-not-distinguishing-convergence:pointwise-vs-qn}

\end{enumerate}
\end{theorem}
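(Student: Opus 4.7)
The plan is to reduce each item to choosing a set unbounded in a certain binary relation and then invoke Lemma~\ref{lem:subset-of-R-not-distinguishing-convergence} to turn the unboundedness into failure of the relevant convergence on the set. In every case the domain of the relation is exactly the set of parameters for which the test sequence $\I$-pointwise (or $\I$-quasi-normally) converges to zero, so the construction automatically produces a subspace $X$ of Baire or Cantor space on which the convergence comparison fails. Normality is free, since subspaces of $\omega^\omega$ and $2^\omega$ are metrizable, so the only bookkeeping will be to check that the cardinality of $X$ matches the prescribed $\non$-number.

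Concretely, for item (\ref{thm:subset-of-R-not-distinguishing-convergence:pointwise-vs-sigma-uniform}), I would combine Corollary~\ref{cor:pointwise-versus-sigma-uniform:non}(\ref{cor:pointwise-versus-sigma-uniform:non-equals-bsigma}) with Theorem~\ref{thm:b-sigma-as-Vojtas-b}(\ref{thm:b-sigma-as-Vojtas-b:item}) to get $\non(\text{$\I$-p,$\I$-$\sigma$-u})=\bs(\I)=\bb(\succeq\cap(\cD_\I\times\cD_\I))$, pick an unbounded $X\subseteq\cD_\I$ of size $\bs(\I)$, and apply Lemma~\ref{lem:subset-of-R-not-distinguishing-convergence}(\ref{lem:subset-of-R-not-distinguishing-convergence:pointwise-vs-sigma-uniform}) to the continuous functions $f_n(x)=1/(x(n)+1)$: part (a) gives $f_n\restriction X\xrightarrow{\text{$\I$-p}}0$ and part (b) shows that $f_n\restriction X$ does not converge $\I$-$\sigma$-uniformly to zero. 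Item (\ref{thm:subset-of-R-not-distinguishing-convergence:pointwise-vs-qn}) is identical in structure: I would use Corollary~\ref{cor:pointwise-versus-quasinormal:non}(\ref{cor:pointwise-versus-quasinormal:non-equals-bs}) and Theorem~\ref{thm:bs-ideal-as-Vojtas-b}(\ref{thm:bs-ideal-as-Vojtas-b:item}) to rewrite $\non(\text{$\I$-p,$\I$-qn})$ as $\bb(\geq_\I\cap(\cD_\I\times\cD_\I))$, choose $X\subseteq\cD_\I$ unbounded in $\geq_\I$ of size $\bb_s(\I)$, and apply Lemma~\ref{lem:subset-of-R-not-distinguishing-convergence}(\ref{lem:subset-of-R-not-distinguishing-convergence:pointwise-vs-qn}) to the same continuous functions $h_n(x)=1/(x(n)+1)$.

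Item (\ref{thm:subset-of-R-not-distinguishing-convergence:qn-vs-sigma-uniform}) works the same way, with two small subtleties. The identifications $\non(\text{$\I$-qn,$\I$-$\sigma$-u})=\add_\omega(\I)=\bb(\leq^\omega\cap(\cC_\I\times(\cC_\I)^\omega))$ come from Corollary~\ref{cor:quasinormal-versus-sigma-uniform:non}(\ref{cor:quasinormal-versus-sigma-uniform:non-equals-add-omega}) and Theorem~\ref{thm:add-omega-as-Vojtas-b}(\ref{thm:add-omega-as-Vojtas-b:item}). The assumption that $\I$ is not countably generated is used precisely to ensure $\add_\omega(\I)<\infty$ (Proposition~\ref{prop:bounds-for-add-omega}(\ref{prop:bounds-for-add-omega:item:add-omega-finite})), so that an unbounded set of the required size actually exists; without it the bounding number would be $\infty$ and there would be nothing to construct. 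Take such an $X\subseteq\cC_\I\subseteq 2^\omega$ and apply Lemma~\ref{lem:subset-of-R-not-distinguishing-convergence}(\ref{lem:subset-of-R-not-distinguishing-convergence:qn-vs-sigma-uniform}) to the continuous coordinate projections $g_n(x)=x(n)$; parts (a) and (b) of that lemma then give $\I$-quasi-normal convergence on $X$ and failure of $\I$-$\sigma$-uniform convergence on $X$.

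The main conceptual obstacle — translating an abstract combinatorial bounding number into explicit sequences of real-valued continuous test functions on $\omega^\omega$ or $2^\omega$ — has already been settled by Theorem~\ref{thm:b-sigma-as-Vojtas-b} together with Lemma~\ref{lem:subset-of-R-not-distinguishing-convergence}, so the remaining work amounts to selecting the correct relation for each of the three convergence pairs and verifying the cardinality of the witnessing $X$. The one nontrivial point in the argument is the role of the hypothesis in item (\ref{thm:subset-of-R-not-distinguishing-convergence:qn-vs-sigma-uniform}): one must observe that the minimal unbounded set of size $\bb(R)$ fails to exist exactly when $\bb(R)=\infty$, which for the relation describing $\add_\omega(\I)$ corresponds precisely to $\I$ being countably generated.
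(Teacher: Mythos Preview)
Your proposal is correct and follows essentially the same approach as the paper's own proof: in each case one identifies the relevant $\non$-number with a Vojt\'a\v{s}-type bounding number via Corollary~\ref{cor:pointwise-versus-sigma-uniform:non} and Theorem~\ref{thm:b-sigma-as-Vojtas-b}, picks a minimal unbounded set $X$ in the appropriate domain, and then invokes Lemma~\ref{lem:subset-of-R-not-distinguishing-convergence} to witness the failure of the convergence implication on $X$. The only minor omission is that for items~(\ref{thm:subset-of-R-not-distinguishing-convergence:pointwise-vs-sigma-uniform}) and~(\ref{thm:subset-of-R-not-distinguishing-convergence:pointwise-vs-qn}) you should also remark that $\bs(\I)\leq\bnumber<\infty$ and $\bb_s(\I)\leq\continuum<\infty$ (Proposition~\ref{prop:bounds-for-bsigma}) so that an unbounded set of the required size exists, just as you did for item~(\ref{thm:subset-of-R-not-distinguishing-convergence:qn-vs-sigma-uniform}).
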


\begin{proof}
(\ref{thm:subset-of-R-not-distinguishing-convergence:pointwise-vs-sigma-uniform})
Since $\bs(\I)=\bb(\succeq\cap(\cD_\I\times\cD_\I))<\infty$
(by Theorem~\ref{thm:b-sigma-as-Vojtas-b}(\ref{thm:b-sigma-as-Vojtas-b:item}) and Proposition~\ref{prop:bounds-for-bsigma}(\ref{prop:bounds-for-bsigma:item:leq-b})),
there is a set $X\subseteq \cD_\I$ 
which is  unbounded in 
$\succeq\cap(\cD_\I\times\cD_\I)$ and $|X|= \bs(\I)$.
By Corollary~\ref{cor:pointwise-versus-sigma-uniform:non}(\ref{cor:pointwise-versus-sigma-uniform:non-equals-bsigma}), 
$|X|=\non(\text{$\I$-p,$\I$-$\sigma$-u})$
and
by Lemma~\ref{lem:subset-of-R-not-distinguishing-convergence}(\ref{lem:subset-of-R-not-distinguishing-convergence:pointwise-vs-sigma-uniform}) we obtain 
$X\notin (\text{$\I$-p,$\I$-$\sigma$-u})$.

(\ref{thm:subset-of-R-not-distinguishing-convergence:qn-vs-sigma-uniform})
Since 
$\add_\omega(\I)=\bnumber(\leq^\omega\cap(\cC_\I\times (\cC_\J)^\omega))<\infty$
(by Theorem~\ref{thm:add-omega-as-Vojtas-b}(\ref{thm:add-omega-as-Vojtas-b:item}) and Proposition~\ref{prop:bounds-for-add-omega}(\ref{prop:bounds-for-add-omega:item:add-omega-finite})),
there is a set 
$X\subseteq \cC_\I$ 
which is  unbounded in 
$(\leq^\omega\cap(\cC_\I\times (\cC_\J)^\omega))$ and 
$|X|= \add_\omega(\I)$.
By Corollary~\ref{cor:quasinormal-versus-sigma-uniform:non}(\ref{cor:quasinormal-versus-sigma-uniform:non-equals-add-omega}), 
$|X|=\non(\text{$\I$-qn,$\I$-$\sigma$-u})$
and
by Lemma~\ref{lem:subset-of-R-not-distinguishing-convergence}(\ref{lem:subset-of-R-not-distinguishing-convergence:qn-vs-sigma-uniform}) we obtain 
$X\notin (\text{$\I$-qn,$\I$-$\sigma$-u})$.

(\ref{thm:subset-of-R-not-distinguishing-convergence:pointwise-vs-qn})
Since 
$\bnumber_s(\I)=\bb(\succeq\cap(\cD_\I\times\cD_\I))<\infty$
(by Theorem~\ref{thm:bs-ideal-as-Vojtas-b}(\ref{thm:bs-ideal-as-Vojtas-b:item}) and Proposition~\ref{prop:bounds-for-bs}(\ref{prop:bounds-for-bs:item:bs-is-leq-continuum})),
there is a set $X\subseteq \cD_\I$ 
which is  unbounded in 
$\geq_\J\cap(\cD_\I\times\cD_\J)$ and $|X|= \bnumber_s(\I)$.
By Corollary~\ref{cor:pointwise-versus-quasinormal:non}(\ref{cor:pointwise-versus-quasinormal:non-equals-bs}), 
$|X|=\non(\text{$\I$-p,$\I$-qn})$
and
by Lemma~\ref{lem:subset-of-R-not-distinguishing-convergence}(\ref{lem:subset-of-R-not-distinguishing-convergence:pointwise-vs-qn}) we obtain 
$X\notin (\text{$\I$-p,$\I$-qn})$.
\end{proof}

\begin{remark}
    Since  $\omega^\omega$ is homeomorphic with $\R\setminus\Q$ 
and $2^\omega$ is homeomorphic with the Cantor ternary subset of $\R$
(see e.g.~\cite{MR1321597}), we can write ``$X\subseteq\R$'' instead of ``$X\subseteq\omega^\omega$'' and ``$X\subseteq 2^\omega$'' in Theorem~\ref{thm:subset-of-R-not-distinguishing-convergence}.
\end{remark}

\begin{remark}
We know that  $\non(\text{$\I$-p,$\I$-$\sigma$-u}) = \bs(\I)\leq \bnumber$ (by Corollary~\ref{cor:pointwise-versus-sigma-uniform:non} and Proposition~\ref{prop:bounds-for-bsigma}(\ref{prop:bounds-for-bsigma:item:leq-b})) and it is known that $\bnumber<\continuum$ is consistent (see e.g.~\cite{MR2768685}). Consequently, a subset of the reals which distinguishes the considered convergences and  constructed in the proof of Theorem~\ref{thm:subset-of-R-not-distinguishing-convergence} can have the cardinality strictly less than the cardinality of the continuum. On the other hand, the whole set $\cD_\I$ is a subset of reals of cardinality continuum which distinguishes between $\I$-pointwise and $\I$-$\sigma$-uniform convergences (by Lemma~\ref{lem:subset-of-R-not-distinguishing-convergence}(\ref{lem:subset-of-R-not-distinguishing-convergence:pointwise-vs-sigma-uniform}) as $\cD_\I$ is unbounded in $\succeq\cap(\cD_\I\times\cD_\I)$). Similar reasoning can be performed in the case of the classes ($\I$-qn,$\I$-$\sigma$-u) (provided that $\I$ is not countably generated) and ($\I$-p,$\I$-qn).
\end{remark}


\section{Distinguishing between spaces not distinguishing convergences}
\label{sec:Distinguishing-between-spaces}

If $\bs(\J)<\bs(\I)$, then using Corollary~\ref{cor:pointwise-versus-sigma-uniform:non}(\ref{cor:pointwise-versus-sigma-uniform:non-equals-bsigma}) we see that there exists  a space
$X\in (\text{$\I$-p,$\I$-$\sigma$-u})$ such that $X\notin (\text{$\J$-p,$\J$-$\sigma$-u})$, and using Theorem~\ref{thm:subset-of-R-not-distinguishing-convergence}(\ref{thm:subset-of-R-not-distinguishing-convergence:pointwise-vs-sigma-uniform}), one can even find $X\subseteq \R$ with the above property (and similarly for other types of considered convergences). As an application of this method we have: 
\begin{proposition}\ 
\label{prop:distinguishing-between-spaces-not-distinguishing-convergence}

\begin{enumerate}
\item 
The following statments are consistent with ZFC.
\begin{enumerate}

    \item There is $X\subseteq\R$ such that  
$X\in (\text{$\fin$-p,$\fin$-$\sigma$-u})$ and $X\notin (\text{$\I_d$-p,$\I_d$-$\sigma$-u})$.\label{prop:distinguishing-between-spaces-not-distinguishing-convergence:item:p-vs-sigma-u}

    \item There is $X\subseteq\R$ such that  
$X\in (\text{$\fin$-p,$\fin$-qn})$ and $X\notin (\text{$\cS$-p,$\cS$-qn})$.\label{prop:distinguishing-between-spaces-not-distinguishing-convergence:item:p-vs-qn}

\end{enumerate}    

\item There is $X\subseteq\R$ such that  
$X\in (\text{$\fin$-qn,$\fin$-$\sigma$-u})$ and $X\notin (\text{$\I_d$-qn,$\I_d$-$\sigma$-u})$.\label{prop:distinguishing-between-spaces-not-distinguishing-convergence:item:qn-vs-sigma-u}

\end{enumerate}    
\end{proposition}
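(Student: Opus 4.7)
The plan is to apply the general recipe described immediately before the proposition: whenever one strictly separates the relevant cardinal invariants for two ideals $\I$ and $\J$ (i.e.\ $\bs(\J)<\bs(\I)$, or the analogous inequality for $\bb_s$ or $\add_\omega$), Corollaries~\ref{cor:pointwise-versus-sigma-uniform:non}, \ref{cor:pointwise-versus-quasinormal:non}, \ref{cor:quasinormal-versus-sigma-uniform:non} together with Theorem~\ref{thm:subset-of-R-not-distinguishing-convergence} provide a subspace of $\R$ of cardinality equal to the smaller invariant which distinguishes the $\J$-convergences but remains strictly below the threshold forcing the $\I$-convergences to agree. All cardinal computations required for the particular ideals $\fin$, $\I_d$, $\cS$ appearing in the statement are read off from Theorem~\ref{thm:value-of-bsigma-bs-add-omega-for-known-ideals}.

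For (\ref{prop:distinguishing-between-spaces-not-distinguishing-convergence:item:p-vs-sigma-u}) I would work under the consistent assumption $\add(\cN)<\bnumber$ (see \cite{MR2768685}). Theorem~\ref{thm:value-of-bsigma-bs-add-omega-for-known-ideals} then gives $\bs(\I_d)=\add(\cN)<\bnumber=\bs(\fin)$, so Theorem~\ref{thm:subset-of-R-not-distinguishing-convergence}(\ref{thm:subset-of-R-not-distinguishing-convergence:pointwise-vs-sigma-uniform}) produces $X\subseteq\R$ with $|X|=\add(\cN)$ and $X\notin(\text{$\I_d$-p,$\I_d$-$\sigma$-u})$. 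Since $|X|<\bs(\fin)$, Theorem~\ref{thm:pointwise-implies-sigma-uniform:necessary-condition}(\ref{thm:pointwise-implies-sigma-uniform:necessary-condition:item}) combined with Proposition~\ref{prop:easy-implications-one-ideal} yields $X\in(\text{$\fin$-p,$\fin$-$\sigma$-u})$. Item~(\ref{prop:distinguishing-between-spaces-not-distinguishing-convergence:item:p-vs-qn}) is parallel under the consistent assumption $\omega_1<\bnumber$: use $\bb_s(\cS)=\omega_1<\bnumber=\bb_s(\fin)$ and Theorem~\ref{thm:subset-of-R-not-distinguishing-convergence}(\ref{thm:subset-of-R-not-distinguishing-convergence:pointwise-vs-qn}) in place of their $\sigma$-uniform counterparts, with Theorem~\ref{thm:pointwise-implies-quasinormal:necessary-condition}(\ref{thm:pointwise-implies-quasinormal:necessary-condition:item}) supplying the inclusion on the $\fin$-side.

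Item~(\ref{prop:distinguishing-between-spaces-not-distinguishing-convergence:item:qn-vs-sigma-u}) is unconditional for a soft reason: the classical Bukovsk\'y--Rec\l{}aw--Repick\'y equivalence of quasi-normal and $\sigma$-uniform convergence, recalled in the introduction, implies that every normal space automatically belongs to $(\text{$\fin$-qn,$\fin$-$\sigma$-u})$. So it suffices to produce $X\subseteq\R$ outside $(\text{$\I_d$-qn,$\I_d$-$\sigma$-u})$. Since $\I_d$ is not countably generated and Theorem~\ref{thm:value-of-bsigma-bs-add-omega-for-known-ideals}(\ref{thm:value-of-bsigma-bs-add-omega-for-known-ideals:item:density}) gives $\add_\omega(\I_d)=\add(\cN)\leq\bnumber\leq\continuum<\infty$, Theorem~\ref{thm:subset-of-R-not-distinguishing-convergence}(\ref{thm:subset-of-R-not-distinguishing-convergence:qn-vs-sigma-uniform}) delivers such an $X$. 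There is no genuine obstacle: the real work has been absorbed into the cardinal computations of Theorem~\ref{thm:value-of-bsigma-bs-add-omega-for-known-ideals} and into the subset-of-the-reals realization given by Theorem~\ref{thm:subset-of-R-not-distinguishing-convergence} (together with the remark following it identifying $\omega^\omega$ and $2^\omega$ with subspaces of $\R$); the proof of the proposition itself is pure bookkeeping.
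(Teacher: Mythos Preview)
Your proposal is correct and follows essentially the same approach as the paper: you invoke the cardinal computations from Theorem~\ref{thm:value-of-bsigma-bs-add-omega-for-known-ideals} together with Theorem~\ref{thm:subset-of-R-not-distinguishing-convergence} and the cardinality thresholds of Theorem~\ref{thm:pointwise-implies-sigma-uniform:necessary-condition}, exactly as the paper does (though you spell out the bookkeeping more explicitly). The only cosmetic difference is in item~(\ref{prop:distinguishing-between-spaces-not-distinguishing-convergence:item:qn-vs-sigma-u}), where you cite the classical equivalence of $\fin$-quasi-normal and $\fin$-$\sigma$-uniform convergence directly, while the paper phrases the same fact as $\add_\omega(\fin)=\infty$; these are two ways of saying the same thing.
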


\begin{proof}
(\ref{prop:distinguishing-between-spaces-not-distinguishing-convergence:item:p-vs-sigma-u})
By Theorem~\ref{thm:value-of-bsigma-bs-add-omega-for-known-ideals}, 
we have $\bs(\fin)=\bnumber$ and $\bs(\I_d)=\add(\cN)$
and it is known (see e.g.~\cite{MR2768685}) that 
$\add(\cN)<\bb$ is consistent with ZFC.

(\ref{prop:distinguishing-between-spaces-not-distinguishing-convergence:item:p-vs-qn})
By Theorem~\ref{thm:value-of-bsigma-bs-add-omega-for-known-ideals}, 
we have $\bs(\fin)=\bnumber$ and $\bs(\cS)=\omega_1$
and it is known (see e.g.~\cite{MR2768685}) that 
$\omega_1<\bb$ is consistent with ZFC.

(\ref{prop:distinguishing-between-spaces-not-distinguishing-convergence:item:qn-vs-sigma-u})
By Theorem~\ref{thm:value-of-bsigma-bs-add-omega-for-known-ideals}, 
we have $\add_\omega(\fin)=\infty> \add(\cN) = \add_\omega(\I_d)$.
\end{proof}

However, if $\bs(\J)=\bb$ (so it has  the largest possible value, as shown in Proposition~\ref{prop:bounds-for-bsigma}(\ref{prop:bounds-for-bsigma:item:leq-b})), then the above described method is useless
for distinguishing between spaces not distinguishing considered convergences. In particular, this is the case for $\J=\fin$ (by Proposition~\ref{prop:bounds-for-bsigma}(\ref{prop:bounds-for-bsigma:item:FIN})).

\begin{question}
Do there exist a space $X$ and an ideal $\I$ such that 
$X\in (\text{$\I$-p,$\I$-$\sigma$-u})$ but  $X\notin (\text{$\fin$-p,$\fin$-$\sigma$-u})$?\end{question}


\bibliographystyle{amsplain}
\bibliography{paper}

\end{document}